\numberwithin{equation}{section}
\theoremstyle{definition}
\newtheorem{theorem}{Theorem}[section]
\newtheorem{prop}[theorem]{Proposition}
\newtheorem{lemma}[theorem]{Lemma}
\theoremstyle{remark}
\newtheorem{remark}[theorem]{Remark}
\newtheorem*{rem*}{Remark}
\newcommand{\cmtr}[1]{{\color{red}  #1 }}
\newcommand\ga{\gamma}
\newcommand\ve{\varepsilon}
\newcommand\De{\Delta}
\newcommand{\R}{\mathbb{R}}
\newcommand{\Z}{\mathbb{Z}}
\newcommand{\pa}{\partial}
\newcommand{\na}{\nabla}
\newcommand{\T} { {\mathbb{T}^2} }
\def\T{\mathbb{T}^2}
\def\Z{\mathbb{Z}}
\def\R{\mathbb{R}}
\renewcommand\subsection{\@startsection{subsection}{2}%
  \z@{-.5\linespacing\@plus-.7\linespacing}{.5\linespacing}%
  {\normalfont\bfseries}}
\renewcommand\subsubsection{\@startsection{subsubsection}{3}%
  \z@{.5\linespacing\@plus.7\linespacing}{.5\linespacing}%
  {\normalfont\bfseries}}
\begin{document}



\title[EI schemes for the 2D CH equation]{Unconditionally stable exponential integrator schemes for the 2D Cahn-Hilliard equation}

\author[X. Cheng]{Xinyu Cheng}
 \address{X. Cheng, Research Institute of Intelligent Complex Systems, Fudan University, Shanghai, P.R. China}
\email{xycheng@fudan.edu.cn}

\begin{abstract}                
Phase field models are gradient flows with their energy naturally dissipating in time. In order to preserve this property, many numerical
schemes have been well-studied. In this paper we consider a well-known method, namely the
exponential integrator method (EI). In the literature a few works studied several EI schemes for various phase field models and proved the energy dissipation by either requiring a strong Lipschitz condition on the nonlinear source term or certain $L^\infty$ bounds on the numerical solutions (maximum principle). However for phase field models such as the (non-local) Cahn-Hilliard equation, the maximum principle no longer exists. As a result, solving such models via EI schemes remains open for a long time. In this paper we aim to give a systematic approach on applying EI-type schemes to such models by solving the Cahn-Hilliard equation with a first order EI scheme and showing the energy dissipation. In fact second order EI schemes can be handled similarly and we leave the discussion in a subsequent paper. To our best knowledge, this is the
first work to handle phase field models without assuming any strong Lipschitz condition or $L^\infty$ boundedness. Furthermore, we will analyze the $L^2$ error and present some numerical simulations to demonstrate the dynamics.
 
\end{abstract}

\maketitle

\section{Introduction}

 \subsection{Introduction to the models and historical review} 
 Many mathematical models
of physical, biological phenomena can be decribed via partial differential equations (PDEs). Among PDEs, phase field equations are models of essential importance in the area of material sciences. 
In this work we consider two classic phase field models: Allen-Cahn (\ref{AC}) and Cahn-Hilliard (\ref{CH}) equations. The \eqref{AC} model was developed in \cite{OG_AC} by Allen and Cahn to study the competition of crystal grain orientations in an annealing process separation of different metals in a binary alloy; while the \eqref{CH} was introduced in \cite{OG} by Cahn and Hilliard to describe the process of phase separation of different metals in a binary alloy. These equations are presented as: 
\begin{equation} \label{AC}
\left\{
\begin{aligned}
&\partial_t u=\nu \Delta u-f(u),\ \ \  (x,t)\in\Omega\times(0,\infty),\\
&u(x,0)=u_0 ,
\end{aligned}
\right.\ \tag{AC}
\end{equation}
and 
\begin{equation} \label{CH}
\left\{
\begin{aligned}
&\partial_t u=\Delta(-\nu \Delta u+f(u)),\ \ \  (x,t)\in\Omega\times(0,\infty),\\
&u(x,0)=u_0 ,
\end{aligned}
\right.\ \tag{CH}
\end{equation}
where $u(x,t)$ is a real valued function and values of $u$ in $(-1,1)$ represent a mixture of the two phases, with $-1$ representing the pure state of one phase and $+1$ representing the pure state of the other phase. Vector position $x$ is in the spatial domain $\Omega$, which is oftentimes taken to be two or three dimensional periodic domain and $t$ is the time variable. Here $\nu$ is a small parameter, occasionally we denote $\varepsilon=\sqrt{\nu}$ to represent an average distance over which phases mix. The energy term $f(u)$ is often chosen to be
$$f(u)=F'(u)=u^3-u\ ,\ F(u)=\frac14(u^2-1)^2.$$ It is well known that, as $\varepsilon\to 0$, the limiting  problem of \eqref{AC} is driven under a (mean) curvature flow while the limiting problem of \eqref{CH} follows from the Mullins-Sekerka flow; we refer to \cite{AC_MCF} for AC and \cite{Pego}, \cite{ABC} for CH and a recent work for matrix-valued AC \cite{FLWZ23}. Both asymptotic and rigorous analysis are well-studided. This current work will present an idea about how to approach these limiting behaviors in an appropriate way numerically. 

To be more specific, throughout this paper we choose the spatial domain $\Omega$ to be the two dimensional $2\pi$-periodic torus $\T=(\R/2\pi\Z)^2$. In fact, our analysis can be applied to more general settings such as Dirichlet and Neumann boundary conditions in a bounded domain. However, considering the periodic domain allows us to use the efficient and accurate Fourier-spectral numerical algorithms. Beyond this, periodic domain is often appropriate for practical questions, which involve the formation of micro-structure away from physical boundaries. As is well-known that both the AC and CH behave as gradient flows, therefore there energy dissipates in time. Here the associated energy functional is given by 
\begin{align}
    E(u)=\int_\Omega\left(\frac12\nu|\nabla u|^2+F(u)\right)\ dx.\label{energy}
\end{align} Assume that $u(x,t)$ is a smooth solution, it is clear that the energy decays in time: $\frac{d}{dt}E(u(t))\leq0$. Such property thus provides an $a\ priori$ $H^1$-norm bound and since the scaling-critical space for \eqref{CH} is $L^2$ in 2D (and $H^{\frac12}$ in 3D), the global well-posedness follow from standard energy estimates. Therefore from the analysis point of view, the energy dissipation property is an important index for whether a numerical scheme is ``stable'' or not.

There are various approaches existing in the literature that are developed to study numerical behaviors of the Cahn-Hilliard and other related phase field models, cf. \cite{Brian,Keith,Bench19,FengProhl,ShenChen,Bertozzi,E98,BLW22,bL22}. Among which different time stepping approaches are applied including the fully explicit (forward Euler) scheme, fully implicit (backward Euler) scheme, finite element scheme and convex splitting scheme; moreover various schemes are adopted for the spatial discretization including the Fourier-spectral method and finite difference schemes. To guarantee the accuracy and stability, numerical approximations usually need to obey certain qualitative behaviors and features. In this paper we focus on one of the key features, namely the energy dissipation or conservation as mentioned earlier. 
 
Based on the discussion above, we briefly go through the related results on the energy stability in the literature. To start with, Feng and Prohl \cite{FengProhl} introduced a semi-discrete in time and fully spatially discrete finite element method for CH where they obtained an error bound of size of powers of $1/\nu$. However, explicit time-stepping schemes usually require strict time-step restrictions and do not obey energy decay in general (one can compare with Remark~\ref{rem2}). To guarantee the energy dissipation with bigger time steps, a good alternative is to use semi-implicit schemes in which the linear term is implicit (such as backward time differentiation) and the nonlinear term is treated explicitly. Having only a linear implicit at every time step has computational advantages, as suggested in \cite{ShenChen}, where a semi-implicit Fourier-spectral scheme for CH was considered. On the other hand, semi-implicit schemes can lose stability for large time steps and thus smaller time steps are needed in practice. To resolve this problem, semi-implicit methods with better stability have been introduced, cf. \cite{L21,L22,C23a}. Motivated by the work on semi-implicit schemes, the so-called ``exponential integrator schemes'' (EI schemes) start to draw attention in the study of PDEs, cf. \cite{LMS22,RS21}. In a short word, the EI schemes discretize the semi-group presentation of the solutions to nonlinear PDEs. In more details, take the Allen-Cahn equation as an example. It is well-known that the smooth solution to \eqref{AC}  is given by the semi-group mild form (or Duhamel's Formula):
\begin{equation}
    u(t)=e^{\nu t\De}u(t_0)+\int_{t_0}^t e^{\nu (t-s)\De}f(u(s))\ ds,
\end{equation}
where $e^{t\De}$ is the standard heat semi-group (heat kernel). Then the idea of the classic EI is to discretize the semi-group presentation in each time interval $(t_n, t_{n+1})$ by approximating the integrand function by a constant (cf.\cite{LMS22}):
\begin{align*}
    u(t_{n+1})=&e^{\nu \tau\De}u(t_n)+\int_{t_n}^{t_{n+1}} e^{\nu (t_{n+1}-s)\De}f(u(s))\ ds\\
    \approx&e^{\nu \tau\De}u(t_n)+\int_{t_n}^{t_{n+1}} e^{\nu (t_{n+1}-s)\De}f(u(t_n))\ ds\\
    \approx&e^{\nu \tau\De}u(t_n)+\frac{1-e^{\nu(t_{n+1}-t_n)\De}}{\nu\De}f(u(t_n)),
\end{align*}
or equivalently writing $u(t_n)=u^n, u(t_{n+1})=u^{n+1}$ and $\tau=t_{n+1}-t_n$ we have
\begin{align*}
    u^{n+1}=e^{\nu \tau\De}u^n+\frac{1-e^{\nu\tau\De}}{\nu\De}f(u^n).
\end{align*}
However, those works either require a strong Lipschitz condition on the nonlinear source term, or require certain $L^\infty$ bounds on the numerical solutions. Such requirement is satisfied by the Allen-Cahn equation by the standard maximum principle (we refer the readers to \cite{L21,C23a} for example). While for other phase field models such as the (non-local) Cahn-Hilliard equation, the maximum principle no longer exists. As a result, solving phase field models via EI schemes where strong Lipschitz conditions or maximum principle are no longer applicable remains open for a long time. One of our modest goals of this paper is to give a systematic approach on applying EI-type schemes to such models. In particular, we consider one of the classic phase models, the Cahn-Hilliard equation with a first order EI scheme. In fact second order EI schemes can be handled similarly and we leave the discussion in a forthcoming paper. 

In this work we consider the following stabilized EI scheme for \eqref{CH}:
\begin{equation}\label{1.1}
   \left\{\begin{aligned}
&u^{n+1}=e^{-\nu\De^2\tau}u^n+\frac{1-e^{-\nu\De^2\tau}}{\nu\De}\Pi_N [f(u^n)]-S\tau \De^2(u^{n+1}-u^n),
\\ & u^0=\Pi_Nu_0,
 \end{aligned}
 \right.
\end{equation}
where $\tau$ is the time step and $S>0$ is the coefficient for the $O(\tau)$ regularization term. In fact \eqref{1.1} above can be reorganized and rewritten into the following form:
\begin{equation}
\label{1stScheme}
   \left\{\begin{aligned}
&\frac{u^{n+1}-u^{n}}{\tau}+S\De^2(u^{n+1}-u^n)=\frac{e^{-\nu\De^2\tau}-1}{\tau}u^n+\frac{1-e^{-\nu\De^2\tau}}{\tau\nu\De}\Pi_N [f(u^n)],
\\ & u^0=\Pi_Nu_0.
 \end{aligned}
 \right.
\end{equation}

\begin{remark}\label{rem1}
    It is worth mentioning that from \eqref{1stScheme} one can observe that our exponential integrator scheme behaves very similarly to an explicit forward Euler method since 
 \begin{align*}
 &\frac{e^{-\nu\De^2\tau}-1}{\tau}=-\nu\De^2+\frac12 \nu^2\De^4\tau +O(\tau^2),\\
 &\frac{1-e^{-\nu\De^2\tau}}{\tau\nu\De}=\De-\frac12\nu \De^3\tau +O(\tau^2).   
 \end{align*}
Therefore informally speaking, \eqref{1stScheme} can be understood as the usual explicit forward Euler scheme for \eqref{CH} with a stabilization term:
 \begin{align}\label{1.3}
     \frac{u^{n+1}-u^{n}}{\tau}+S\De^2(u^{n+1}-u^n)=-\nu\De^2 u^n+\De\Pi_N [f(u^n)]+O(\nu\tau,\tau^2).
 \end{align}
\end{remark}

\begin{remark}
    The stabilizer $S\De^2(u^{n+1}-u^{n})$ is not unique. In fact our analysis works for many other stabilizers such as $S e^{-\Delta}(u^{n+1}-u^{n})$ and so on. Another particular choice is $\frac{\nu}{2}\De^2 (u^{n+1}-u^n)-S\De(u^{n+1}-u^n)$. Such particular choice results in the analysis of the energy dissipation. More precisely speaking, we need $\frac{\nu}{2}\De^2 (u^{n+1}-u^n)$ to balance the drawback caused by the ``explicit forward'' diffusion; the real stabilizer is indeed $-S\De(u^{n+1}-u^n)$. We choose $S\De^2(u^{n+1}-u^{n})$ just for the sake of simplicity. Even though such choices of stabilizers are not unique, they are necessary. As will be presented in Section 5 with numerical evidence supporting, we will see examples without stabilizers have growing energy. 
    \end{remark}
    
    \begin{remark}
       The exact explicit forward Euler method seems challenging to obtain the energy stability because it seems unlikely to obtain a uniform Sobolev $H^s$-bound estimate. To our best knowledge, we believe such problem arises from the essence of the forward Euler method since the regularity at step $n$ is higher than the regularity at step $n+1$ due to the ``explicit forward'' diffusion $-\De^2 u^n$. This seems to be the crucial technical difference between our exponential integrator scheme \eqref{1stScheme} and the 
    exact explicit forward Euler scheme \eqref{1.3} without the stabilizer $S\De^2(u^{n+1}-u^n)$ and higher order $O(\nu\tau,\tau^2)$ perturbation.
\end{remark}

\begin{remark}\label{rem2}
    As mentioned earlier in Remark~\ref{rem1}, our scheme \eqref{1stScheme} can be understood as an explicit forward Euler scheme. It is well known that after spatial discretization on a grid of size $h$, any explicit scheme will have a time step $\tau$ restriction of size $\tau < O(h^4)$ for the fourth order CH equation. However as will be explained later (see Theorem~\ref{1st_error}), in order to show the convergence of our numerical scheme, we only require $\tau\ll(\log(N))^{-1}$, which is a very mild restriction. Moreover the energy stability holds for any size of $\tau$, see Theorem~\ref{Thm_1stab}.
\end{remark}

\subsection{Main results}

Our main results state below.
\begin{theorem}
[Unconditional energy stability for \ref{CH}]\label{Thm_1stab} Consider \eqref{1.1} or (\ref{1stScheme}) with $\nu>0$ and assume $u_0\in H^2(\T)$. Then there exists a constant $\beta_0$ depending only on the initial energy $E_0=E(u_0)$ such that if 
\begin{equation*}
   \begin{aligned}
S\geq\beta\cdot \left(\Vert u_0\Vert^2_{H^2}+\nu^{-1}|\log \nu|+\nu^{-1}\log N+\nu   \right)
 \end{aligned} ,\quad \beta\geq\beta_0.
\end{equation*}
Then $E(u^{n+1})\leq E(u^n)$, $\forall n\geq 0$ and for any choice of the time step $\tau$, where the energy $E$ is defined in \eqref{energy}.
\end{theorem}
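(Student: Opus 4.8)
The plan is to run an induction on $n$ with hypothesis $E(u^n)\le E_0$, at each step deducing the stronger statement $E(u^{n+1})\le E(u^n)$. First I would put \eqref{1.1} in a form adapted to testing against the discrete chemical potential. Write $A=-\De$ (so $A\ge 1$ on the zero-mean modes of $\T$), $\mu^n:=\nu A u^n+f(u^n)$, $\de u:=u^{n+1}-u^n$, and let $Q_\tau$ be the Fourier multiplier with symbol $q_\tau(|k|^2)=\frac{1-e^{-\nu|k|^4\tau}}{\nu|k|^2}\ge0$, so that $\frac{1-e^{-\nu\De^2\tau}}{\nu\De}=-Q_\tau$ and $e^{-\nu\De^2\tau}=I-\nu A Q_\tau$. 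A direct manipulation of \eqref{1.1} then collapses it to $(I+S\tau\De^2)\,\de u=-Q_\tau\,\Pi_N\mu^n$; in particular $\de u$ is band-limited (hence so is $u^{n+1}$, by induction) and has zero mean, because the right-hand side does and $\overline{u^n}$ is conserved by the scheme. I would also record that, on the finite-dimensional space of trigonometric polynomials of degree $\le N$, the iterate $u^{n+1}=u^{n+1}(\tau)$ depends real-analytically on $\tau$ with $u^{n+1}(0)=u^n$; this is used only at the very end.

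\smallskip

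Second, the core energy inequality. Expanding $E(u^{n+1})-E(u^n)$ and using $\na u^{n+1}=\na u^n+\na\de u$ yields $\tfrac\nu2(\norm{\na u^{n+1}}^2-\norm{\na u^n}^2)=(\nu A u^n,\de u)+\tfrac\nu2\norm{\na\de u}^2$, while the exact Taylor remainder $F(b)-F(a)=f(a)(b-a)+\int_0^1(1-s)F''(a+s(b-a))\,ds\,(b-a)^2$ together with $F''\le 3M_\infty$ on the segment between $u^n$ and $u^{n+1}$ (where $M_\infty:=\max(\norm{u^n}_\infty^2,\norm{u^{n+1}}_\infty^2)$) gives $\int_\T(F(u^{n+1})-F(u^n))\le(\Pi_N f(u^n),\de u)+\tfrac{3M_\infty}{2}\norm{\de u}^2$. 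Adding these and using that $\de u$ is zero-mean and band-limited, the linear terms collect into $(\Pi_N\mu^n,\de u)$; inverting the scheme on the zero-mean band-limited subspace (where $Q_\tau$ is positive definite) rewrites this as $-(Q_\tau^{-1}\de u,\de u)-S\tau(Q_\tau^{-1}\De^2\de u,\de u)$. The two spectral facts I need on that subspace are $Q_\tau^{-1}\ge\nu A$ and $S\tau\,Q_\tau^{-1}\De^2\ge S\,I$, both immediate from $0<1-e^{-x}\le\min(1,x)$ and $|k|^2\ge1$; crucially the second one is where the stabilizer, and not $\tau$, does the work, which is why the result is unconditional in $\tau$. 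Altogether, $E(u^{n+1})-E(u^n)\le-\tfrac\nu2\norm{\na\de u}^2-(S-\tfrac32 M_\infty)\norm{\de u}^2\le0$ as soon as $S\ge\tfrac32 M_\infty$.

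\smallskip

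Third, I must bound $M_\infty$, and this is where the hypothesis on $S$ is spent and where the real difficulty lies. The induction hypothesis $E(u^n)\le E_0$ gives $\norm{\na u^n}_{L^2}^2\le 2E_0/\nu$ and, from $\int_\T F(u^n)\ge\tfrac18\norm{u^n}_{L^4}^4-C$, an $L^2$ bound on $u^n$ independent of $\nu$; feeding this into a logarithmic Sobolev (Brezis--Gallouet-type) inequality for functions with Fourier support in $\{|k|\le N\}$ produces a bound on $\norm{u^n}_{L^\infty}^2$ exactly of the order of the right-hand side of the hypothesis on $S$, for $n\ge1$, while for $n=0$ only Sobolev embedding is available: $\norm{u^0}_{L^\infty}^2=\norm{\Pi_N u_0}_\infty^2\lesssim\norm{u_0}_{H^2}^2$. (The absence of a maximum principle is precisely why nothing sharper holds and why the $\log N$ is forced.) This controls $\norm{u^n}_\infty$ but \emph{not} $\norm{u^{n+1}}_\infty$, so the condition $S\ge\tfrac32 M_\infty$ needed to close the step is \textbf{circular}: this is the main obstacle. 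I would break it with a continuity argument in $\tau$ at the fixed step $n$: the set $I=\{\tau\ge0:\ E(u^{n+1}(\tau))\le E(u^n)\}$ is closed, contains $0$, and is open — because at any $\tau_0\in I$ the bound $E(u^{n+1}(\tau_0))\le E_0$ forces $\norm{u^{n+1}(\tau_0)}_\infty^2\le\tfrac23 S$ (by the choice of $S$ and the same log-Sobolev estimate), so the core inequality above applies on a whole $\tau$-neighbourhood of $\tau_0$ and keeps $E(u^{n+1}(\tau))\le E(u^n)$ there — whence $I=[0,\infty)$ by connectedness. Choosing $S\ge\beta(\norm{u_0}_{H^2}^2+\nu^{-1}|\log\nu|+\nu^{-1}\log N+\nu)$ with $\beta$ large relative to the absolute constants above makes $\tfrac32 M_\infty\le\tfrac23 S$ at every step, so the induction runs and the theorem follows. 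Everything except the circularity is a finite-dimensional spectral-multiplier computation; that one point is the crux.
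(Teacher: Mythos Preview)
Your derivation of the core energy inequality is essentially the paper's, just phrased differently: the paper tests \eqref{1stScheme} against $\tau Q_\tau^{-1}\de u$ and arrives (via its Lemma~\ref{lem3.2} and Lemma~\ref{lem2.3}) at the same sufficient condition $S\gtrsim M_\infty=\max(\|u^n\|_\infty^2,\|u^{n+1}\|_\infty^2)$; your spectral bounds $Q_\tau^{-1}\ge\nu A$ and $S\tau\,Q_\tau^{-1}\De^2\ge S\,I$ on zero-mean modes are exactly the content of that lemma.

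The genuine difference is in how the circularity on $\|u^{n+1}\|_\infty$ is broken. The paper does \emph{not} argue by continuity in $\tau$; instead it proves a direct a~priori estimate (Lemma~\ref{1stLem}) by reading off the Fourier multipliers in the solved form \eqref{3.19}: one gets $\|u^{n+1}\|_{\dot H^1}\le\|u^n\|_{\dot H^1}+S^{-1}\|f(u^n)\|_2$ and $\|u^{n+1}\|_{\dot H^{3/2}}\lesssim (N/\nu)^{1/2}(E^n+1)$, and then feeds these into the log-interpolation Lemma~\ref{Loginterpolation}. This is where the $\nu^{-1}|\log\nu|$ term in the statement actually originates (from $\log\|u^{n+1}\|_{\dot H^{3/2}}$); your band-limited Brezis--Gallouet bound gives only $\|u^{n+1}\|_\infty^2\lesssim_{E_0}|\bar u_0|^2+\nu^{-1}\log N$, so your route in fact proves the theorem under a \emph{weaker} hypothesis on $S$. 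What the paper's route buys is an explicit, non-bootstrap $\dot H^1$--$\dot H^{3/2}$ control on $u^{n+1}$ in terms of $u^n$-data, which is of independent use; what your continuity argument buys is brevity and a cleaner constant.

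One small correction: your induction hypothesis ``$E(u^n)\le E_0$'' is not available at $n=0$, since in general $E(\Pi_N u_0)\neq E(u_0)$. The paper handles this by a short lemma showing $\sup_N E(\Pi_N u_0)\lesssim 1+E_0$ and then runs the induction with the hypothesis $E(u^n)\le E(u^0)$; you should do the same. With that adjustment your argument goes through.
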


\begin{remark}\label{rem1.6}
    The choice of $S$ is not optimal here; in fact we refer the readers to Section 5 for numerical evidence with much smaller $S$. Determining the optimal bound of the stabilizers can be a very interesting question, however it is not the focus of the presenting paper; we leave this question to the readers.
\end{remark}

\begin{theorem}[$L^2$ error estimate]\label{1st_error}
Fix $\nu>0$ and let $u_0\in H^s$, $s\geq 10$. Let $0<\tau\leq M$ for some $M>0$. Let $u(t)$ be the continuous solution to the 2D Cahn-Hilliard equation with initial data $u_0$. Let $u^m$, $m\geq 1$ be defined in (\ref{1stScheme}) with initial data $u^0$. Assume $S$ satisfies the same condition in Theorem \ref{Thm_1stab}. Define $t_0=0$ and $t_m=m\tau$ for $m\geq 1$. Then for any $m\geq 1$,
\begin{equation*}
\| u^m-u(t_m)\|_2\leq (1+S)\cdot C_2\cdot e^{C_1t_m}\left(N^{-s}+\tau+\tau\cdot N^{-s+4}\right),
\end{equation*}
where $C_1>0$ is a constant depending on $\nu\ ,u_0$; $C_2>0$ is a constant depending on $s\ ,\nu$ and $u_0$.
\end{theorem}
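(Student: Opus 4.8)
The plan is to run a discrete Gronwall argument on the error $e^n := u^n - u(t_m)|_{m=n} = u^n - u(t_n)$, measured in $L^2$ (or, as a technical device, in a mixture of $\dot H^{-1}$ and a weighted $\dot H^1$, the norms adapted to the gradient–flow structure of \eqref{CH}). First I would write the mild form of the continuous solution on each subinterval, $u(t_{n+1}) = e^{-\nu\tau\Delta^2}u(t_n) + \int_{t_n}^{t_{n+1}} e^{-\nu(t_{n+1}-s)\Delta^2}\Delta f(u(s))\,ds$, subtract it from \eqref{1.1}, and organize the discrepancy into: (i) the Fourier–truncation error $\tfrac{1-e^{-\nu\tau\Delta^2}}{\nu\Delta}(\Pi_N - I)f(u(t_n))$ together with the initial error $e^0 = (\Pi_N-I)u_0$; (ii) the time–quadrature error $\int_{t_n}^{t_{n+1}} e^{-\nu(t_{n+1}-s)\Delta^2}\Delta\bigl(f(u(s)) - f(u(t_n))\bigr)\,ds$; (iii) the stabilizer consistency error $S\tau\Delta^2(u(t_{n+1})-u(t_n))$; and (iv) the genuinely nonlinear term $\tfrac{1-e^{-\nu\tau\Delta^2}}{\nu\Delta}\Pi_N\bigl(f(u^n)-f(u(t_n))\bigr)$.

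For the consistency errors (i)--(iii) I would use only the smoothness of the exact solution: since $u_0\in H^s$ with $s\ge 10$, global well-posedness of the 2D Cahn--Hilliard equation gives $\sup_{t\le t_m}\|u(t)\|_{H^s}\lesssim 1$, and, because $\partial_t u = -\nu\Delta^2 u + \Delta f(u)$ and $H^\sigma$ is an algebra for $\sigma>1$, also $\sup_{t\le t_m}\|\partial_t u(t)\|_{H^{s-4}}\lesssim 1$. Combining this with the elementary multiplier bounds $\|e^{-\nu\tau\Delta^2}\|_{L^2\to L^2}\le 1$, $\bigl\|\tfrac{1-e^{-\nu\tau\Delta^2}}{\nu\Delta^2}\bigr\|_{L^2\to L^2}\le\tau$ (so $\bigl\|\tfrac{1-e^{-\nu\tau\Delta^2}}{\nu\Delta}g\bigr\|_{L^2}\le\tau\|\Delta g\|_{L^2}$) and $\|e^{-\nu\sigma\Delta^2}\Delta\|_{L^2\to L^2}\lesssim(\nu\sigma)^{-1/2}$, each of (i)--(iii) is $O\bigl(N^{-s}\bigr)$, $O(\tau^2)$, or $O\bigl((1+S)\tau^2 + \tau N^{-s+4}\bigr)$ per step; summed over the $\sim t_m/\tau$ steps these produce the $N^{-s}$, $\tau$, $\tau N^{-s+4}$ contributions and the $(1+S)$ prefactor in the statement.

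The heart of the matter is term (iv): controlling $f(u^n)-f(u(t_n)) = \bigl((u^n)^2 + u^n u(t_n) + u(t_n)^2 - 1\bigr)e^n$ without a maximum principle and without an a priori $L^\infty$ bound on $u^n$. Here I would invoke Theorem~\ref{Thm_1stab}: energy dissipation gives $\sup_n\|u^n\|_{H^1}\lesssim 1$ (with the mass conserved, so $e^n$ is mean--zero), and then a logarithmic Sobolev / Brezis--Gallouet inequality for trigonometric polynomials of degree $\le N$ on $\T$, of the type $\|v\|_{L^\infty}\lesssim\|v\|_{H^1}\sqrt{\log N}$, yields $\|u^n\|_{L^\infty}\lesssim\sqrt{\log N}$ and hence $\|f(u^n)-f(u(t_n))\|_{L^2}\lesssim(\log N)\|e^n\|_{L^2}$. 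Feeding this into an energy estimate for the error equation --- testing against $e^{n+1}$ (respectively $-\Delta^{-1}e^{n+1}$), using that $(I+S\tau\Delta^2)^{-1}(e^{-\nu\tau\Delta^2}+S\tau\Delta^2)$ is an $L^2$--contraction, moving the single $\Delta$ hidden in term (iv) onto the test function, and absorbing the resulting cross term, via a Young splitting with a $\tau$--dependent weight, into the coercive quantity $S\tau\|\Delta e^{n+1}\|_{L^2}^2$ (resp.\ $S\tau\|\nabla e^{n+1}\|_{L^2}^2$) supplied by the stabilizer --- I expect a recursion of the schematic form $\Phi_{n+1}\le(1 + C\tau)\Phi_n + (\text{per-step consistency})$, where $C$ is rendered independent of $N$ precisely by the hypothesis $S\gtrsim\nu^{-1}\log N$ together with the restriction $\tau\ll(\log N)^{-1}$, which is exactly what is needed for the $\log N$ losses to be reabsorbed. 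A discrete Gronwall inequality then gives the factor $e^{C_1 t_m}$ with $C_1$ depending only on $\nu$ and $u_0$.

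The main obstacle is this last step: the cubic nonlinearity forces one to bound $\|u^n\|_{L^\infty}$, which --- absent a maximum principle --- can only be obtained from the energy bound at the price of a $\log N$, and making the Gronwall constant $C_1$ independent of $N$ then hinges on a delicate bookkeeping between the smoothing in $\tfrac{1-e^{-\nu\tau\Delta^2}}{\nu\Delta}$, the coercivity of the stabilizer, the size of $S$, and the step-size restriction (in particular, on extracting a full power of $\tau$ rather than $\tau^{1/2}$ from the Duhamel-type term, which is what dictates the $\tau\ll(\log N)^{-1}$ condition). Everything else --- the quadrature and truncation estimates, the algebra property of $H^s$, and the discrete Gronwall --- is routine given the regularity $s\ge 10$ assumed on $u_0$.
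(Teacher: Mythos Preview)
Your decomposition into consistency pieces (i)--(iii) and the nonlinear piece (iv) is essentially the paper's, and your handling of (i)--(iii) via the regularity of the exact solution is fine. The gap is in (iv): your plan to bound $\|u^n\|_{L^\infty}\lesssim\sqrt{\log N}$ and then absorb the resulting $\log N$ into the stabilizer does not close. After your Young splitting, the cross term $\tau\,\|f(u^n)-f(u(t_n))\|_2\,\|\Delta e^{n+1}\|_2 \lesssim \tau(\log N)\|e^n\|_2\|\Delta e^{n+1}\|_2$ leaves a residual of order $\frac{\tau(\log N)^2}{S}\|e^n\|_2^2$ on the right. With $S\gtrsim\nu^{-1}\log N$ this is still $\sim \nu\tau(\log N)\|e^n\|_2^2$, so the recursion is $\Phi_{n+1}\le(1+C\tau\log N)\Phi_n+\cdots$, and iterating gives $e^{Ct_m\log N}=N^{Ct_m}$, not $e^{C_1t_m}$. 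The condition $\tau\ll(\log N)^{-1}$ does not help here: it only makes each factor $1+C\tau\log N$ close to $1$, but you iterate $t_m/\tau$ times.

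The paper sidesteps the need for $\|u^n\|_{L^\infty}$ entirely by expanding the cubic asymmetrically around the \emph{exact} solution: writing $f(u^n)-f(u(t_n)) = (a_1+a_2 u(t_n)^2)\,e^n + a_3\,u(t_n)\,(e^n)^2 + a_4\,(e^n)^3$, the only $L^\infty$ norm needed is $\|u(t_n)\|_\infty\lesssim 1$. The quadratic and cubic terms in $e^n$ are then estimated by Gagliardo--Nirenberg on $\T$, $\|e^n\|_4^4\lesssim\|e^n\|_2^2\|\nabla e^n\|_2^2$ and $\|e^n\|_6^6\lesssim\|e^n\|_2^2\|\nabla e^n\|_2^4$, and the factors $\|\nabla e^n\|_2\le\|\nabla u^n\|_2+\|\nabla u(t_n)\|_2$ are uniformly bounded by the energy stability (Theorem~\ref{Thm_1stab}) plus the regularity of $u(t)$. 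This gives $|(f(u^n)-f(u(t_n)),\Delta e^{n+1})|\le \frac{C}{\nu}\|e^n\|_2^2+\frac{3\nu}{8}\|\Delta e^{n+1}\|_2^2$ with $C$ \emph{independent of $N$}, and the discrete Gronwall closes with $C_1=C_1(\nu,u_0)$ only. A secondary technical point: the paper tests against the adapted multiplier $\frac{\nu\Delta^2\tau}{1-e^{-\nu\Delta^2\tau}}e^{n+1}$ (operators $L,M$) rather than $e^{n+1}$ or $-\Delta^{-1}e^{n+1}$, which matches the EI structure and cleanly separates the dissipative and stabilizer contributions; this is not the conceptual obstacle, but it is what makes the bookkeeping go through.
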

\begin{remark}
It is worth mentioning that the requirement $S\ge \beta \nu^{-1}\log N$ in Theorem~\ref{Thm_1stab} is troublesome in the convergence of the numerical scheme. However as stated in Theorem~\ref{1st_error} above, $S\cdot N^{-s}\to 0$ as $N\to \infty$ therefore such choice of $S$ will not ruin the convergence. On the other hand, we do need $\tau\cdot S\ll 1$, which means $\tau\ll (\log N)^{-1}$. This is still a very mild restriction as discussed in Remark~\ref{rem2} because if one uses spatial discretization, the restriction on the size of the time step $\tau$ is usually much smaller (of polynomial order of $\frac 1N$) to be numerically stable.
\end{remark}

\begin{remark}
Here we need to require that $\tau$ is not arbitrarily large. However, in practice it is not a big issue as we always use small (or at least not arbitrarily large) time steps.
\end{remark}

\subsection{Organization of the presenting paper}
The presenting paper is organized as follows. In Section 2 we list the notation and preliminaries including several useful lemmas. The energy stability of the EI scheme of the 2D Cahn-Hilliard  will be shown in Section 3 while the error estimate is given in Section 4. Numerical experiments will be presented in Section 5. Further discussion and concluding remarks will be in Section 6.



\section{Notation and preliminaries}
Throughout this paper, for any two (non-negative in particular) quantities $X$ and $Y$, we denote $X \lesssim Y$ if
$X \le C Y$ for some constant $C>0$. Similarly $X \gtrsim Y$ if $X
\ge CY$ for some $C>0$. We denote $X \sim Y$ if $X\lesssim Y$ and $Y
\lesssim X$. The dependence of the constant $C$ on
other parameters or constants are usually clear from the context and
we will often suppress  this dependence. We shall denote
$X \lesssim_{Z_1, Z_2,\cdots,Z_k} Y$
if $X \le CY$ and the constant $C$ depends on the quantities $Z_1,\cdots, Z_k$.

For any two quantities $X$ and $Y$, we shall denote $X\ll Y$ if
$X \le c Y$ for some sufficiently small constant $c$. The smallness of the constant $c$ is
usually clear from the context. The notation $X\gg Y$ is similarly defined. Note that
our use of $\ll$ and $\gg$ here is \emph{different} from the usual Vinogradov notation
in number theory or asymptotic analysis.

For a real-valued function $u:\Omega \to \R$ we denote its usual Lebesgue $L^p$-norm by
\begin{align}
    \|u\|_{p}=\|u\|_{L^p(\Omega)}=\begin{cases}
       & \left(\int_{\Omega} |u|^p\ dx\right)^{\frac{1}{p}},\quad  1\le p<\infty;\\
       & \operatorname{esssup}_{x\in\Omega}|u(x)|,\quad p=\infty.
    \end{cases}
\end{align}
Similarly, we use the weak derivative in the following sense: For  $u$, $v\in L^1_{loc}(\Omega)$, (i.e they are locally integrable); $\forall\phi\in C^{\infty}_0(\Omega)$, i.e $\phi$ is infinitely differentiable (smooth) and compactly supported; and 
$$\int_{\Omega}u(x)\ \partial^{\alpha} \phi(x)\ dx=(-1)^{\alpha_1+\cdots+\alpha_n}\int_{\Omega} v(x)\ \phi(x)\ dx ,$$
then $v$ is defined to be the weak partial derivative of $u$, denoted by $\partial^\alpha u$. 
Suppose $u\in L^p(\Omega)$ and all weak derivatives $\partial^\alpha u$ exist for $|\alpha|=\alpha_1+\cdots+\alpha_n \leq k$ , such that $\partial^\alpha u\in L^p(\Omega)$ for $|\alpha|\leq k$, then we denote $u\in W^{k,p}(\Omega)$ to be the standard Sobolev space. The corresponding norm of $W^{k,p}(\Omega)$ is :
$$\| u\|_{W^{k,p}(\Omega)}=\left(\sum_{|\alpha|\leq k}\int_{\Omega}|\partial^\alpha u|^p\ dx\right)^{\frac{1}{p}}\ .$$

\noindent  For $p=2$ case, we use the convention $H^k(\Omega)$ to denote the space $W^{k,2}(\Omega)$. We often use $D^m u$ to denote any differential operator $D^\alpha u$ for any $|\alpha|=m$: $D^2$ denotes $\partial^2_{x_ix_j}u$ for $1\leq i , j\leq d$, as an example.

In this paper we use the following convention for Fourier expansion on $\mathbb{T}^d$: 
$$f(x)=\frac{1}{(2\pi)^d}\sum_{k\in\Z^d}\widehat{f}(k)e^{ik\cdot x}\ ,\ \widehat{f}(k)=\int_{\Omega}f(x)e^{-ik\cdot x}\ dx\ .$$
Taking advantage of the Fourier expansion, we use the well-known equivalent $H^s$-norm and $\dot{H}^s$-semi-norm of function $f$ by $$\| f\|_{H^s}=\frac{1}{(2\pi)^{d/2}}\left(\sum_{k\in\Z^d}(1+|k|^{2s})|\widehat{f}(k)|^2\right)^{\frac12}\ ,\ \| f\|_{\dot{H}^s}=\frac{1}{(2\pi)^{d/2}}\left(\sum_{k\in\Z^d}|k|^{2s}|\widehat{f}(k)|^2\right)^{\frac12}. $$
In addition, for $N\geq 2$, we define
$$X_N=\text{span}\left\{\cos(k\cdot x)\ ,\sin(k\cdot x):\ \ k=(k_1,k_2)\in\Z^2\ ,\ |k|_\infty=\max\{|k_1|,|k_2|\}\leq N \right\} \ .$$


\begin{lemma}[Sobolev inequality on $\mathbb{T}^d$] \label{Sobolevineq}
Let $0<s<d$ and $f\in L^q(\mathbb{T}^d)$ for any $\frac{d}{d-s}<p<\infty$, then
\begin{align*}
\| \left<\nabla\right>^{-s}f\|_{L^p(\mathbb{T}^d)}\lesssim_{s,p,d}\| f\|_{L^q(\mathbb{T}^d)}\ ,\ \mbox{where}\ \frac{1}{q}=\frac{1}{p}+\frac{s}{d}\ ,
\end{align*}
where $\left<\nabla\right>^{-s}$ denotes $(1-\Delta)^{-\frac{s}{2}}$ and $A\lesssim_{s,p,d}B$ is defined as $A\leq C_{s,p,d}\ B$ where $C_{s,p,d}$ is a constant dependent on $s,p$ and $d$.
\end{lemma}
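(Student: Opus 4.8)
The plan is to realize $\langle\nabla\rangle^{-s}$ as convolution against its periodic Bessel kernel and then reduce the estimate to the classical Hardy--Littlewood--Sobolev inequality on $\R^d$.

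First I would write $\langle\nabla\rangle^{-s}f=\mathcal G_s * f$ on $\mathbb{T}^d$, where
\[
\mathcal G_s(x)=\frac{1}{(2\pi)^d}\sum_{k\in\Z^d}(1+|k|^2)^{-s/2}e^{ik\cdot x}.
\]
By Poisson summation, $\mathcal G_s$ is (a fixed constant multiple of) the periodization of the Bessel potential $G_s$ on $\R^d$, whose Fourier multiplier is $(1+|\xi|^2)^{-s/2}$; recall $0<G_s(y)\lesssim_{s,d}|y|^{s-d}$ near the origin and $G_s$ decays exponentially at infinity. The structural fact I need is therefore the pointwise bound $|\mathcal G_s(x)|\lesssim_{s,d}|x|^{s-d}$ for $x\in\mathbb{T}^d\setminus\{0\}$ (identifying $\mathbb{T}^d$ with $[-\pi,\pi)^d$ and using $|x|\lesssim 1$), together with smoothness of $\mathcal G_s$ away from the origin; in particular $\mathcal G_s\in L^{d/(d-s),\infty}(\mathbb{T}^d)$. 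One can establish this either from the periodization picture above (the singular term is $G_s(x)$, the remaining terms sum to a bounded function and, since $|x|^{s-d}\gtrsim 1$ on the torus, they are absorbed), or directly by splitting the Fourier sum at $|k|\sim|x|^{-1}$ and using summation by parts on the high-frequency tail.

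Granting the kernel bound, for $x\in\mathbb{T}^d$ one has $|\langle\nabla\rangle^{-s}f(x)|\le(|\mathcal G_s|*|f|)(x)\lesssim_{s,d}\int_{\mathbb{T}^d}|x-y|^{s-d}|f(y)|\,dy$. Extending $|f|$ by zero from the fundamental domain to all of $\R^d$ and enlarging the region of integration, the right-hand side is controlled pointwise (up to a constant) by the Riesz potential $I_s\big(|f|\mathbf 1_{[-\pi,\pi)^d}\big)(x)$. The hypothesis $\frac{d}{d-s}<p<\infty$ forces $\frac1q=\frac1p+\frac sd<1$ and $q<p$, hence $1<q<p<\infty$, so the Hardy--Littlewood--Sobolev inequality on $\R^d$ applies and gives $\|I_s g\|_{L^p(\R^d)}\lesssim_{s,p,d}\|g\|_{L^q(\R^d)}$. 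Restricting back to the fundamental domain and using $\|\,|f|\mathbf 1_{[-\pi,\pi)^d}\|_{L^q(\R^d)}=\|f\|_{L^q(\mathbb{T}^d)}$ then yields the claimed bound. (Equivalently, one could bypass the Riesz potential and apply the weak Young / Hardy--Littlewood--Sobolev inequality directly on $\mathbb{T}^d$ with the kernel $\mathcal G_s\in L^{d/(d-s),\infty}$.)

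The main obstacle is the kernel estimate $|\mathcal G_s(x)|\lesssim|x|^{s-d}$: for small $s$ the defining Fourier series is only conditionally convergent, so the summation-by-parts (or periodization/Poisson-summation) argument must be carried out with some care. Everything downstream of that bound is a black-box invocation of the Euclidean HLS inequality together with the trivial transfer between $\mathbb{T}^d$ and $[-\pi,\pi)^d\subset\R^d$. A cleaner but heavier alternative would be to quote that the Bessel-potential space $\langle\nabla\rangle^{-s}L^q(\mathbb{T}^d)$ coincides with the fractional Sobolev space $W^{s,q}(\mathbb{T}^d)$ (a Mihlin--H\"ormander multiplier argument) and then invoke the standard Sobolev embedding $W^{s,q}(\mathbb{T}^d)\hookrightarrow L^p(\mathbb{T}^d)$.
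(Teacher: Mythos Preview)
Your argument is correct and complete in outline: the Bessel-kernel bound $|\mathcal G_s(x)|\lesssim_{s,d}|x|^{s-d}$ on $\mathbb T^d\setminus\{0\}$ (via periodization of the Euclidean Bessel potential) followed by Hardy--Littlewood--Sobolev is the standard route, and your check that the exponent condition $\frac{d}{d-s}<p<\infty$ forces $1<q<p<\infty$ is exactly what is needed to invoke HLS.

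As for comparison with the paper: the paper does not actually prove this lemma. It is stated as a known ``variety of the standard version'' of the Sobolev inequality and used as a black box throughout, with the accompanying remark only noting the Fourier symbol of $\langle\nabla\rangle^{-s}$ and recording the Morrey-type consequence $\|f\|_{\infty}\lesssim\|f\|_{H^2}$. So you have supplied a proof where the paper supplies none; your kernel-plus-HLS approach is precisely the textbook one, and the alternative you mention at the end (Mihlin--H\"ormander to identify Bessel-potential and Sobolev spaces, then the embedding) would be equally acceptable. No gap to report.
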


\begin{remark}
Note that the this Sobolev inequality is a variety of the standard version. Note that on the Fourier side the symbol of $\left<\nabla\right>^{-s}$ is given by $(1+|k|^2)^{-\frac{s}{2}}$.  
In particular, $\| f\|_{\infty(\mathbb{T}^d)}\lesssim\| f\|_{H^2(\mathbb{T}^d)}$, known as Morrey's inequality.
\end{remark}

\begin{remark}
    If one further requires that $f\in H^1(\T)$ has zero mean, i.e. $\widehat{f}(0)=0$ we have
    \begin{align}\label{2.4}
        \|f\|_{4}\le \|f\|_2^{\frac12}\|\na f\|_2^{\frac12},
    \end{align}
    and
    \begin{align}\label{2.5}
                \|f\|_{6}\le \|f\|_2^{\frac13}\|\na f\|_2^{\frac23}.
    \end{align}
    The proof of \eqref{2.4} and \eqref{2.5} follows from Lemma~\ref{Sobolevineq} and standard interpolation.
\end{remark}


\begin{lemma}\label{lem2.3}
    Suppose $f\in H^{1}(\T)$ and $f$ has zero mean, i.e. $\widehat{f}(0)=0$. Then
    \begin{align}\label{2.2}
        &\| f\|_2\le \||\na|^{-1} f\|_2^{\frac12} \|\na f\|_2^{\frac12},\\
        & \|f\|_2\le \|\na f\|_2.\label{2.3}
    \end{align}
\end{lemma}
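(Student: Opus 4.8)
The plan is to pass to the Fourier side and reduce everything to the Cauchy--Schwarz inequality in $\ell^2$. Writing $f(x)=\frac{1}{(2\pi)^2}\sum_{k\in\Z^2}\widehat f(k)e^{ik\cdot x}$, the hypothesis $\widehat f(0)=0$ means the sum effectively ranges over $k\in\Z^2\setminus\{0\}$, and for such $k$ one has $|k|\ge 1$. By Plancherel's identity,
\[
\|f\|_2^2=\frac{1}{(2\pi)^2}\sum_{k\ne 0}|\widehat f(k)|^2,\qquad
\||\na|^{-1}f\|_2^2=\frac{1}{(2\pi)^2}\sum_{k\ne 0}|k|^{-2}|\widehat f(k)|^2,\qquad
\|\na f\|_2^2=\frac{1}{(2\pi)^2}\sum_{k\ne 0}|k|^{2}|\widehat f(k)|^2.
\]

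For \eqref{2.2} I would, for each $k\ne 0$, factor $|\widehat f(k)|^2=\bigl(|k|^{-1}|\widehat f(k)|\bigr)\bigl(|k|\,|\widehat f(k)|\bigr)$ and apply Cauchy--Schwarz in $\ell^2(\Z^2\setminus\{0\})$, which gives
\[
\sum_{k\ne 0}|\widehat f(k)|^2\le\Bigl(\sum_{k\ne 0}|k|^{-2}|\widehat f(k)|^2\Bigr)^{1/2}\Bigl(\sum_{k\ne 0}|k|^{2}|\widehat f(k)|^2\Bigr)^{1/2}.
\]
Multiplying through by $(2\pi)^{-2}$ and taking square roots yields $\|f\|_2^2\le\||\na|^{-1}f\|_2\,\|\na f\|_2$, which is \eqref{2.2}.

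For \eqref{2.3} I would use that $|k|\ge 1$ on the support of $\widehat f$, so $|\widehat f(k)|^2\le|k|^2|\widehat f(k)|^2$ termwise; summing over $k\ne 0$ gives $\|f\|_2^2\le\|\na f\|_2^2$. (Equivalently, \eqref{2.3} follows from \eqref{2.2} together with the trivial bound $\||\na|^{-1}f\|_2\le\|f\|_2$, again because $|k|^{-2}\le 1$ for $k\ne 0$.)

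There is no genuine obstacle in this argument; the only point worth emphasizing is that the zero-mean condition is what is actually being used — it removes the $k=0$ frequency, where $|\na|^{-1}$ is undefined and where a Poincar\'e-type bound would otherwise fail, and it forces $|k|\ge 1$ on all the relevant frequencies.
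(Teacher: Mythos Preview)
Your proof is correct and follows essentially the same route as the paper: the paper writes $\|f\|_2^2=\int_{\T}|\na|^{-1}f\cdot|\na|f\,dx$ via Parseval and applies Cauchy--Schwarz, which is exactly your Fourier-side factorization and $\ell^2$ Cauchy--Schwarz, and it obtains \eqref{2.3} from Poincar\'e (equivalently, your $|k|\ge 1$ argument).
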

\begin{proof}
    The proof follows from Parseval's identity directly:
    \begin{align*}
        \int_{\T} f\ dx=\int_{\T} |\na|^{-1} f\cdot |\na|f\ dx.
    \end{align*}
    Then \eqref{2.2} follows from the classic Cauchy-Schwarz inequality by noting $\||\na|f\|_2=\|\na f\|_2$. \eqref{2.3} follows from the standard Poincar\'e inequality (or directly from the Fourier side).
 \end{proof}

\begin{lemma}[Log-type interpolation]\label{Loginterpolation}
\noindent For all $ f\in H^s(\T)\ ,\ s>1 $ and suppose $f$ has zero mean, i.e. $\widehat{f}(0)=0$, then \begin{equation*}
\| f\|_\infty\leq C_s\cdot\left( \| f\|_{\dot{H}^1}\sqrt{\log(\| f\|_{\dot{H}^s}+3)}+1\right)\ .
\end{equation*}
\noindent Here $C_s$ is a constant which only depends on s. 
\end{lemma}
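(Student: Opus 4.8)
The plan is to work on the Fourier side and to exploit the fact that in dimension two (the relevant dimension here, since $\T=\mathbb{T}^2$) the sum $\sum_{0<|k|\le M}|k|^{-2}$ grows only like $\log M$. Since $f$ has zero mean, write $f(x)=\frac{1}{(2\pi)^2}\sum_{k\in\Z^2\setminus\{0\}}\widehat f(k)\,e^{ik\cdot x}$, so that $|f(x)|\le \frac{1}{(2\pi)^2}\sum_{k\neq 0}|\widehat f(k)|$ for every $x$; the absolute convergence will be a by-product of the estimates below, and it guarantees that $f$ is continuous, so that $\|f\|_\infty=\sup_x|f(x)|$. Fix a parameter $M>1$, to be chosen at the end, and split the sum into the low modes $0<|k|\le M$ and the high modes $|k|>M$.

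For the low modes, insert $1=|k|^{-1}\cdot|k|$ and apply Cauchy--Schwarz:
\[
\sum_{0<|k|\le M}|\widehat f(k)|\le\Bigl(\sum_{0<|k|\le M}|k|^{-2}\Bigr)^{\!1/2}\Bigl(\sum_{k\neq0}|k|^{2}|\widehat f(k)|^{2}\Bigr)^{\!1/2}.
\]
Comparing the first sum with the integral $\int_{1\le|y|\le M}|y|^{-2}\,dy$ (equivalently, decomposing dyadically into annuli $|k|\sim 2^j$, each containing $O(2^{2j})$ lattice points) gives $\sum_{0<|k|\le M}|k|^{-2}\le C(1+\log M)$, while the second sum is a fixed constant multiple of $\|f\|_{\dot H^1}^2$. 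For the high modes, insert $1=|k|^{-s}\cdot|k|^{s}$ and use Cauchy--Schwarz again:
\[
\sum_{|k|>M}|\widehat f(k)|\le\Bigl(\sum_{|k|>M}|k|^{-2s}\Bigr)^{\!1/2}\Bigl(\sum_{k\neq0}|k|^{2s}|\widehat f(k)|^{2}\Bigr)^{\!1/2}\le C_s\,M^{1-s}\,\|f\|_{\dot H^s},
\]
where $\sum_{|k|>M}|k|^{-2s}\le C_s\,M^{2-2s}$ converges precisely because $s>1$. Combining the two bounds, $|f(x)|\le C_s\bigl(\sqrt{\log M}\,\|f\|_{\dot H^1}+M^{1-s}\|f\|_{\dot H^s}\bigr)$ for all $x$.

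It then remains to optimize in $M$. Choosing $M=(\|f\|_{\dot H^s}+3)^{1/(s-1)}$, which is $>1$, makes $M^{1-s}\|f\|_{\dot H^s}=\|f\|_{\dot H^s}/(\|f\|_{\dot H^s}+3)\le 1$ and $\log M=\frac{1}{s-1}\log(\|f\|_{\dot H^s}+3)$, so that
\[
\sup_x|f(x)|\le C_s\Bigl(\tfrac{1}{\sqrt{s-1}}\,\|f\|_{\dot H^1}\sqrt{\log(\|f\|_{\dot H^s}+3)}+1\Bigr),
\]
and absorbing the factor $(s-1)^{-1/2}$ into $C_s$ gives the claim. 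The only genuinely dimension-sensitive point — and the source of the logarithm — is the estimate $\sum_{0<|k|\le M}|k|^{-2}\lesssim \log M$, which is special to $d=2$; everything else is bookkeeping of constants (all depending only on $s$, since the dimension is fixed) together with the convergence of the high-frequency tail, which is exactly what forces the hypothesis $s>1$. I do not anticipate a real obstacle; the one place requiring a little care is to choose $M$ so that $\log M$, the two tail sums, and the resulting constants are all simultaneously under control.
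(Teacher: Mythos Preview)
Your proof is correct and follows essentially the same approach as the paper's: bound $\|f\|_\infty$ by $\sum_{k\neq 0}|\widehat f(k)|$, split into low and high frequencies, apply Cauchy--Schwarz with weights $|k|$ and $|k|^s$ respectively, and optimize the cutoff. Your choice $M=(\|f\|_{\dot H^s}+3)^{1/(s-1)}$ is slightly cleaner than the paper's, which handles the cases $\|f\|_{\dot H^s}\le 3$ and $\|f\|_{\dot H^s}>3$ separately; otherwise the arguments are identical.
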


\begin{proof}
This lemma is a special case of Lemma 3.1 in \cite{C23a} and we only sketch the proof here. Note that $f$ is mean-zero, we then consider the Fourier series $f(x)=\frac{1}{(2\pi)^2}\sum_{|k|\ge 1}\widehat{f}(k)\ e^{ik\cdot x}$. It then follows that
\begin{equation*}
    \begin{aligned}
\| f\|_{\infty}&\leq\frac{1}{(2\pi)^2}\sum_{|k|\ge 1}|\widehat{f}(k)|\\
&\leq \frac{1}{(2\pi)^2}\left(\sum_{1\le|k|\leq N}|\widehat{f}(k)|+\sum_{|k|>N}|\widehat{f}(k)|\right)\\
&\lesssim \sum_{1\le|k|\leq N}(|\widehat{f}(k)\| k|\cdot|k|^{-1})+\sum_{|k|> N}(|\widehat{f}(k)\| k|^{s}\cdot|k|^{-s})\\
&\lesssim \left(\sum_{1\le|k|\leq N}|\widehat{f}(k)|^2|k|^2 \right)^{\frac12} \cdot \left(\sum_{1\le|k|\leq N}|k|^{-2} \right)^{\frac12}+\left(\sum_{|k|> N}|\widehat{f}(k)|^2|k|^{2s} \right)^{\frac12} \cdot (\sum_{|k|> N}|k|^{-2s})^{\frac12}\\
&\lesssim \frac{1}{N^{s-1}}\left(\sum_{|k|> N}|\widehat{f}(k)|^2|k|^{2s} \right)^{\frac12}+\left(\sum_{1\le|k|\leq N}|\widehat{f}(k)|^2|k|^2\right)^{\frac12}\cdot \sqrt{\log(N+3)}\\
&\lesssim\frac{1}{N^{s-1}}\| f\|_{\dot{H}^s}+\sqrt{\log(N+3)}\| f\|_{\dot{H}^1}\ .
\end{aligned}
\end{equation*}
If $\| f\|_{\dot{H}^s}\leq 3$, we can simply take $N=1$; otherwise take $N^{s-1}$ close to $\| f\|_{\dot{H}^s}$ . 
\end{proof}

\begin{lemma}[Discrete Gr\"{o}nwall's inequality]\label{DiscreteGronwall}
Let $\tau>0$ and $y_n\geq 0$, $\alpha_n\geq 0$, $\beta_n\geq 0$ for $n=1,2,3\cdots $. Suppose
$$\frac{y_{n+1}-y_n}{\tau}\leq \alpha_ny_n+\beta_n\ ,\ \forall\ n\geq 0\ .$$
Then for any $m\geq 1$, we have
\begin{align*}
 y_m\leq \exp\left(\tau\sum_{n=0}^{m-1}\alpha_n\right)\left(y_0+\sum_{k=0}^{m-1}\beta_k\right)\ .
\end{align*}
\end{lemma}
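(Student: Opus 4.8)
The plan is to convert the difference inequality into a multiplicative one-step recursion and then iterate it, collapsing the resulting product of factors into a single exponential via the elementary bound $1+x\le e^x$. Concretely, since $\tau>0$ and all the quantities involved are nonnegative, the hypothesis $\frac{y_{n+1}-y_n}{\tau}\le\alpha_n y_n+\beta_n$ is equivalent to
\begin{equation*}
y_{n+1}\le(1+\tau\alpha_n)\,y_n+\tau\beta_n,\qquad n\ge 0 .
\end{equation*}

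Next I would unroll this recursion down to $n=0$ (equivalently, argue by induction on $m$), which gives, for every $m\ge 1$,
\begin{equation*}
y_m\le\Bigl(\prod_{n=0}^{m-1}(1+\tau\alpha_n)\Bigr)y_0+\sum_{k=0}^{m-1}\Bigl(\prod_{n=k+1}^{m-1}(1+\tau\alpha_n)\Bigr)\tau\beta_k ,
\end{equation*}
with the usual convention that an empty product equals $1$. Because $\alpha_n\ge 0$ and $1+x\le e^x$, each factor obeys $1+\tau\alpha_n\le e^{\tau\alpha_n}$, so every product appearing above is bounded by $\exp\bigl(\tau\sum_{n=k+1}^{m-1}\alpha_n\bigr)\le\exp\bigl(\tau\sum_{n=0}^{m-1}\alpha_n\bigr)$ (and the coefficient of $y_0$ by the same quantity). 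Substituting and factoring out the common exponential yields
\begin{equation*}
y_m\le\exp\Bigl(\tau\sum_{n=0}^{m-1}\alpha_n\Bigr)\Bigl(y_0+\tau\sum_{k=0}^{m-1}\beta_k\Bigr),
\end{equation*}
which is the asserted estimate (with the factor $\tau$ on the $\beta_k$-sum absorbed as it appears in the statement).

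I do not anticipate any real obstacle: the argument is entirely elementary. The only points deserving a word of care are that passing to the multiplicative form uses $\tau>0$ together with nonnegativity of $y_n,\alpha_n,\beta_n$, and that the empty-product convention is needed for the $k=m-1$ term. An equally short alternative is a direct induction on $m$: the base case $m=1$ is immediate from the one-step recursion, and the inductive step follows by applying the recursion once more, using monotonicity of $\exp$ and $1+\tau\alpha_m\le e^{\tau\alpha_m}$.
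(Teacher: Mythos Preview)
Your proposal is correct and follows essentially the same approach as the paper: both rewrite the hypothesis as $y_{n+1}\le(1+\tau\alpha_n)y_n+\tau\beta_n$, invoke $1+\tau\alpha_n\le e^{\tau\alpha_n}$, and iterate (the paper applies the exponential bound first and then telescopes via an integrating-factor multiplication, whereas you unroll first and bound the resulting products afterwards---the content is identical). Your parenthetical about the extra factor $\tau$ on $\sum_k\beta_k$ is well spotted: the paper's stated conclusion omits it, but its own proof and its later application of the lemma both carry the $\tau$, so this is simply a typo in the displayed inequality.
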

\begin{proof}
We only sketch the proof here for the sake of completeness. By the assumption, it follows that for $n\geq 0$,
$$y_{n+1}\leq (1+\alpha_n\tau)y_n+\tau\beta_n\leq e^{\tau\alpha_n}y_n+\tau\beta_n; $$
therefore one can derive that
$$\exp\left( -\tau\sum_{j=0}^{n}\alpha_j\right)y_{n+1}\leq \exp\left( -\tau\sum_{j=0}^{n-1}\alpha_j\right)y_{n}+\exp\left( -\tau\sum_{j=0}^{n}\alpha_j\right)\beta_n.$$
We thus obtain the desired result by performing a telescoping summation.
\end{proof}

\begin{lemma}[$H^k$ boundedness of the exact CH solution]\label{H^kregularity_CH}
Assume $u(x,t)$ is a smooth solution to the Cahn-Hilliard equation in $\mathbb{T}^2$ and the initial data $u_0\in H^k(\mathbb{T}^2)$ for $k\geq 2$. Then,
\begin{equation}
\sup_{t\geq 0}\| u(t)\|_{H^k(\mathbb{T}^2)}\lesssim_{k} 1
\end{equation}
where we omit the dependence on $\nu$ and $u_0$. 
\end{lemma}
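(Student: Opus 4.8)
I would prove this by a cascade of differential inequalities, one per Sobolev level, each converted into a uniform-in-time bound by a uniform Gr\"onwall argument. For any integer $j\ge 1$, pairing the equation $u_t=\Delta\mu$, $\mu=-\nu\Delta u+f(u)$, with $(-\Delta)^j u$ in $L^2$ (equivalently, arguing on the Fourier side) gives the exact identity
\[
\frac{d}{dt}\tfrac12\|u(t)\|_{\dot{H}^j}^2+\nu\|u(t)\|_{\dot{H}^{j+2}}^2=-\big\langle f(u),(-\Delta)^{j+1}u\big\rangle\le \|f(u)\|_{\dot{H}^j}\,\|u\|_{\dot{H}^{j+2}},
\]
in which the $-\nu\Delta u$ part of $\mu$ produces the bi-Laplacian dissipation with the correct sign; since $u$ is a classical solution everything here is rigorous and no Galerkin/regularization step is needed. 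The proof then reduces to (i) estimating $\|f(u)\|_{\dot{H}^j}$ in terms of already-controlled quantities and a small multiple of $\|u\|_{\dot{H}^{j+2}}$, so that the identity becomes a closed inequality $\frac{d}{dt}\|u\|_{\dot{H}^j}^2+\nu\|u\|_{\dot{H}^{j+2}}^2\le C_\nu(\|u\|_{\dot{H}^j}^2+1)$, and (ii) feeding the time-integral of the inequality one level below into the continuous uniform Gr\"onwall lemma --- if $y'\le gy+h$ on $[0,\infty)$ with $\int_t^{t+1}(g+h+y)\,ds\le A$ for all $t\ge0$, then $\sup_{t\ge1}y(t)\le 2Ae^{A}$, and ordinary Gr\"onwall handles $t\in[0,1]$ from $y(0)$.

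For the base regularity I would use the energy identity $\frac{d}{dt}E(u)=-\|\nabla\mu\|_2^2\le0$: since $E\ge0$ and the spatial mean of $u$ is conserved, this gives $\sup_{t\ge0}\|u(t)\|_{H^1}\lesssim 1$, hence $\sup_{t}\|u(t)\|_{L^p}\lesssim_p 1$ for all $p<\infty$ (2D Sobolev), and also $\int_0^\infty\|\nabla\mu\|_2^2\,dt\le E_0$. The delicate step is then the $H^2$ bound, because to establish the inequalities for $j=1$ and $j=2$ one has only the $H^1$ bound available, so $\|u\|_{L^\infty}$ is \emph{not} yet controlled and the cubic term $u^3$ in $f$ must be routed purely through 2D Gagliardo--Nirenberg: for instance $\|f(u)\|_{\dot{H}^1}=\|f'(u)\nabla u\|_2\lesssim\|u\|_{L^6}^2\|\nabla u\|_{L^6}+\|\nabla u\|_2\lesssim\|u\|_{\dot{H}^2}^{2/3}+1$, and $\|f(u)\|_{\dot{H}^2}=\|\Delta(u^3-u)\|_2\lesssim\|u\|_{L^8}^2\|\Delta u\|_{L^4}+\|u\|_{L^6}\|\nabla u\|_{L^6}^2+\|\Delta u\|_2$, each factor bounded by Gagliardo--Nirenberg using only $\sup_t\|u\|_{H^1}\lesssim1$. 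Combining these with the semi-norm interpolations $\|u\|_{\dot{H}^3}\lesssim\|u\|_{\dot{H}^2}^{1/2}\|u\|_{\dot{H}^4}^{1/2}$ and $\|u\|_{\dot{H}^2}\lesssim\|u\|_{\dot{H}^1}^{2/3}\|u\|_{\dot{H}^4}^{1/3}$ and Young's inequality, every nonlinear contribution is absorbed into $\tfrac\nu2\|u\|_{\dot{H}^{j+2}}^2$, giving $\frac{d}{dt}\|u\|_{\dot{H}^1}^2+\nu\|u\|_{\dot{H}^3}^2\le C_\nu$ and $\frac{d}{dt}\|u\|_{\dot{H}^2}^2+\nu\|u\|_{\dot{H}^4}^2\le C_\nu(\|u\|_{\dot{H}^2}^2+1)$. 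Integrating the first over $[t,t+1]$ and using the $H^1$ bound gives $\int_t^{t+1}\|u\|_{\dot{H}^2}^2\,ds\le C$ (by Poincar\'e, as $\|\cdot\|_{\dot{H}^2}\le\|\cdot\|_{\dot{H}^3}$); the uniform Gr\"onwall lemma applied to the second inequality then yields $\sup_{t\ge0}\|u(t)\|_{H^2}\lesssim 1$, hence $\sup_t\|u(t)\|_{L^\infty}\lesssim1$. (The integrated $\dot{H}^2$-bound can alternatively be extracted from the identity $\nu\nabla\Delta u=f'(u)\nabla u-\nabla\mu$ together with $\int_0^\infty\|\nabla\mu\|_2^2\,dt\le E_0$.)

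Once the $L^\infty$ bound is in hand, the higher levels are routine: for every $j\ge2$ the Moser/Kato--Ponce estimate gives $\|f(u)\|_{\dot{H}^j}\le\|u^3-u\|_{H^j}\lesssim(1+\|u\|_{L^\infty}^2)\|u\|_{H^j}\lesssim\|u\|_{H^j}$, and interpolation with the $L^2$ bound gives $\|u\|_{H^j}^2\lesssim\|u\|_{\dot{H}^j}^2+1$, so the master identity becomes $\frac{d}{dt}\|u\|_{\dot{H}^j}^2+\nu\|u\|_{\dot{H}^{j+2}}^2\le C_\nu(\|u\|_{\dot{H}^j}^2+1)$ for all $j\ge2$. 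I would then induct on $k\ge2$: assuming $\sup_t\|u(t)\|_{\dot{H}^{k-1}}\le C_{k-1}$ (base case $k-1=2$ done above), integrate the inequality for $j=k-1$ over $[t,t+1]$ to get $\int_t^{t+1}\|u\|_{\dot{H}^{k+1}}^2\,ds\le C$, hence $\int_t^{t+1}\|u\|_{\dot{H}^k}^2\,ds\le C$; the uniform Gr\"onwall lemma applied to the inequality for $j=k$ then gives $\sup_{t\ge1}\|u(t)\|_{\dot{H}^k}\le C_k$, and ordinary Gr\"onwall covers $[0,1]$ from $\|u_0\|_{\dot{H}^k}$. With $\|u\|_{H^k}^2\lesssim\|u\|_{\dot{H}^k}^2+1$ this gives the claimed bound.

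I expect the only genuine obstacle to be the $H^2$ step: producing the $j=1,2$ differential inequalities with constants controlled by $\|u\|_{H^1}$ but not by $\|u\|_{L^\infty}$ --- precisely the point where the absence of a maximum principle for Cahn--Hilliard matters. This requires choosing the Gagliardo--Nirenberg and interpolation exponents so that each piece of $u^3$ is either absorbed into $\nu\|u\|_{\dot{H}^{j+2}}^2$ or enters at most linearly in $\|u\|_{\dot{H}^j}^2$; a careless estimate would produce a superlinear right-hand side and destroy the argument. A secondary but essential point is that a plain Gr\"onwall estimate only yields exponential-in-$t$ growth, so the uniform-in-time conclusion really does require the integrated-in-time bound $\int_t^{t+1}\|u\|_{\dot{H}^k}^2\,ds\lesssim1$ together with the uniform Gr\"onwall lemma.
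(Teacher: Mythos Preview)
Your proposal is correct and carries out the standard energy-method bootstrap: $L^2$ pairing with $(-\Delta)^j u$, Gagliardo--Nirenberg to close the $j=1,2$ levels using only the $H^1$ energy bound, and then uniform Gr\"onwall plus induction for $j\ge 3$ once $\|u\|_{L^\infty}$ is in hand. The paper takes a genuinely different route: it works from the mild (Duhamel) formulation
\[
u(t)=e^{-\nu t\Delta^2}u_0+\int_0^t e^{-\nu(t-s)\Delta^2}\Delta(u^3-u)\,ds
\]
and estimates $\|D^2 u\|_2$ directly by bounding the biharmonic heat kernel $K_1$ associated with $\Delta D^2 e^{-\nu\gamma\Delta^2}$. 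For $\gamma\ge 1$ one has the pointwise decay $\|K_1\|_2\lesssim \gamma^{-1/2}e^{-\nu\gamma}$, and since $\|u^3-u\|_2\lesssim \|u\|_6^3\lesssim \|u\|_{H^1}^3\lesssim 1$, the long-time piece of the Duhamel integral is bounded uniformly; the short-time piece $0<\gamma<1$ is dispatched by ``standard local theory''. The induction to higher $H^k$ is only asserted.

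What each approach buys: the paper's semigroup argument is very short and extracts the uniform-in-time bound in one stroke from the exponential kernel decay, with no Gr\"onwall machinery at all; it is, however, sketchy (the $\gamma<1$ integrability and the inductive step are not spelled out). Your differential-inequality approach is longer but entirely self-contained, gives the explicit dissipation inequalities $\frac{d}{dt}\|u\|_{\dot H^j}^2+\nu\|u\|_{\dot H^{j+2}}^2\le C_\nu(\|u\|_{\dot H^j}^2+1)$, and isolates exactly where the absence of a maximum principle bites (the $j=1,2$ estimates). Either is acceptable here; yours is closer to what one would write if one also wanted absorbing-set or attractor statements.
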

\begin{proof}
    The proof is standard and we postpone the proof to the Appendix.
\end{proof}

\section{Energy stability of the scheme (\ref{1stScheme}) on the 2D Cahn-Hilliard equation }
\label{section:stability2DAC}
Recall that the Cahn-Hilliard equation \eqref{CH} takes the following form:
\begin{align*}\begin{cases} 
\partial_t u=-\nu \Delta^2 u+\De f(u),
\\ u(x,0)=u_0 .
\end{cases}\end{align*}
Here $f(u)=u^3-u$, and the spatial domain $\Omega$ is taken to be the two dimensional $2\pi-$periodic torus $\T$. The corresponding energy is defined by $E(u)=\int_{\Omega}(\frac{\nu}{2} |\nabla u|^2+F(u))\ dx$ , where $F(u)=\frac14 (u^2-1)^2$, the anti-derivative of $f(u)$. Recall that we consider the stabilized exponential integrator scheme \eqref{1stScheme}:
\begin{equation}
\label{1stScheme1}
   \left\{\begin{aligned}
&\frac{u^{n+1}-u^{n}}{\tau}+S\De^2(u^{n+1}-u^n)=\frac{e^{-\nu\De^2\tau}-1}{\tau}u^n+\frac{1-e^{-\nu\De^2\tau}}{\tau\nu\De}\Pi_N f(u^n),
\\ & u^0=\Pi_Nu_0.
 \end{aligned}
 \right.
\end{equation}
Here $\Pi_N$ is truncation of Fourier modes of $L^2$ functions to $|k|_\infty\leq N$. Note that here $E^0=E(\Pi_Nu_0)$ while $E_0=E(u_0)$ and in general $E_0\neq E^0$. In particular the following statement holds.
\begin{lemma} Suppose $E^0=E(\Pi_Nu_0)$ and $E_0=E(u_0)$ as defined above, the following inequality holds:
\begin{equation*}
\sup_N\ E(\Pi_Nu_0)\lesssim 1+E_0, \mbox{where}\ u_0\in H^1({\T})\ .
\end{equation*}
\end{lemma}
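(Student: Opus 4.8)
The plan is to expand the energy explicitly and estimate the gradient and the potential parts separately, the former by Parseval's identity and the latter by reducing it to the (uniform in $N$) $L^4$-boundedness of the spectral projection $\Pi_N$. Write
\[
E(\Pi_N u_0)=\frac{\nu}{2}\|\na \Pi_N u_0\|_2^2+\int_\T F(\Pi_N u_0)\,dx .
\]
Since $\Pi_N$ merely zeroes out Fourier coefficients, Parseval's identity gives $\|\na\Pi_N u_0\|_2\le\|\na u_0\|_2$, and because $F\ge 0$ we have $\tfrac{\nu}{2}\|\na u_0\|_2^2\le E_0$, so the gradient part is bounded by $E_0$. For the potential part I would use the elementary pointwise bound $F(v)=\tfrac14(v^2-1)^2\le\tfrac14(v^4+1)$, which yields $\int_\T F(\Pi_N u_0)\,dx\le\tfrac14\|\Pi_N u_0\|_4^4+\tfrac14|\T|$, so that everything reduces to controlling $\|\Pi_N u_0\|_4^4$ by a constant multiple of $1+E_0$.

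The first key step is to show $\|\Pi_N u_0\|_4\lesssim\|u_0\|_4$ uniformly in $N$. This is where the shape of the truncation matters: the cutoff $|k|_\infty\le N$ makes $\Pi_N=S_N^{x_1}\circ S_N^{x_2}$ a composition of one-dimensional Dirichlet partial-sum operators, one acting in each variable, and each of these is bounded on $L^4(\mathbb{T})$ with a constant independent of $N$ (the classical M. Riesz theorem, equivalently the $L^4$-boundedness of the periodic conjugation operator). Lifting each one-variable bound to $\T$ by Fubini and Minkowski's integral inequality and composing gives $\|\Pi_N u_0\|_{L^4(\T)}\le C\|u_0\|_{L^4(\T)}$; note that $u_0\in H^1(\T)\hookrightarrow L^4(\T)$, so the right-hand side is finite.

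It then remains to bound $\|u_0\|_4^4$ in terms of $E_0$. From the definition of the energy, $E_0\ge\tfrac14\|u_0\|_4^4-\tfrac12\|u_0\|_2^2+\tfrac14|\T|$; estimating $\|u_0\|_2^2\le|\T|^{1/2}\|u_0\|_4^2\le\tfrac18\|u_0\|_4^4+C$ by H\"older and Young, one absorbs the negative term into $\tfrac14\|u_0\|_4^4$ and obtains $\|u_0\|_4^4\le C(1+E_0)$. Combining the three estimates gives $E(\Pi_N u_0)\le C(1+E_0)$ with $C$ independent of $N$, which is the claim. The only non-routine ingredient is the uniform $L^4$ bound on $\Pi_N$, and I expect this to be the sole point requiring care; it is essential here that the truncation is over the cube $\{|k|_\infty\le N\}$ rather than a Euclidean ball, for which the analogous uniform $L^p$ bound fails when $p\neq 2$ in dimension $\ge 2$. (If one were content with a bound polynomial in $E_0$ instead of the linear one, a softer argument using only $\|\Pi_N u_0\|_{H^1}\le\|u_0\|_{H^1}$ and the $2$D Ladyzhenskaya inequality would suffice and would avoid M. Riesz entirely.)
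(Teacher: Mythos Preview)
Your argument is correct and follows essentially the same line as the paper. Both proofs split the energy into gradient and potential parts, handle the gradient part by Parseval, and reduce the potential part to the uniform-in-$N$ bound $\|\Pi_N u_0\|_{L^4(\T)}\le C\|u_0\|_{L^4(\T)}$. The paper reaches this bound by first asserting $L^4$-convergence of the square Dirichlet partial sums and then invoking the Uniform Boundedness Principle, whereas you go straight to the M.~Riesz theorem tensorized via the product structure of the cutoff $\{|k|_\infty\le N\}$; these are two phrasings of the same fact, and your route is the more explicit one. Your final step bounding $\|u_0\|_4^4\lesssim 1+E_0$ directly from the algebraic structure of $F$ is also slightly more self-contained than the paper's, which leaves that step implicit. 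Your remark that the cube (not ball) truncation is what makes this work is exactly right and is not stated in the paper.
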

\begin{proof}
We rewrite $\Pi_Nu_0$ as $\frac{1}{(2\pi)^2}\sum_{|k|\leq N}\widehat{u_0}(k)e^{ik\cdot x}$ , namely the Dirichlet partial sum of $u_0$. 
\begin{equation*}
\begin{aligned}
\|\nabla\left(\Pi_Nu_0\right)\|_{L^2(\T)}^2=\frac{1}{(2\pi)^2}\sum_{|k|\leq N}|k|^2|\widehat{u_0}(k)|^2\leq \frac{1}{(2\pi)^2}\sum_{|k|\in \Z^2}|k|^2|\widehat{u_0}(k)|^2=\|\nabla\left(u_0\right)\|_{L^2(\T)}^2
\end{aligned}\ .
\end{equation*}
On the potential energy part, by the Sobolev inequality Lemma \ref{Sobolevineq}, $\| u_0\|_{L^4(\T)}\lesssim\| u_0\|_{H^1(\T)}$, this shows $u_0\in L^4(\T)$ and hence the Dirichlet partial sum $\Pi_Nu_0$ converges to $u_0$ in $L^4(\T)$. Then
$\lim_N\|\Pi_Nu_0\|_{L^4(\T)}= \| u_0\|_{L^4(\T)}$, which leads to $\sup_N\|\Pi_Nu_0\|_{L^4(\T)}<\infty$. By the Uniform Boundedness Principle, we derive $\sup_N\|\Pi_N\|<\infty$, i.e. $\sup_N\| \Pi_N u_0\|_{L^4(\T)}\leq c\| u_0\|_{L^4(\T)}$ for an absolute constant $c$. Combining the two estimates above we finish the proof.
\end{proof}

We will prove Theorem \ref{Thm_1stab} by induction. To start with,  let us multiply the equation by $\frac{\nu\De\tau}{e^{-\nu\tau\De^2}-1}(u^{n+1}-u^n)$ and integrate over $\T$. Then the LHS of \eqref{1stScheme1} is
\begin{align}
   \mbox{LHS}= &\frac{1}{\tau}\left(u^{n+1}-u^{n}, \frac{\nu\De\tau}{e^{-\nu\tau\De^2}-1}(u^{n+1}-u^{n})\right)+S\left(\De^2(u^{n+1}-u^{n}), \frac{\nu\De\tau}{e^{-\nu\tau\De^2}-1}(u^{n+1}-u^{n})\right)\\
   =&\left(u^{n+1}-u^{n}, \frac{-\nu\De}{1-e^{-\nu\tau\De^2}}(u^{n+1}-u^{n})\right)+S\left(u^{n+1}-u^{n}, \frac{-\nu\De^3\tau}{1-e^{-\nu\tau\De^2}}(u^{n+1}-u^{n})\right),
\end{align}
where $(\cdot,\cdot)$ is the usual $L^2$ inner product. We observe that 
both the two operators $A=\frac{-\nu\De}{1-e^{-\nu\tau\De^2}}$ and $B=\frac{-\nu\De^3\tau}{1-e^{-\nu\tau\De^2}}$ are positive semi-definite and self-adjoint, therefore we have
\begin{align}\label{3.4}
    \mbox{LHS}=\| \sqrt{A}(u^{n+1}-u^{n})\|_2^2+S\|\sqrt{B}(u^{n+1}-u^{n})\|_2^2,
\end{align}
where the operator $\sqrt{A}$ and $\sqrt{B}$ correspond to the following Fourier symbols:
\begin{align}\label{3.5}
    \widehat{\sqrt{A} u}(k)=\sqrt{\frac{\nu|k|^2}{1-e^{-\nu|k|^4\tau}}}\widehat{u}(k);\quad \widehat{\sqrt{B} u}(k)=\sqrt{\frac{\nu|k|^6\tau}{1-e^{-\nu|k|^4\tau}}}\widehat{u}(k).
\end{align}
To proceed from here, it is important to examine the behaviors of these two operators. Indeed we state the following lemma.
\begin{lemma}\label{lem3.2}
    Suppose the operators $\sqrt{A}$ and $\sqrt{B}$ are defined in \eqref{3.5} above. Then the following inequalities hold for any function $u\in H^3(\T)$ and any $v\in H^1(\T)$ with zero-mean:
    \begin{align}\label{3.6}
   \left(\frac{1}{\tau}\right)^{\frac12} \| |\na|^{-1}v\|_2\le 
 \|\sqrt{A}v\|_2,\quad
        \|\na u\|_2\le \|\sqrt{B}u\|_2.
    \end{align}
\end{lemma}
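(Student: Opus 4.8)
The plan is to verify both inequalities purely on the Fourier side, since $\sqrt{A}$ and $\sqrt{B}$ are Fourier multipliers with explicit symbols and the claimed inequalities are comparisons of $\ell^2$-weighted norms mode by mode. Write $v = \frac{1}{(2\pi)^2}\sum_{|k|\ge 1}\widehat v(k) e^{ik\cdot x}$ (the zero-mean hypothesis kills the $k=0$ mode, which is essential since $|k|^{-1}$ is otherwise undefined), and similarly expand $u$; by Parseval it suffices to compare the symbols coefficient-by-coefficient.

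For the first inequality, the content is the elementary scalar bound
\begin{equation*}
\frac{1}{\tau}\cdot\frac{1}{|k|^2} \le \frac{\nu |k|^2}{1-e^{-\nu|k|^4\tau}}\qquad\text{for all } |k|\ge 1,
\end{equation*}
which, after multiplying through, is equivalent to $1-e^{-x}\le x$ with $x=\nu|k|^4\tau>0$ — a standard convexity fact. For the second inequality, the content is
\begin{equation*}
|k|^2 \le \frac{\nu|k|^6\tau}{1-e^{-\nu|k|^4\tau}},
\end{equation*}
i.e. again $1-e^{-x}\le x$ with the same $x=\nu|k|^4\tau$. So in fact both parts of \eqref{3.6} reduce to the single inequality $1-e^{-x}\le x$ for $x\ge 0$; I would state that as a one-line observation and then apply it in each case. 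Summing the resulting mode-wise inequalities and invoking $\||\na|^{-1}v\|_2^2 = \frac{1}{(2\pi)^2}\sum_{|k|\ge1}|k|^{-2}|\widehat v(k)|^2$ and $\|\na u\|_2^2 = \frac{1}{(2\pi)^2}\sum_{k}|k|^2|\widehat u(k)|^2$ gives the claimed $L^2$ bounds.

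There is essentially no obstacle here; the only point requiring a little care is the role of the mean-zero hypothesis on $v$ in the first inequality — without it the $k=0$ term on the left-hand side ($|\na|^{-1}v$ evaluated at $k=0$) is ill-defined, whereas for $u$ in the second inequality the $k=0$ mode contributes $0$ to both sides regardless, so mean-zero is not strictly needed there but is harmless to assume. I would note that the constants are sharp (equality in the limit $\tau\to 0^+$ mode by mode), which is consistent with the fact that these bounds are later used to extract exactly the coercivity needed to absorb the "explicit forward" diffusion, and that the factor $(1/\tau)^{1/2}$ in front of $\||\na|^{-1}v\|_2$ is what makes the first bound useful for large $\tau$.
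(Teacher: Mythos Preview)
Your proof is correct and follows essentially the same approach as the paper: both arguments work mode-by-mode on the Fourier side via Parseval and reduce each inequality to the scalar bound $\frac{\nu\tau|k|^4}{1-e^{-\nu\tau|k|^4}}\ge 1$. The only cosmetic difference is that the paper verifies this scalar bound by introducing $h(x)=\frac{\nu\tau x^4}{1-e^{-\nu\tau x^4}}-1$ and checking $h'(x)>0$, $h(1)>0$, whereas you invoke the equivalent and more elementary statement $1-e^{-x}\le x$ directly; your route is marginally cleaner but not substantively different.
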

\begin{proof}
    The \eqref{3.6} can be proved from the Fourier side. In particular, by the Parseval's identity we have
    \begin{align}\label{3.7}
        \|\sqrt{A}v\|_2^2=\frac{1}{(2\pi)^2}\sum_{|k|\ge 1}\frac{\nu |k|^2}{1-e^{-\nu|k|^4\tau}}|\widehat{v}(k)|^2,
    \end{align}
    therefore it suffices to show 
\begin{align}\label{3.8}
    \frac{\nu |k|^2}{1-e^{-\nu|k|^4\tau}}\ge \frac{1}{\tau|k|^2}, \quad\mbox{or}\quad \frac{\nu\tau |k|^4}{1-e^{-\nu\tau|k|^4}}\ge 1.
\end{align}
To show \eqref{3.8}, we can define an 1D function for $x\ge 1$, $h(x)=\frac{\nu\tau x^4}{1-e^{-\nu\tau x^4}}-1$. By direct computation we can show $h'(x)>0$ for $x\ge 1$ and $h(1)>0$ noting $\nu\tau>0$. Therefore \eqref{3.8} is proved. Then \eqref{3.7} implies
\begin{align}
    \frac{1}{\tau}\||\na|^{-1} v\|_2^2\le \|\sqrt{A}v\|_2^2.
\end{align}
By the same reason we can show $\|\na u\|_2\le \|\sqrt{B}u\|_2$. We hereby conclude \eqref{3.6}.
\end{proof}
We then continue the proof of Theorem~\ref{Thm_1stab}. We now focus on the RHS of \eqref{1stScheme1}:
\begin{align}
    \mbox{RHS}=&\left(\frac{e^{-\nu\De^2\tau}-1}{\tau} u^{n},\frac{-\nu\De\tau}{1-e^{-\nu\tau\De^2}}(u^{n+1}-u^{n})\right)+\left(\frac{1-e^{-\nu\De^2\tau}}{\nu\tau\De}\Pi_{N}f(u^n),\frac{-\nu\De\tau}{1-e^{-\nu\tau\De^2}}(u^{n+1}-u^{n})\right)\\
    =&\nu\left(u^{n},\De(u^{n+1}-u^{n})\right)-\left(\Pi_{N} f(u^n),u^{n+1}-u^{n}\right).
\end{align}
Note that $u^{n+1},u^n\in X_N$, we have 
\begin{align}\label{3.12}
    \mbox{RHS}=\nu\left(\na u^n, \na(u^n-u^{n+1})\right)-\left(f(u^n),u^{n+1}-u^{n}\right).
\end{align}
Using the identity $2a(a-b)=a^2-b^2+(a-b)^2$, we derive from \eqref{3.12} that
\begin{align}\label{3.13}
    \mbox{RHS}=\frac{\nu}{2}(\|\na u^n\|_2^2-\|\na u^{n+1}\|_2^2+\|\na u^{n}-\na u^{n+1}\|_2^2)-(f(u^n),u^{n+1}-u^{n}).
\end{align}
To proceed further, by the fundamental theorem of calculus we arrive at
 \begin{equation}\label{3.14}
    \begin{aligned}
    F(u^{n+1})-F(u^n)&=f(u^n)(u^{n+1}-u^n)+\int_{u^n}^{u^{n+1}}f'(s)(u^{n+1}-s)\ ds\\
    &=f(u^n)(u^{n+1}-u^n)+\int_{u^n}^{u^{n+1}}(3s^2-1)(u^{n+1}-s)\ ds\\
    &=f(u^n)(u^{n+1}-u^n)+\frac14(u^{n+1}-u^n)^2\left(3(u^n)^2+(u^{n+1})^2+2u^nu^{n+1}-2\right)\ .
    \end{aligned}
 \end{equation}
Combining the estimates \eqref{3.4}, \eqref{3.13} and \eqref{3.14} we arrive at  
\begin{equation}\label{3.15}
\begin{aligned}
   & \|\sqrt{A}(u^{n+1}-u^{n}\|_2^2+S\|\sqrt{B}(u^{n+1}-u^{n})\|_2^2+E(u^{n+1})-E(u^{n})\\=& \frac{\nu}{2}\|\na(u^{n+1}-u^n)\|_2^2+\frac14\left((u^{n+1}-u^n)^2,3(u^n)^2+(u^{n+1})^2+2u^nu^{n+1}-2\right).
\end{aligned}
\end{equation}
By further considering \eqref{3.6} and \eqref{3.15} we indeed have
 \begin{equation}
     \begin{aligned}
         & \frac{1}{\tau}\||\na|^{-1}(u^{n+1}-u^{n})\|_2^2+(S-\frac{\nu}{2})\|\na(u^{n+1}-u^{n})\|_2^2+E(u^{n+1})-E(u^{n})\\& \le \frac14\left((u^{n+1}-u^n)^2,3(u^n)^2+(u^{n+1})^2+2u^nu^{n+1}-2\right). 
     \end{aligned}
 \end{equation}
Moreover 
by Lemma~\ref{lem2.3}, we derive that
 \begin{equation}
     \begin{aligned}
         & (\sqrt{\frac{2S}{\tau}}+\frac{S}{2}-\frac{\nu}{2})\|u^{n+1}-u^{n}\|_2^2+E(u^{n+1})-E(u^{n}) \le \frac14\left((u^{n+1}-u^n)^2,3(u^n)^2+(u^{n+1})^2+2u^nu^{n+1}-2\right). 
     \end{aligned}
 \end{equation}
Therefore to show the energy dissipation, it suffices to take
\begin{equation}
\label{1stcondtion}
    \begin{aligned}
       \sqrt{\frac{2S}{\tau}}+\frac{S}{2}-\frac{\nu}{2}\geq\frac32\max\left\{\| u^n\|_\infty^2\ ,\ \| u^{n+1}\|_\infty^2\right\}
    \end{aligned}\ .
\end{equation}
We rewrite the numerical scheme \eqref{1stScheme} as follows:
\begin{equation}\label{3.19}
    \begin{aligned}
      u^{n+1}=\frac{S\tau\De^2+e^{-\nu\De^2\tau}}{1+S\tau \De^2}u^n+\frac{1-e^{-\nu\De^2\tau}}{(1+S\tau\De^2)\nu\De}\Pi_N f(u^n)
    \end{aligned}
\end{equation}

\noindent By the interpolation lemma (Lemma \ref{Loginterpolation}), to control $\| u^{n+1}\|_\infty$ and $\| u^n\|_\infty$, we can only focus on the $\dot{H}^1$-norm and $\dot{H}^{\frac32}$-norm.

\begin{lemma}\label{1stLem}
There is an absolute constant $c_1>0$ such that for any $n\geq0$
\begin{align}
\begin{cases}
    \| u^{n+1}\|_{\dot{H}^{\frac32}(\T)}&\leq c_1\cdot \left(\sqrt{\frac{N}{\nu}}+\frac{1}{S} \right)\cdot(E^n+1)  \\
\|u^{n+1}\|_{\dot{H}^1(\T)}&\leq\left(\frac1S+\frac1S\| u^n\|^2_\infty\right)\cdot\|u^n\|_{L^2(\T)}+\|u^n\|_{\dot{H}^1(\T)}\ .
\end{cases}
    \end{align}
\end{lemma}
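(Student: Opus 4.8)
The proof works entirely on the Fourier side, starting from the multiplier form \eqref{3.19}. Writing $u^{n+1}=m_1(\nabla)u^n+m_2(\nabla)\Pi_N f(u^n)$, the relevant symbols are
\[
m_1(k)=\frac{S\tau|k|^4+e^{-\nu|k|^4\tau}}{1+S\tau|k|^4},\qquad m_2(k)=-\,\frac{1-e^{-\nu|k|^4\tau}}{\nu|k|^2\,(1+S\tau|k|^4)}.
\]
The first step is to record two elementary symbol estimates. Since $0\le e^{-x}\le 1$ one has $0\le m_1(k)\le 1$ for every $k$; and using $1-e^{-x}\le x$ together with $\tfrac{\tau|k|^4}{1+S\tau|k|^4}\le\tfrac1S$ one gets $|k|^2|m_2(k)|\le\tfrac1S$ for all $|k|\ge1$, hence $|k|^{3/2}|m_2(k)|\le\tfrac1{S|k|^{1/2}}$ and $|k|\,|m_2(k)|\le\tfrac1S$. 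I will also use three facts that hold throughout: every $u^n$ lies in $X_N$ (by induction on the scheme, since Fourier multipliers and $\Pi_N$ preserve the support $\{|k|_\infty\le N\}$), so $|k|\le\sqrt2\,N$ on its spectrum; the energy controls $\|\nabla u^n\|_2^2\le\tfrac2\nu E(u^n)=\tfrac2\nu E^n$; and $\|u^n\|_{L^4(\T)}^4\lesssim E^n+1$, which follows from $E^n\ge\tfrac14\int_\T((u^n)^2-1)^2\,dx$ after using Cauchy--Schwarz to absorb the resulting $\int(u^n)^2\,dx$ term.

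For the $\dot{H}^{\frac32}$ estimate, split $u^{n+1}$ by Parseval into its $m_1$-part and its $m_2$-part. For the $m_1$-part, $0\le m_1\le1$ and $|k|^{3/2}\le(\sqrt2\,N)^{1/2}|k|$ on $X_N$ give a contribution $\lesssim\sqrt N\,\|u^n\|_{\dot{H}^1}\le\sqrt{N/\nu}\,\sqrt{E^n}\le\sqrt{N/\nu}\,(E^n+1)$. For the $m_2$-part, $|k|^{3/2}|m_2(k)|\le\tfrac1{S|k|^{1/2}}$ yields a contribution $\lesssim\tfrac1S\|\Pi_N f(u^n)\|_{\dot{H}^{-1/2}}\le\tfrac1S\|f(u^n)\|_{\dot{H}^{-1/2}}$; by Lemma~\ref{Sobolevineq} (with $s=\tfrac12$, $p=2$, $q=\tfrac43$) this is $\lesssim\tfrac1S\|f(u^n)\|_{L^{4/3}(\T)}$, and since $\|f(u^n)\|_{L^{4/3}}\le\|u^n\|_{L^4}^3+\|u^n\|_{L^{4/3}}\lesssim(E^n+1)^{3/4}+(E^n+1)^{1/4}\lesssim E^n+1$ the $m_2$-part is $\lesssim\tfrac1S(E^n+1)$. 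Adding the two contributions gives the first inequality.

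The $\dot{H}^1$ estimate uses the same splitting. The $m_1$-part is $\le\|u^n\|_{\dot{H}^1}$ directly from $0\le m_1\le1$. For the $m_2$-part, $|k|\,|m_2(k)|\le\tfrac1S$ gives $\le\tfrac1S\|\Pi_N f(u^n)\|_2\le\tfrac1S\|f(u^n)\|_2$, and the pointwise bound $|f(u^n)|=|(u^n)^3-u^n|\le(1+\|u^n\|_\infty^2)|u^n|$ gives $\|f(u^n)\|_2\le(1+\|u^n\|_\infty^2)\|u^n\|_2$. Adding the two contributions gives the second inequality, with $c_1$ depending only on $|\T|$.

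The only genuine subtlety is the $m_2$-contribution to the $\dot{H}^{\frac32}$-norm: a crude $L^2$ bound $\|\Pi_N f(u^n)\|_2$ would leave $\|f(u^n)\|_2\lesssim(E^n+1)^{3/2}$, one power too many for the stated inequality, so one must exploit the $|k|^{-1/2}$ smoothing built into $m_2$ together with the negative-order embedding $L^{4/3}(\T)\hookrightarrow\dot{H}^{-1/2}(\T)$ to match the cubic nonlinearity against the available $L^4$ energy bound. (In any event the exact power of $E^n+1$ here is immaterial downstream, since this quantity enters later only through $\log(\|u^{n+1}\|_{\dot{H}^{3/2}}+3)$ in Lemma~\ref{Loginterpolation}.) Everything else is routine bookkeeping with the two symbol estimates and the energy inequality.
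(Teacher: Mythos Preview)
Your proof is correct and follows essentially the same route as the paper: the same multiplier decomposition from \eqref{3.19}, the same two symbol bounds $0\le m_1\le1$ and $|k|^{3/2}|m_2(k)|\le\tfrac1{S|k|^{1/2}}$ (the latter via $1-e^{-x}\le x$), and the same use of the embedding $L^{4/3}(\T)\hookrightarrow H^{-1/2}(\T)$ from Lemma~\ref{Sobolevineq} to turn $\|f(u^n)\|_{\dot H^{-1/2}}$ into $\|f(u^n)\|_{L^{4/3}}\lesssim E^n+1$. Your treatment is in fact slightly more careful than the paper's (you track the factor $\sqrt2$ coming from $|k|\le\sqrt2\,N$ on $X_N$, and you make explicit the intermediate $L^4$ bound $\|u^n\|_{L^4}^4\lesssim E^n+1$), but there is no substantive difference in method.
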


\begin{proof}
 It suffices to consider Fourier modes $1\le|k|\le N$ from the Fourier side. We then obtain that
\begin{equation}
  \left\{  \begin{aligned}
          &\frac{(S\tau|k|^4+e^{-\nu\tau|k|^4})|k|^{\frac32}}{1+S\tau|k|^4}\le |k|\cdot N^{\frac12}\\
          &  \frac{(1-e^{-\nu\tau|k|^4})|k|^{\frac32}}{(1+S\tau|k|^4)\nu|k|^2}\le\frac{1}{S|k|^{\frac12}} ,
    \end{aligned}
    \right.
\end{equation}
where we have observed that $1-e^{-\nu\tau|k|^4}\le \nu\tau|k|^4$. It then follows that
\begin{equation}  \label{H32norm}
    \| u^{n+1}\|_{\dot{H}^{\frac32}(\T)}\le N^{\frac12}\| u^n\|_{\dot{H}^1(\T)}+\frac{1}{S}\|\big \langle\nabla\big \rangle^{-\frac12}f(u^n)\|_{L^2(\T)}  .
\end{equation}
Here the notaion $\big\langle\nabla\big\rangle^s=(1-\Delta)^{\frac{s}{2}}$ , corresponds to the Fourier side $(1+|k|^2)^{s/2}$. 
Note that by the Sobolev inequality Lemma~\ref{Sobolevineq} we have
\begin{align*}
    \|\big \langle\nabla\big \rangle^{-\frac12}f(u^n)\|_{L^2(\T)}&\lesssim\| f(u^n)\|_{L^{\frac43}(\T)}=\|(u^n)^3-u^n\|_{L^{\frac43}(\T)}\\
    &=\left(\int_{\T}((u^n)^3-u^n)^{\frac43}\ dx\right)^{\frac34}\\&\lesssim E^n+1\ .
\end{align*}
Therefore \eqref{H32norm} can be estimated by the following:
\begin{equation*}
    \| u^{n+1}\|_{\dot{H}^{\frac32}(\T)}\lesssim\left(\sqrt{\frac{N}{\nu}}+\frac{1}{S}\right)(E^n+1).
\end{equation*}
Similarly, we get
\begin{equation*}
    \left\{\begin{aligned}
          & \frac{(S\tau|k|^4+e^{-\nu\tau|k|^4})|k|}{1+S\tau|k|^4}\le |k|
           \\
           &\frac{(1-e^{-\nu\tau|k|^4})|k|}{(1+S\tau|k|^4)\nu|k|^2}\le\frac{\nu\tau|k|^4}{S\tau\nu|k|^6 }|k|=\frac{1}{S|k|}\ .
    \end{aligned}
    \right.
\end{equation*}
This implies for $|k|\ge 1$ that
\begin{equation*}
\begin{aligned}
    \|u^{n+1}\|_{\dot{H}^1(\T)}&\le\|u^n\|_{\dot{H}^1(\T)}+\frac1S\|f(u^n)\|_{L^2(\T)}\\
  &\le\| u^n\|_{\dot{H}^1(\T)}+\frac1S\|(1-(u^n)^2)\cdot u^n\|_{L^2(\T)}\\
  &\le\| u^n\|_{\dot{H}^1(\T)}+(\frac1S+\frac{\| u^n\|^2_\infty}{S})\| u^n\|_{L^2(\T)}.
  \end{aligned}
\end{equation*}
\end{proof}
\begin{proof}[Proof of Theorem~\ref{Thm_1stab}]
Now we will complete the proof for \textbf{Theorem} \ref{Thm_1stab} by induction:

\noindent\textbf{Step 1:} The induction $n\to n+1$ step.
Assuming $E^n\leq E^{n-1}\leq\cdots\leq E^0$ and $E^n\leq \sup_N\ E(\Pi_Nu_0)$, we will show $E^{n+1}\leq E^n$. This implies $\| u^n\|^2_{\dot{H}^1}=\| \nabla u^n\| ^2_{L^2}\leq \frac{2E^n}{\nu}\leq\frac{2E^0}{\nu}$. By Lemma \ref{Loginterpolation} and adopting the notation $f\lesssim_{E^0}g$ to denote that $f\leq C(E^0)\cdot g$ for some constant $C(E^0)$ depending only on $E^0$, we have
\begin{equation}\label{1stInfinity_bound_n}
\begin{aligned}
\| u^n\|^2_\infty&\lesssim\| u^n\|^2_{\dot{H}^1}\left(\sqrt{\log(3+c_1\left(\sqrt{\frac{N}{\nu}}+\frac{1}{S}\right)(E^{n-1}+1))}\right)^2+1\\
&\lesssim\frac{2E^0}{\nu}\left(1+\log(S)+\log(\frac1\nu)+(\log(N))\right)+1\\
&\lesssim_{E^0}\nu^{-1}\left(1+\log(S)+\log(\frac{1}{\nu})+\log(N)\right)+1\ .
\end{aligned}
\end{equation}
Define $m_0 \coloneqq\nu^{-1}\left(1+\log(S)+|\log(\nu)|+\log(N)\right)$, and note that $E^0\leq \sup_N\ E(\Pi_Nu_0)\lesssim E_0+1$, the inequality above (\ref{1stInfinity_bound_n}) is then rewritten as follows:
\begin{equation*}
\| u^n\|^2_\infty\lesssim_{E_0}m_0+1.
\end{equation*}
On the other hand by Lemma \ref{1stLem},
\begin{equation}\label{1stInifinity_bound_n+1}
\begin{aligned}
\| u^{n+1}\|_\infty^2&\lesssim1+\| u^{n+1}\|_{\dot{H}^1}^2\log(3+\| u^{n+1}\|_{\dot{H}^{\frac32}})\\
&\lesssim1+(1+\frac{1+\| u^n\|^2_\infty}{S})^2\| u^n\|_{\dot{H}^1}^2 \log(3+\| u^{n+1}\|_{\dot{H}^{\frac32}})\\
&\lesssim_{E_0}1+(1+\frac{m_0+1}{S})^2\frac{1}{\nu}\log(3+\| u^{n+1}\|_{\dot{H}^{\frac32}})\\
&\lesssim_{E_0} 1+(1+\frac{m_0}{S})^2m_0
\\&\lesssim_{E_0} 1+\frac{m_0^3}{S^2}+m_0 .
\end{aligned}
\end{equation}
The sufficient condition (\ref{1stcondtion}) thus becomes 
\begin{equation*}
\left\{
\begin{aligned}
& \sqrt{\frac{2S}{\tau}}+\frac{S}{2}-\frac{\nu}{2}\ge C(E_0)\left(1+m_0+\frac{m_0^3}{S^2}\right),\\
&m_0=\nu^{-1}\left(1+\log(S)+|\log(\nu)|+\log(N)\right)\ .
\end{aligned}
\right.
\end{equation*}
It then suffices for us to choose $S$ such that
$$S\gg_{E_0}m_0+\nu=\nu^{-1}\left(1+\log(S)+|\log(\nu)|+\log(N)\right)+\nu\ ,$$
where $B\gg_{E_0}D$ means there exists a large constant depending only on $E_0$. In fact, for $\nu\gtrsim1$ , we can take $S\gg_{E_0}\nu+\log(N)$; if $0<\nu\ll 1$, we will choose $S=C_{E_0}\cdot\nu^{-1}(|\log\nu|+\log N)$ , where $C_{E_0}$ is a large constant depending only on $E_0$. Therefore it suffices to choose 
\begin{equation}\label{3.25}
S=C_{E_0}\nu^{-1}(\log N+|\log \nu|)+\nu.
\end{equation}

\noindent\textbf{Step 2:} We now check the induction base step $n=1$. Clearly we only need to check
$$ \sqrt{\frac{2S}{\tau}}+\frac{S}{2}-\frac{\nu}{2}\geq\| \Pi_N u_0\|^2_\infty+\frac12\| u^1\| ^2_\infty.$$
By Lemma \ref{1stLem}, 
\begin{equation*}
\begin{aligned}
\| u^1\|_{\dot{H}^1}&\leq\left(1+\frac1S+\frac1S\| \Pi_N u_0\|^2_\infty\right)\cdot\| u_0\|_{\dot{H}^1}\\
&\leq\left(1+\frac1S+\frac1S\| \Pi_N u_0\|^2_\infty\right)\cdot\sqrt{\frac{2E^0}{\nu}}\ .
\end{aligned}
\end{equation*}
As a result, 
\begin{equation*}
\begin{aligned}
\| u^1\|_\infty&\lesssim1+\| u^1\|_{\dot{H}^1}\sqrt{\log(3+\| u^1\|_{\dot{H}^{\frac32}})}\\
&\lesssim1+\left(1+\frac1S+\frac1S\| \Pi_N u_0\|^2_\infty\right)\sqrt{\frac{2E^0}{\nu}}\sqrt{\log\left(3+c_1\left(\sqrt{\frac{S}{\nu}}+\frac{1}{S}\right)(E_0+1)\right)}
\\
&\lesssim_{E_0}1+\left(1+\frac1S+\frac1S\| \Pi_N u_0\|^2_\infty\right)\cdot\nu^{-\frac12}\cdot\sqrt{1+\log(S)+|\log(\nu)|+\log N}.
\end{aligned}
\end{equation*}
Thus we need to choose $S$ such that
\begin{equation*}
\begin{aligned}
 \sqrt{\frac{2S}{\tau}}+\frac{S}{2}-\frac{\nu}{2}&\geq\| \Pi_N u_0\|^2_\infty+C_{E_0}\cdot\left(1+\frac1S+\frac1S\| \Pi_N u_0\|^2_\infty\right)^2\cdot\nu^{-1}\\&\cdot\left(1+\log(S)+|\log(\nu)|+\log N \right)\ ,
\end{aligned}
\end{equation*}
where $C_{E_0}$ is a large constant depending only on $E_0$. Note that by Morrey's inequality, 
$$\| \Pi_N u_0\|_{L^\infty(\T)}\lesssim\| \Pi_N u_0\|_{H^2(\T)}\lesssim\| u_0\|_{H^2(\T)}.$$
Then it suffices to take $S$ such that
\begin{equation}
S\gg_{E_0}\| u_0\|^2_{H^2}+\nu^{-1}(|\log(\nu)|+\log N)+\nu\ .
\end{equation}
This completes the induction and hence proves the theorem. 

\end{proof}

\section{$L^2$ error estimate for the scheme (\ref{1stScheme}) on the 2D Cahn-Hilliard equation}
\label{section:error}
In this section, we will like to study the $L^2$ error between the semi-implicit numerical solution and the exact PDE solution in the domain $\T$ and eventually prove Theorem~\ref{1st_error}. To start with, we consider the auxiliary $L^2$ error estimate for near solutions.

\subsection{Auxiliary $L^2$ error estimate for near solutions}
Consider the following auxiliary system $u^n$ and $v^n$ in $X_N$ for the first order scheme:
\begin{equation}\label{1st_auxscheme}\left\{
\begin{aligned}
&\frac{u^{n+1}-u^{n}}{\tau}+S\De^2(u^{n+1}-u^n)=\frac{e^{-\nu\De^2\tau}-1}{\tau}u^n+\frac{1-e^{-\nu\De^2\tau}}{\tau\nu\De}\Pi_N f(u^n)+\De G^1_n\\
&\frac{v^{n+1}-v^{n}}{\tau}+S\De^2(v^{n+1}-v^n)=\frac{e^{-\nu\De^2\tau}-1}{\tau}v^n+\frac{1-e^{-\nu\De^2\tau}}{\tau\nu\De}\Pi_N f(v^n)+\De G^2_n\\
&u^0=u_0\ ,\ v^0=v_0\ .
\end{aligned}
\right.
\end{equation}
We define that $G_n=G^1_n-G^2_n$. 
\begin{prop}\label{prop_aux}
For solutions of (\ref{1st_auxscheme}), assume for some $N_1>0$,
\begin{equation}\label{auxCond}
\sup_{n\ge 0}\|\na u^n\|_2+\sup_{n\geq 0}\| v^n\|_{\infty}+\sup_{n\geq 0}\| \na v^n\|_2\leq N_1\  .
\end{equation}
Then for any $m\geq 1$,
\begin{equation}
\begin{aligned}
&\| u^m-v^m\| ^2_2=\| \ve^m\| ^2_2\leq y_m\\
\leq& \exp\left(m\tau\cdot\frac{C(1+N_1^4)}{\nu}\right)\cdot\left(\|L u_0-L v_0\|_2^2+S\tau \|Mu_0-M v_0\|_2^2+\frac{4}{\nu}\tau\sum_{n=0}^{m-1}\| L^2 G_n\|_2^2\right) .
\end{aligned}
\end{equation}
where $C>0$ is an absolute constant and $L, M$ are defined from the Fourier side as below:
\begin{align}
    \begin{cases}
        &\widehat{Lu}(k)=\sqrt{\frac{\nu|k|^4\tau}{1-e^{-\nu|k|^4\tau}}}\widehat{u}(k)\\
        & \widehat{Mu}(k)=\sqrt{\frac{\nu|k|^8\tau}{1-e^{-\nu|k|^4\tau}}}\widehat{u}(k).
    \end{cases}
\end{align}
The operator $L^2=L\circ L$, i.e. $L$ composite with $L$.
\end{prop}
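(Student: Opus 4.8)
The plan is to run a discrete energy argument for the Lyapunov quantity
\[
y_n:=\norm{L\ve^n}_2^2+S\tau\norm{M\ve^n}_2^2,\qquad \ve^n:=u^n-v^n,
\]
and then close with the discrete Gr\"onwall inequality (Lemma~\ref{DiscreteGronwall}). Subtracting the two equations in \eqref{1st_auxscheme} (with $G_n=G^1_n-G^2_n$) gives
\[
\frac{\ve^{n+1}-\ve^{n}}{\tau}+S\De^2(\ve^{n+1}-\ve^n)=\frac{e^{-\nu\De^2\tau}-1}{\tau}\ve^n+\frac{1-e^{-\nu\De^2\tau}}{\tau\nu\De}\Pi_N\bke{f(u^n)-f(v^n)}+\De G_n .
\]
I would multiply by $\tau$ and pair in $L^2(\T)$ with the test function $L^2(\ve^{n+1}+\ve^n)$ (equivalently, compute on the Fourier side). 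The mechanism is a few elementary symbol identities on $X_N$: $L^2\De^2=M^2$, $(e^{-\nu\De^2\tau}-1)L^2=-\nu\tau\De^2$ and $\tfrac{1-e^{-\nu\De^2\tau}}{\nu\De}L^2=\tau\De$. Using these together with $(a-b,a+b)=\norm{a}^2-\norm{b}^2$, the left-hand side collapses to exactly $y_{n+1}-y_n$, while the three right-hand terms become, respectively, $-\nu\tau\bke{\De\ve^n,\De(\ve^{n+1}+\ve^n)}$, the nonlinear term $\tau\bke{f(u^n)-f(v^n),\De(\ve^{n+1}+\ve^n)}$ (the cutoff $\Pi_N$ may be dropped since $\ve^{n},\ve^{n+1}\in X_N$), and $\tau\bke{L^2 G_n,\De(\ve^{n+1}+\ve^n)}$.

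Next I would estimate these three terms. The linear term, after one integration by parts and Young's inequality, is bounded by $-\tfrac{\nu\tau}{2}\norm{\De\ve^n}_2^2+\tfrac{\nu\tau}{2}\norm{\De\ve^{n+1}}_2^2$; the forcing term, by Young, is bounded by a constant times $\tfrac{\tau}{\nu}\norm{L^2G_n}_2^2$ plus small multiples of $\tau\norm{\De\ve^{n+1}}_2^2$ and $\tau\norm{\De\ve^{n}}_2^2$. For the nonlinear term the key device is to Taylor-expand $f$ about $v^n$,
\[
f(u^n)-f(v^n)=\bke{3(v^n)^2-1}\ve^n+3v^n(\ve^n)^2+(\ve^n)^3 ,
\]
and then use $\norm{v^n}_\infty\le N_1$, the Gagliardo--Nirenberg inequalities \eqref{2.4}--\eqref{2.5}, and --- crucially --- the a priori bound $\norm{\na\ve^n}_2\le\norm{\na u^n}_2+\norm{\na v^n}_2\le 2N_1$ that is contained in \eqref{auxCond}; this yields $\norm{f(u^n)-f(v^n)}_2\lesssim (1+N_1^2)\norm{\ve^n}_2$, so that no $L^\infty$ control on $u^n$ is ever needed. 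Pairing against $\De\ve^{n+1}$ and $\De\ve^n$ and applying Young's inequality then bounds the nonlinear term by a constant times $\tfrac{1+N_1^4}{\nu}\tau\norm{\ve^n}_2^2$ plus small multiples of $\tau\norm{\De\ve^{n+1}}_2^2$ and $\tau\norm{\De\ve^n}_2^2$. All the $\tau\norm{\De\ve^n}_2^2$ remainders are absorbed into the $-\tfrac{\nu\tau}{2}\norm{\De\ve^n}_2^2$ produced by the linear term, and all the $\tau\norm{\De\ve^{n+1}}_2^2$ remainders are absorbed into the dissipation $S\tau\norm{M\ve^{n+1}}_2^2$ sitting inside $y_{n+1}$, using $\norm{M\ve^{n+1}}_2\ge\norm{\De\ve^{n+1}}_2$ (proved exactly as in Lemma~\ref{lem3.2}) and $S\gtrsim\nu$ --- it is precisely the absorption into the $\nu\tau$-sized level-$n$ dissipation that generates the factor $\nu^{-1}$. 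Combining with $\norm{\ve^n}_2\le\norm{L\ve^n}_2\le\sqrt{y_n}$ (the symbol of $L$ is $\ge 1$ since $\tfrac{x}{1-e^{-x}}\ge 1$) gives
\[
\frac{y_{n+1}-y_n}{\tau}\le \frac{C(1+N_1^4)}{\nu}\,y_n+\frac{C}{\nu}\norm{L^2 G_n}_2^2 ,\qquad n\ge 0,
\]
with $C$ absolute.

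It then remains to invoke Lemma~\ref{DiscreteGronwall} with $\alpha_n\equiv \tfrac{C(1+N_1^4)}{\nu}$ and $\beta_n=\tfrac{C}{\nu}\norm{L^2G_n}_2^2$, and to note $y_0=\norm{Lu_0-Lv_0}_2^2+S\tau\norm{Mu_0-Mv_0}_2^2$; this yields the stated bound for $y_m$, and then $\norm{\ve^m}_2^2\le\norm{L\ve^m}_2^2\le y_m$ concludes the proof. I expect the main obstacle to be the bookkeeping in the absorption step: one must arrange that every $\norm{\De\ve^{n+1}}_2^2$ (and $\norm{\na\ve^{n+1}}_2^2$) generated by Young's inequality from the linear, nonlinear and forcing terms is comfortably dominated by the single positive quantity $S\tau\norm{M\ve^{n+1}}_2^2$, which is exactly what forces the lower bound $S\gtrsim\nu$ and the $\nu^{-1}$ in the constant; the second delicate point is the Taylor-expansion device that lets $\norm{f(u^n)-f(v^n)}_2$ be controlled purely by $\norm{\ve^n}_2$ and powers of $N_1$ despite the lack of a maximum principle for $u^n$. (If $u_0$ and $v_0$ have different means, $\ve^n$ carries a constant mode which is preserved by the scheme and annihilated by $M$; it is already included in $y_n$ and causes no difficulty, so one may simply assume $\ve^n$ is mean-zero throughout.)
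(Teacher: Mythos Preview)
Your overall strategy --- subtract, test against a weighted multiplier, Taylor-expand the nonlinearity, use \eqref{2.4}--\eqref{2.5} together with $\|\na\ve^n\|_2\le 2N_1$, then apply the discrete Gr\"onwall lemma --- matches the paper exactly, and the symbol identities you quote are all correct. The gap is in the choice of test function and the resulting absorption step.

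By pairing with $L^2(\ve^{n+1}+\ve^n)$ you collapse the left-hand side cleanly to $y_{n+1}-y_n$, but you lose all the extra positive ``difference'' terms on that side. The linear contribution is then
\[
-\nu\tau\bke{\De\ve^n,\De(\ve^{n+1}+\ve^n)}\le -\tfrac{\nu\tau}{2}\|\De\ve^n\|_2^2+\tfrac{\nu\tau}{2}\|\De\ve^{n+1}\|_2^2,
\]
and the coefficient $\tfrac{\nu\tau}{2}$ in front of $\|\De\ve^{n+1}\|_2^2$ cannot be made small. Your plan is to absorb this (plus the nonlinear and forcing remainders) into $S\tau\|M\ve^{n+1}\|_2^2\subset y_{n+1}$; but that only yields $(1-\theta)\,y_{n+1}\le (1+\alpha\tau)y_n+\tau\beta_n$ with $\theta\sim\nu/S$ (or $\theta\sim\tfrac12$ if you absorb into the $L$-part, since $\nu\tau\|\De\cdot\|_2^2\le\|L\cdot\|_2^2$). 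Iterating gives an extra factor $(1-\theta)^{-m}=e^{m\theta+\cdots}$, and since $\theta$ carries no factor of $\tau$ this blows up as $m\to\infty$ for fixed $\tau,S$. So the Gr\"onwall closure fails and the stated bound does not follow.

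The paper avoids this by testing with $L^2\ve^{n+1}$ instead of $L^2(\ve^{n+1}+\ve^n)$. Via $2a(a-b)=a^2-b^2+(a-b)^2$ the left-hand side then reads
\[
\tfrac{1}{2\tau}\bke{\|L\ve^{n+1}\|_2^2-\|L\ve^n\|_2^2+\|L(\ve^{n+1}-\ve^n)\|_2^2}+\tfrac{S}{2}\bke{\|M\ve^{n+1}\|_2^2-\|M\ve^n\|_2^2+\|M(\ve^{n+1}-\ve^n)\|_2^2},
\]
while the linear term becomes $-\nu(\De\ve^n,\De\ve^{n+1})=-\tfrac{\nu}{2}\bke{\|\De\ve^{n+1}\|_2^2+\|\De\ve^n\|_2^2}+\tfrac{\nu}{2}\|\De(\ve^{n+1}-\ve^n)\|_2^2$. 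Now the bad difference piece $\tfrac{\nu}{2}\|\De(\ve^{n+1}-\ve^n)\|_2^2$ is swallowed by the \emph{extra} term $\tfrac{S}{2}\|M(\ve^{n+1}-\ve^n)\|_2^2$ (using $\|M\cdot\|_2\ge\|\De\cdot\|_2$ and $S\ge\nu$), and the genuinely good term $-\tfrac{\nu}{2}\|\De\ve^{n+1}\|_2^2$ absorbs the $\tfrac{\nu}{8}\|\De\ve^{n+1}\|_2^2$ remainders from Young's inequality in the nonlinear and forcing estimates. One obtains directly
\[
\frac{y_{n+1}-y_n}{\tau}\le \frac{C(1+N_1^4)}{\nu}\,y_n+\frac{4}{\nu}\|L^2G_n\|_2^2,
\]
with no loss in the $y_{n+1}$ coefficient, and the Gr\"onwall step closes. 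Your treatment of the nonlinearity and of $\|\ve^n\|_2\le\|L\ve^n\|_2$ is fine and is exactly what the paper does; only the test function needs to be changed.
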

\begin{lemma}\label{lem4.2}
We remark here by similar arguments in Lemma~\ref{lem3.2} we have for $g,h\in C^\infty(\T)$ that the following holds:
\begin{equation}\label{4.16}
    \|\De g\|_2\le \|M g\|_2\lesssim \|\De g\|_2+\sqrt{\tau}\|\De^2 g\|_2,\quad \| h\|_2\le \|L h\|_2\lesssim \|h\|_2+\sqrt{\tau}\|\De h\|_2.
\end{equation}
\end{lemma}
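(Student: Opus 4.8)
The plan is to reduce the whole statement to a single scalar Fourier-multiplier inequality. By Parseval's identity (with the paper's normalization), writing $\theta_k:=\nu|k|^4\tau\ge 0$ and $\phi(\theta):=\theta/(1-e^{-\theta})$ for $\theta>0$, $\phi(0):=1$ by continuity, one has, since $\widehat{Lh}(k)=\sqrt{\phi(\theta_k)}\,\widehat h(k)$ and $\widehat{Mg}(k)=|k|^2\sqrt{\phi(\theta_k)}\,\widehat g(k)$,
\begin{align*}
\|Lh\|_2^2=\frac{1}{(2\pi)^2}\sum_{k\in\Z^2}\phi(\theta_k)\,|\widehat h(k)|^2,\qquad
\|Mg\|_2^2=\frac{1}{(2\pi)^2}\sum_{k\in\Z^2}|k|^4\,\phi(\theta_k)\,|\widehat g(k)|^2.
\end{align*}
Together with $\|h\|_2^2=\frac1{(2\pi)^2}\sum_k|\widehat h(k)|^2$, $\|\De g\|_2^2=\frac1{(2\pi)^2}\sum_k|k|^4|\widehat g(k)|^2$ and $\|\De^2 g\|_2^2=\frac1{(2\pi)^2}\sum_k|k|^8|\widehat g(k)|^2$, all four inequalities in \eqref{4.16} follow at once from the two-sided elementary bound $1\le\phi(\theta)\le 1+\theta$ for every $\theta\ge 0$ (the mode $k=0$ needs no separate discussion because the symbol is continuous there, and no zero-mean hypothesis on $g,h$ is needed here, in contrast to Lemma~\ref{lem3.2}).

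To prove the scalar bound: the lower estimate $\phi(\theta)\ge 1$ is equivalent to $\theta\ge 1-e^{-\theta}$, i.e. to $e^{-\theta}\ge 1-\theta$; the upper estimate $\phi(\theta)\le 1+\theta$ is equivalent to $1-(1+\theta)e^{-\theta}\ge 0$, i.e. to $1+\theta\le e^{\theta}$. Both are standard convexity inequalities for the exponential and hold for all real $\theta$; indeed the upper one is just the estimate $1-e^{-\nu\tau|k|^4}\le\nu\tau|k|^4$ already used in the proof of Lemma~\ref{1stLem}, applied with $\theta=\theta_k$. (If one prefers, one can instead rerun the monotonicity argument from Lemma~\ref{lem3.2}: $\phi$ is increasing, $\phi(0^+)=1$ gives the lower bound, and $\phi(\theta)/(1+\theta)$ is monotone with limit $1$ at both ends gives the upper bound — but the two pointwise inequalities above are quicker.)

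Feeding $\phi(\theta_k)\ge 1$ into the two Parseval identities yields $\|h\|_2\le\|Lh\|_2$ and $\|\De g\|_2\le\|Mg\|_2$; feeding $\phi(\theta_k)\le 1+\nu\tau|k|^4$ yields $\|Lh\|_2^2\le\|h\|_2^2+\nu\tau\|\De h\|_2^2$ and $\|Mg\|_2^2\le\|\De g\|_2^2+\nu\tau\|\De^2 g\|_2^2$, and then $\sqrt{a+b}\le\sqrt a+\sqrt b$ gives $\|Lh\|_2\le\|h\|_2+\sqrt{\nu\tau}\,\|\De h\|_2$ and $\|Mg\|_2\le\|\De g\|_2+\sqrt{\nu\tau}\,\|\De^2 g\|_2$, which are the upper bounds in \eqref{4.16} once the fixed factor $\sqrt\nu$ is absorbed into the implicit constant. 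There is essentially no obstacle in this argument; the only points requiring a moment of care are the behaviour of $\phi$ at the endpoints $\theta\to 0^+$ and $\theta\to\infty$ (both captured by $1\le\phi\le 1+\theta$) and the bookkeeping of the $\nu$-dependence hidden in $\lesssim$.
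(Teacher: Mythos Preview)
Your proof is correct. The approach differs from the paper's in a useful way, so a brief comparison is worthwhile.

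The paper establishes the lower bounds by referring to the monotonicity argument of Lemma~\ref{lem3.2}, and for the upper bound on $\|Lh\|_2$ it performs a high--low frequency split at the threshold $\nu|k|^4\tau=\log 2$: on the high-frequency piece it uses $1-e^{-\nu|k|^4\tau}\ge\tfrac12$ to extract the $\tau\|\De h\|_2^2$ contribution, and on the low-frequency piece it uses the monotonicity of $\phi(x)=x/(1-e^{-x})$ to bound the multiplier by the constant $\phi(\log 2)$, yielding the $\|h\|_2^2$ contribution. Your argument replaces this dichotomy by the single pointwise two-sided estimate $1\le\phi(\theta)\le 1+\theta$, valid for all $\theta\ge 0$, which immediately gives both bounds without any case analysis. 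This is shorter and conceptually cleaner; the paper's route has the minor pedagogical virtue of making explicit which frequency range contributes which term, but your global bound loses nothing quantitatively. Your remark that the continuous extension $\phi(0)=1$ handles the zero mode (so no mean-zero hypothesis is needed) is also a slight sharpening over the paper, which tacitly sums over $|k|\ge 1$.
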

\begin{proof}[Proof of Proposition~\ref{prop_aux}]
We assume Lemma~\ref{lem4.2} for the time being. Write $\ve^n=u^n-v^n$. Then
\begin{equation}\label{4.2}
\frac{\ve^{n+1}-\ve^n}{\tau}+S\De^2(\ve^{n+1}-\ve^{n})=\frac{e^{-\nu\De^2\tau}-1}{\tau}\ve^n+\frac{1-e^{-\nu\De^2\tau}}{\tau\nu\De}\Pi_N (f(u^n)-f(v^n))+\De G_n\ .
\end{equation}
Taking $L^2$-inner product with $\frac{\nu\De^2\tau}{1-e^{-\nu\De^2\tau}}\ve^{n+1}$ on both sides of \eqref{4.2} and recalling similar computations in previous section, we have the LHS of \eqref{4.2} is  
\begin{equation}\label{4.3}
\begin{aligned}
\mbox{LHS}=&\left(\frac{\ve^{n+1}-\ve^n}{\tau}+S\De^2(\ve^{n+1}-\ve^n),\ \frac{\nu\De^2\tau}{1-e^{-\nu\De^2\tau}}\ve^{n+1}\right)\\
=&\frac{1}{2\tau}(\|L\ve^{n+1}\|_2^2-\|L\ve^{n}\|_2^2+\|L(\ve^{n+1}-\ve^{n})\|_2^2)+\frac{S}{2}(\|M\ve^{n+1}\|_2^2-\|M\ve^{n}\|_2^2+\|M(\ve^{n+1}-\ve^{n})\|_2^2).
\end{aligned}
\end{equation}
Similarly, the RHS of \eqref{4.2} can be given as below
\begin{equation}\label{4.5}
\begin{aligned}
\mbox{RHS}=&\left(\frac{e^{-\nu\De^2\tau}-1}{\tau}\ve^n+\De G_n+\frac{1-e^{-\nu\De^2\tau}}{\tau\nu\De}\Pi_N(f(u^n)-f(v^n)),\ \frac{\nu\De^2\tau}{1-e^{-\nu\De^2\tau}}\ve^{n+1}\right)\\
\coloneqq& I_1+I_2+I_3,
\end{aligned}
\end{equation}
where $(.\ ,\ .)$ denotes the $L^2$ inner product.
We then estimate different parts $I_1,I_2$ and $I_3$ as follows.
\begin{equation}\label{I1}
\begin{aligned}
    I_1=&\left(\frac{e^{-\nu\De^2\tau}-1}{\tau}\ve^n,\frac{\nu\De^2\tau}{1-e^{-\nu\De^2\tau}}\ve^{n+1}\right)\\
    =&-\nu(\De \ve^{n},\De \ve^{n+1})\\
    =&-\nu(\De \ve^{n+1},\De \ve^{n+1})+\nu(\De\ve^{n+1},\De\ve^{n+1}-\De\ve^n)\\
    =&-\frac{\nu}{2}(\|\De \ve^{n+1}\|_2^2+\|\De \ve^n\|_2^2)+\frac{\nu}{2}\|\De \ve^{n+1}-\De\ve^{n}\|_2^2.
\end{aligned}
\end{equation}
Similarly, by H\"{o}lder's inequality we obtain that 
\begin{equation}\label{I2}
I_2=\left(\De G_n,\frac{\nu\De^2\tau}{1-e^{-\nu\De^2\tau}}\ve^{n+1}\right)\leq\| \De \ve^{n+1}\|_{2}\| L^2 G_n\|_{2}\le  \frac{2}{\nu} \| L^2 G_n\|^2_{2}+\frac{\nu}{8}\| \De \ve^{n+1}\|_{2}^2,
\end{equation}
where $L^2(G_n)=L(LG_n)$. On the other hand, note that $\Pi_N$ is a self-adjoint operator $\left(\Pi_Nf,\ g\right)=\left(f,\ \Pi_Ng\right)$, since it is just an $N$-th Fourier mode truncation. Therefore we have
\begin{equation}
    \begin{aligned}
        I_3=\left(\frac{1-e^{-\nu\De^2\tau}}{\tau\nu\De}\Pi_N (f(u^n)-f(v^n)),\frac{\nu\De^2\tau}{1-e^{-\nu\De^2\tau}}\ve^{n+1}\right)=\left(f(u^n)-f(v^n),\De \Pi_N \ve^{n+1}\right).
    \end{aligned}
\end{equation}
Then by the fundamental theorem of calculus, we have
\begin{equation}\label{4.10}
\begin{aligned}
f(u^n)-f(v^n)&=\int_0^1f'(v^n+s\ve^n)ds\cdot \ve^n\\
&=(a_1+a_2(v^n)^2)\ve^n+a_3v^n(\ve^n)^2+a_4(\ve^n)^3\ ,
\end{aligned}
\end{equation}
where $a_i$ are exact constants can be computed. Note that we will denote $C$ to be an absolute constant whose value may vary in different lines. To proceed further we indeed have by \eqref{auxCond} that
\begin{equation}\label{4.11}
\begin{aligned}
|\left((a_1+a_2(v^n)^2)\ve^n\ ,\ \De\Pi_N\ve^{n+1}\right)|&\leq C(1+\| v^n\|_\infty^2)\| \De \ve^{n+1}\|_{2}\| \ve^n\|_{2}\\
&\le \frac{C(1+N_1^4)}{\nu}\| \ve^{n}\|_{2}^2+\frac{\nu}{8}\|\De \ve^{n+1}\|_{2}^2.
\end{aligned}
\end{equation}
Moreover, the other two terms can be estimated similarly. In particular, recalling \eqref{2.4} that for $f$ defined in $\T$ with zero mean one has $\|f\|_4^2\lesssim \|f\|_2\|\na f\|_2$. Therefore we have
\begin{equation}\label{4.12}
\begin{aligned}
|\left(a_3v^n(\ve^n)^2\ ,\De \Pi_N\ve^{n+1}\right)|&\leq C\|v^n\|_{\infty}\|\ve^n\|_4^2\|\De \ve^{n+1}\|_2
\le \frac{CN_1^2}{\nu}\|\ve^n\|_4^4+\frac{\nu}{8}\|\De \ve^{n+1}\|_2^2
\\
&\le C\frac{N_1^2}{\nu}\| \ve^{n}\|_2^2\|\na \ve^n\|_2^2+\frac{\nu}{8}  \|\De \ve^{n+1}\|_{2}^2\leq \frac{C N_1^4}{\nu}\| \ve^n\|_{2}^2+\frac{\nu}{8}\|\De \ve^{n+1}\|_2^2.
\end{aligned}
\end{equation}
Similarly, recalling \eqref{2.5} that we also have
\begin{equation}\label{4.13}
\begin{aligned}
|\left(a_4(\ve^n)^3\ ,\De\Pi_N \ve^{n+1}\right)|&\le C\|\ve^{n}\|_6^3\|\De \ve^{n+1}\|_2\le \frac{C}{\nu}\| \ve^{n}\|_6^6 +\frac{\nu}{8}\|\De \ve^{n+1}\|_2^2 \\
&\leq \frac{C}{\nu}\|\ve^n\|_2^2\|\na\ve^n\|_2^4+\frac{\nu}{8}\|\De \ve^{n+1}\|_2^2 \\
&\le\frac{CN_1^4}{\nu}\|\ve^n\|_2^2+\frac{\nu}{8}\|\De \ve^{n+1}\|_2^2.
\end{aligned}
\end{equation}
One can then conclude from \eqref{4.10}, \eqref{4.11}, \eqref{4.12} and \eqref{4.13} that
\begin{equation}\label{I3}
|I_3|=|\left(f(u^n)-f(v^n),\De\Pi_N \ve^{n+1}\right)|\le  \frac{C(1+N_1^4)}{\nu}\|\ve^n\|_2^2+\frac{3\nu}{8}\|\De \ve^{n+1}\|_2^2.
\end{equation}
Collecting the estimates \eqref{4.5}, \eqref{I1}, \eqref{I2} and \eqref{I3}, we get
\begin{equation}\label{4.15}
    \begin{aligned}
        \mbox{RHS}\le \frac{\nu}{2}\|\De\ve^{n+1}-\De\ve^n\|_2^2+\frac{2}{\nu}\|L^2G_n\|_2^2+\frac{C(1+N_1^4)}{\nu}\|\ve^n\|_2^2.
    \end{aligned}
\end{equation} Taking the same choice of $S$ as in Theorem~\ref{Thm_1stab}, it then follows by \eqref{4.3} and \eqref{4.15} that
\begin{equation}\label{4.17}
\begin{aligned}
&\frac{\| L\ve^{n+1}\| ^2_2-\| L\ve^n\|_2^2}{\tau}+S(\| M\ve^{n+1}\| ^2_2-\|M \ve^n\|_2^2)
\\ \leq &\frac{4}{\nu}\| L^2G_n\|_2^2+\frac{C(1+N_1^4)}{\nu}\|\ve^n\|_2^2,
\end{aligned}
\end{equation}
where $C$ is an absolute positive constant that can be computed exactly. Then by defining  
\begin{equation}
\begin{aligned}
&y_n=\|L\ve^n\|_2^2+\tau S\|M\ve^n\|_2^2
,\\ &\alpha= \frac{C(1+N_1^4)}{\nu},
\\ &\beta_n=\frac{4}{\nu}\|L^2 G_n\|_2^2,
\end{aligned}
\end{equation}
\eqref{4.17} thus can be formulated into
$$ \frac{y_{n+1}-y_n}{\tau}\leq \alpha y_n+\beta_n\ . $$
Applying discrete Gr\"onwall's inequality Lemma \ref{DiscreteGronwall}, we arrive at
\begin{equation}
\begin{aligned}
&\| u^m-v^m\| ^2_2=\| \ve^m\| ^2_2\leq y_m\\
\leq& \exp\left(m\tau\cdot\frac{C(1+N_1^4)}{\nu}\right)\cdot\left(\|L u_0-L v_0\|_2^2+S\tau \|Mu_0-M v_0\|_2^2+\frac{4}{\nu}\tau\sum_{n=0}^{m-1}\| L^2 G_n\|_2^2\right) .
\end{aligned}
\end{equation}

\end{proof}
\begin{remark}
    The proof of Proposition~\ref{prop_aux} only requires the uniform boundedness of $\|\na u^n\|_2+\|\na v^n\|_2$ rather than $\|\De u^n\|_2+\|\De v^n\|_2$. It somehow makes sense by considering the equation \eqref{1stScheme} where the nonlinear part requires lower regularity assumption and the linear part can be controlled by its sign.
\end{remark}

\begin{proof}[Proof of Lemma~\ref{lem4.2}]
 First of all the same arguments from Lemma~\ref{lem3.2} give
 \begin{align*}
     \|\De g\|_2\le \|Mg\|_2,\quad \|h\|_2\le\|L h\|_2.
 \end{align*}
To see the rest, we consider the Fourier series of $Lh$ again:
\begin{equation}\label{4.21}
   \begin{aligned}
       \|Lh\|_2^2=&\frac{1}{(2\pi)^2}\sum_{|k|\ge 1}\frac{\nu|k|^4\tau}{1-e^{-\nu|k|^4\tau}}|\widehat{h}(k)|^2\\
       =&\frac{1}{(2\pi)^2}(\sum_{\nu|k|^4\tau\le \log 2}+\sum_{\nu|k|^4\tau\ge \log 2}\frac{\nu|k|^4\tau}{1-e^{-\nu|k|^4\tau}}|\widehat{h}(k)|^2 )\\
       \lesssim&\sum_{\nu|k|^4\tau\le \log 2}\frac{\nu|k|^4\tau}{1-e^{-\nu|k|^4\tau}}|\widehat{h}(k)|^2 +\tau \sum_{\nu|k|^4\tau\ge \log 2}|k|^4|\widehat{h}(k)|^2.
   \end{aligned} 
\end{equation}
The high frequency part can be controlled by $\|\De h\|_2$ by using the standard Parseval's
identity. For the lower frequency part, we then consider the function $\phi(x)=\frac{x}{1-e^{-x}}$. By direct computation, we see that $\phi'(x)>0$ for $x>0$ which shows that $\phi(x)$ is monotone increasing. Therefore we have
\begin{equation}\label{4.22}
    \begin{aligned}
        \sum_{\nu|k|^4\tau \le \log 2}\frac{\nu |k|^4\tau}{1-e^{-\nu|k|^4\tau}} |\widehat{h}(k)|^2\lesssim \sum_{|k|\ge 1}|\widehat{h}(k)|^2.
    \end{aligned}
\end{equation}
Collecting the estimates \eqref{4.21} and \eqref{4.22} we then get the desired result. The same reason gives 
\begin{align*}
    \|Mg\|_2\lesssim\|\De g\|_2+\sqrt{\tau}\|\De^2 g\|_2.
\end{align*}
\end{proof}



\subsection{$L^2$ error estimate of the 2D Cahn-Hilliard equation}
In this section, to simplify the notation, we will write $x\lesssim y$ if $x\leq C(\nu, u_0)\ y$ for a constant $C$ depending on $\nu$ and $u_0$. 
We consider the system
\begin{equation}\label{1st_contscheme}
\left\{
\begin{aligned}
&\frac{u^{n+1}-u^{n}}{\tau}+S\De^2(u^{n+1}-u^n)=\frac{e^{-\nu\De^2\tau}-1}{\tau}u^n+\frac{1-e^{-\nu\De^2\tau}}{\tau\nu\De}\Pi_N f(u^n)\\
&\partial_tu=-\nu\Delta^2 u+\De( f(u))\\
&u^0=\Pi_Nu_0\ ,\ u(0)=u_0\ .
\end{aligned}
\right.
\end{equation}
 In order to prove Theorem~\ref{1st_error}, it is clear that we shall estimate $G_n$ introduced in \eqref{1st_auxscheme} from the previous proposition. Note that for a one-variable function $h(t)$, one has the formula:

\begin{equation}\label{hform}
\left\{
\begin{aligned}
&\frac{1}{\tau}\int_{t_n}^{t_{n+1}}h(t)=h(t_n)+\frac{1}{\tau}\int_{t_n}^{t_{n+1}}h'(t)\cdot(t_{n+1}-t)\ dt\\
&\frac{1}{\tau}\int_{t_n}^{t_{n+1}}h(t)=h(t_{n+1})+\frac{1}{\tau}\int_{t_n}^{t_{n+1}}h'(t)\cdot(t_n-t)\ dt\ .
\end{aligned}
\right.
\end{equation}
Using the formula \eqref{hform} above and integrating the Cahn-Hilliard equation \eqref{CH} on the time interval $[t_n , t_{n+1}]$, we get
\begin{equation}\label{4.23}
\begin{aligned}
\frac{u(t_{n+1})-u(t_n)}{\tau}=
&-\nu\Delta^2 u(t_{n})-S\De^2\left( u(t_{n+1})-u(t_n)\right)\\&+\De\Pi_Nf(u)(t_n)+\De\Pi_{>N}f(u)(t_n)+\De Q_n\ ,
\end{aligned}
\end{equation}
where $\Pi_{>N}=id-\Pi_N$, the large mode truncation operator, and 
\begin{equation}
\begin{aligned}
Q_n=-\frac{\nu}{\tau}\int_{t_n}^{t_{n+1}} \partial_t\Delta u\cdot(t_{n+1}-t)\ dt+\frac{1}{\tau}\int_{t_n}^{t_{n+1}}\partial_t(f(u))(t_{n+1}-t)\ dt+S\int_{t_n}^{t_{n+1}}\partial_t\De u\ dt\ .
\end{aligned}
\end{equation}
We then rewrite \eqref{4.23} into the following form:
\begin{equation}\label{4.25}
\begin{aligned}
\frac{u(t_{n+1})-u(t_n)}{\tau}+&S\De^2\left( u(t_{n+1})-u(t_n)\right)=\frac{e^{-\nu\De^2\tau}-1}{\tau}u(t_n)+\frac{1-e^{-\nu\De^2\tau}}{\tau\nu\De}\Pi_N f(u(t_n))\\
&+\left(\frac{1-e^{-\nu\De^2\tau}}{\tau}-\nu\Delta^2\right) u(t_{n})
+\left(\De + \frac{e^{-\nu\De^2\tau}-1}{\nu\tau\De}\right)\Pi_N f(u(t_n))
\\&+\De\Pi_{>N}f(u)(t_n)+\De Q_n\ ,
\end{aligned}
\end{equation}
For the sake of simplicity, we denote $H_1, H_2, H_3$ to be the following quantities:
\begin{equation}
\begin{aligned}
    \begin{cases}
        &H_1\coloneqq \left(\frac{1-e^{-\nu\De^2\tau}}{\tau\De}-\nu\Delta \right) u(t_{n})=\frac{1-\nu\De^2\tau-e^{-\nu\De^2\tau}}{\tau\De} u(t_n),\\
       & H_2\coloneqq \left(1+\frac{e^{-\nu\De^2\tau}-1}{\nu\tau\De^2}\right)\Pi_N f(u)(t_n),\\
       &H_3\coloneqq \Pi_{>N}f(u)(t_n).
    \end{cases}
\end{aligned}
\end{equation}
In order to prove Theorem~\ref{1st_error}, we now estimate $\|L^2 G_n\|_2$ by estimating $\|L^2Q_n\|_2$ and $\|L^2 H_i\|_2$, $i=1,2,3$. Recall that $L^2u=\frac{\nu\De^2\tau}{1-e^{-\nu\De^2\tau}}u$.
\begin{proof}[Proof of Theorem~\ref{1st_error}]
By Theorem~\ref{Thm_1stab} and Lemma \ref{H^kregularity_CH}, $\sup_{n\geq 0}\| u^n\|_{H^1}\lesssim 1$ and $\sup_{t>0}\| u(t)\|_{H^s}\lesssim 1$ for $s\ge 10$ given in the statement. Thus the assumptions of Proposition \ref{prop_aux} (auxiliary $L^2$ error estimate proposition) are satisfied for \eqref{1st_contscheme} and \eqref{4.25}. Recall that
\begin{equation*}
\begin{aligned}
Q_n=-\frac{\nu}{\tau}\int_{t_n}^{t_{n+1}} \partial_t\Delta u\cdot(t_{n+1}-t)\ dt+\frac{1}{\tau}\int_{t_n}^{t_{n+1}}\partial_t(f(u))(t_{n+1}-t)\ dt+S\int_{t_n}^{t_{n+1}}\partial_t\De u\ dt\ .
\end{aligned}
\end{equation*}
Recalling \eqref{4.16} that we can control $\|L^2Q_n\|_2\lesssim \|Q_n\|_2+\sqrt{\tau}\|\De Q_n\|_2+\tau\|\De^2Q_n\|_2$ as follows:
\begin{equation}\label{eq:G_nest}
\begin{aligned}
\|Q_n\|_2&\lesssim \int_{t_n}^{t_{n+1}} \| \partial_t\Delta u\|_2\ dt+\int_{t_n}^{t_{n+1}}\| \partial_t(f(u))\|_2\ dt+S\int_{t_n}^{t_{n+1}}\| \partial_t\De u\|_2\ dt\\
&\lesssim(1+S)\int_{t_n}^{t_{n+1}} \| \partial_t\Delta u\|_2\ dt+\int_{t_n}^{t_{n+1}}\| \partial_t  u\|_2\ dt\cdot \| f'(u)\|_{L^\infty_tL^\infty_x}.
\end{aligned}
\end{equation}
Note that by Lemma \ref{H^kregularity_CH}, 
\begin{equation*}
\| \partial_t u\|_2+\|\pa_t\De u\|_2+ \| f'(u)\|_\infty\lesssim 1\ .
\end{equation*}
Therefore we can estimate \eqref{eq:G_nest} as following
\begin{equation*}
\begin{aligned}
\int_{t_n}^{t_{n+1}} \| \partial_t\Delta u\|_2\ dt\lesssim\left(\int_{t_n}^{t_{n+1}} \| \partial_t\Delta u\|_2^2\ dt\right)^{\frac12}\cdot\sqrt{\tau}\ .
\end{aligned}
\end{equation*}
Therefore when $t_m\geq 1$, it is not hard to see that
\begin{equation}
\begin{aligned}
\sum_{n=0}^{m-1}\| Q_n\|_2^2
&\lesssim\sum_{n=0}^{m-1}\left(\tau(1+S)^2\int_{t_n}^{t_{n+1}}\| \partial_t\Delta u\|_2^2\ dt+\tau\int_{t_n}^{t_{n+1}}\| \partial_t  u\|_2^2\ dt\right)\\
&\lesssim\tau(1+S)^2\int_0^{t_m}\| \partial_t\Delta u\|_2^2\ dt+\tau\int_0^{t_m} \| \partial_t  u\|_2^2\ dt\\
&\lesssim \tau(1+S)^2(1+t_m)+\tau\\
&\lesssim(1+S)^2\tau\cdot(1+t_m)\ .
\end{aligned}
\end{equation}
For the rest terms $\sqrt{\tau}\|\De Q_n\|_2 $ and $\tau \|\De^2 Q_n\|_2$ we will obtain very similar results as long as $s\ge 10$ and thereby we can conclude that:
\begin{align}\label{LG}
   \sum_{n=0}^{m-1} \|L^2Q_n\|_2^2\lesssim (1+S)^2\tau\cdot (1+t_m).
\end{align}
We now focus on the $\|L^2H_i\|_2$ terms. Firstly it is easy to see from the Fourier side that
\begin{equation}\label{H1}
\begin{aligned}
    \|L^2H_1 \|_2^2=&\frac{1}{(2\pi)^2}\sum_{|k|\ge 1}\frac{\nu^2|k|^4(1-\nu|k|^4\tau-e^{-\nu|k|^4\tau})^2}{(1-e^{-\nu|k|^4\tau})^2}|\widehat{u}(k)|^2(t_n)\\
    \lesssim&\sum_{\nu|k|^4\tau\le c }+\sum_{\nu|k|^4\tau\ge c}\frac{|k|^4(1-\nu|k|^4\tau-e^{-\nu|k|^4\tau})^2}{(1-e^{-\nu|k|^4\tau})^2}|\widehat{u}(k)|^2(t_n)\\
    \coloneqq& J_1+J_2.
\end{aligned}
\end{equation}
For the part where $\nu|k|^4\tau\ge c$, we indeed have
\begin{equation}\label{J1}
\begin{aligned}
 J_2\coloneqq&\sum_{\nu|k|^4\tau\ge c}\frac{|k|^4(1-\nu|k|^4\tau-e^{-\nu|k|^4\tau})^2}{(1-e^{-\nu|k|^4\tau})^2}|\widehat{u}(k)|^2(t_n)\\
  \lesssim&\sum_{\nu|k|^4\tau\ge c}|k|^{12}\tau^2|\widehat{u}(k)|^2(t_n)\\
  \lesssim&\tau^2 \|\De^3 u\|^2_2(t_n).
\end{aligned}
\end{equation}
For the lower frequency part, we get
\begin{align*}
 J_1\coloneqq &\sum_{\nu|k|^4\tau\le c}\frac{|k|^4(1-\nu|k|^4\tau-e^{-\nu|k|^4\tau})^2}{(1-e^{-\nu|k|^4\tau})^2}|\widehat{u}(k)|^2(t_n)\\
\lesssim&\sum_{1\le|k|\lesssim(\frac{1}{\tau})^{\frac14}}\frac{|k|^8(1-\nu|k|^4\tau-e^{-\nu|k|^4\tau})^2}{(1-e^{-\nu|k|^4\tau})^2}|\widehat{u}(k)|^2(t_n).
\end{align*}
Recall that $\phi(x)=\frac{x}{1-e^{-x}}$ is monotone increasing for $x>0$ as in Lemma~\ref{lem4.2}. Moreover, we have (by a simple observation) that
\begin{align}\label{4.34}
    0<\nu|k|^4\tau+e^{-\nu|k|^4\tau}-1<\nu^2|k|^8\tau^2.
\end{align}
Therefore we have
\begin{equation}\label{J2}
\begin{aligned}
    J_1\lesssim&\frac{1}{\tau^2}\sum_{1\le|k|\lesssim(\frac{1}{\tau})^{\frac14}}(\nu^2|k|^8\tau^2)^2|\widehat{u}(k)|^2(t_n)\lesssim\tau^2\sum_{|k|\ge 1}|k|^{16}|\widehat{u}(k)|^2(t_n)\lesssim\tau^2\|\De^4u\|_2^2(t_n).
\end{aligned}
\end{equation}
Combing \eqref{J1}-\eqref{J2}, we get
\begin{align}\label{LH1}
  \sum_{n=0}^{m-1}  \|L^2H_1\|_2^2\lesssim \sum_{n=0}^{m-1}\tau^2(\|\De^3u\|_2^2(t_n)+\|\De^4u\|_2^2(t_n))\lesssim m\tau^2\lesssim (t_m+1)\cdot \tau,
\end{align}
where the uniform Sobolev bound lemma (Lemma \ref{H^kregularity_CH}) is applied.

Secondly we estimate $\|L^2H_2\|_2^2$. Note that again by considering the Fourier expansion we arrive at:
\begin{equation}
    \begin{aligned}
        \|L^2H_2\|_2^2=&\frac{1}{(2\pi)^2}\sum_{1\le|k|\le N }\frac{(\nu\tau|k|^4+e^{-\nu|k|^4\tau}-1)^2 }{(1-e^{-\nu\tau|k|^4})^2}|\widehat{f(u)}(k)|^2(t_n)\\
        \lesssim&\sum_{\nu|k|^4\tau\le c}+\sum_{\nu|k|^4\tau\ge c}\frac{(\nu\tau|k|^4+e^{-\nu|k|^4\tau}-1)^2 }{(1-e^{-\nu\tau|k|^4})^2}|\widehat{f(u)}(k)|^2(t_n)\\
        \coloneqq&J_3+J_4.
    \end{aligned}
\end{equation}
Similar to \eqref{H1} above, we have
\begin{equation}
    \begin{aligned}
        J_4\coloneqq&\sum_{\nu|k|^4\tau\ge c}\frac{(\nu\tau|k|^4+e^{-\nu|k|^4\tau}-1)^2 }{(1-e^{-\nu\tau|k|^4})^2}|\widehat{f(u)}(k)|^2(t_n)\\
\lesssim&\sum_{|k|\ge1}|k|^8\tau^2|\widehat{f(u)}(k)|^2(t_n)\\
        \lesssim&\tau^2\|\De^2f(u)\|_2^2(t_n)\lesssim \tau^2.
    \end{aligned}
\end{equation}
Note that here by Lemma~\ref{H^kregularity_CH} we have
$\sup_{t\geq 0}\| u\|_{H^s}(t)\lesssim_s 1$, which leads to $\sup_{n\geq 0}\| f(u)\|_{H^s}(t_n)\lesssim_s 1$. For $J_3$, we have
\begin{equation}
    \begin{aligned}
        J_3\coloneqq&\sum_{\nu|k|^4\tau\le c}\frac{(\nu\tau|k|^4+e^{-\nu|k|^4\tau}-1)^2 }{(1-e^{-\nu\tau|k|^4})^2}|\widehat{f(u)}(k)|^2(t_n)\\
        \lesssim&\sum_{1\le|k|\lesssim(\frac{1}{\tau})^{\frac14}}\frac{|k|^8(\nu|k|^4\tau+e^{-\nu|k|^4\tau}-1)^2}{(1-e^{-\nu|k|^4\tau})^2}|\widehat{f(u)}(k)|^2(t_n)\\
        \lesssim&\frac{1}{\tau^2}\sum_{1\le|k|\lesssim(\frac{1}{\tau})^{\frac14}}\tau^4|k|^{16}|\widehat{f(u)}(k)|^2(t_n)\lesssim\tau^2\|\De^4f(u)\|_2^2(t_n),
    \end{aligned}
\end{equation}
where again we use the monotonicity of $\phi(x)=\frac{x}{1-e^{-x}}$ for $x>0$ and \eqref{4.34}. We can then derive that
\begin{align}\label{LH2}
    \sum_{n=0}^{m-1}\|L^2H_2\|_2^2\lesssim m\tau^2\lesssim(t_m+1)\cdot \tau.
\end{align}
Lastly let us focus on $\|L^2H_3\|_2^2$. Note that $\|L^2H_3\|_2^2\lesssim \|H_3\|_2^2+\tau\|\De H_3\|_2^2+\tau^2\|\De^2 H_3\|_2^2$, we then estimate $\|H_3\|_2^2$ below.
\begin{equation}\label{4.41}
\begin{aligned}
\|H_3\|_2^2= \| \Pi_{>N}f(u)\|_2^2(t_n)&=\sum_{|k|>N}\left|\widehat{f(u)}(k)\right|^2(t_n)\\
&\leq \sum_{|k|>N}|k|^{2s}\left|\widehat{f(u)}(k)\right|^2(t_n)\cdot{|k|^{-2s}}\\
&\lesssim N^{-2s}\cdot \| f(u)\|_{H^s}^2(t_n)\\
&\lesssim N^{-2s}\ .
\end{aligned}
\end{equation}
Similarly, we get $\tau\|\De H_3\|_2^2\lesssim \tau N^{-2s}$ and $\tau^2\|\De^2H_3\|_2^2\lesssim \tau^2 N^{-2s}$. As a result, assuming $\tau\lesssim 1$ or the time step is bounded (which is very natural) we get
\begin{equation}\label{LH3}
\sum_{n=0}^{m-1}\| \Pi_{>N}f(u(t_n))\|_2^2\lesssim_s m\cdot (1+\tau^2)N^{-2s}\lesssim \frac{t_mN^{-2s}}{\tau}\ .
\end{equation}
Therefore, combining \eqref{LG}, \eqref{LH1}, \eqref{LH2} and \eqref{LH3} we can conclude that
\begin{equation}\label{4.43}
\tau\sum_{n=0}^{m-1}\left(\|L^2 Q_n\|_2^2+\sum_{i=1,2,3}\| L^2 H_i\|_2^2\right)\lesssim_s(1+t_m)(\tau^2+N^{-2s})(1+S)^2\ .
\end{equation}
In order to apply Proposition~\ref{prop_aux}, it remains to control $\|Lu^0-Lu(0))\|_2$ and $\|Mu^0-Mu(0)\|_2$.
Note that by \eqref{4.16} in Lemma~\ref{lem4.2} and similar arguments in \eqref{4.41} we have
\begin{equation}\label{4.44}
    \begin{aligned}
        \|Lu^0-Lu(0)\|_2^2=\|L\Pi_Nu_0-Lu_0\|_2^2\lesssim \|\Pi_Nu_0-u_0\|_2^2+\tau\|\De(\Pi_Nu_0-u_0)\|_2^2\lesssim N^{-2s}+\tau N^{-2(s-2)}.
    \end{aligned}
\end{equation}
Similarly, we have
\begin{equation}\label{4.45}
\| Mu^0-Mu(0)\|_2^2=\| M\Pi_Nu_0-Mu_0\|_2^2\lesssim N^{-2(s-2)}+\tau N^{-2(s-4)}\ .
\end{equation}
We now apply the auxiliary solutions estimate in Proposition \ref{prop_aux}. Noting the estimates \eqref{4.43}, \eqref{4.44} and \eqref{4.45} we can get
\begin{equation*}
\| u^m-u(t_m)\|_2^2\lesssim_s(1+S)^2e^{Ct_m}\left(N^{-2s}+\tau^2\cdot N^{-2(s-4)}+\tau^2\right)\ .
\end{equation*}
Thus
\begin{equation}
\| u^m-u(t_m)\|_2\leq (1+S)\cdot C_2\cdot e^{C_1t_m}\left(N^{-s}+\tau+\tau\cdot N^{-s+4}\right)\ ,
\end{equation}
where $C_1>0$ is a constant depending on $\nu$ and $u_0$; $C_2>0$ is a constant depending on $s\ ,\nu$ and $u_0$. This completes the proof of $L^2$ error estimate.
\end{proof}

\section{Numerical experiments}
In this section, we present several numerical results. To begin with we present numerical evidence that show the necessity of the stabilizers.
\subsection{A benchmark computation with different $S$-values}
 In the first several experiments we vary the choice of the stabilizers, namely put $S=0,0.01,0.1$. Moreover for the rest of the parameters, we choose $\nu = 0.01$, $\tau= 0.1,~N_x=N_y = 256$ and the initial data $u_0$ is given basically ``supported'' in seven circles as below:
\begin{align}\label{5.1}
    u_0(x,y)=-1+\sum_{i=1}^7 f_0\left(\sqrt{(x-x_i)^2+(y-y_i)^2}-r_i\right),
\end{align}
where
\begin{align*}
    f_0(s)=\begin{cases}
        &2e^{-\frac{\nu}{s^2}},\quad \mbox{if}\ s<0;\\
        &0, \quad \mbox{otherwise}. 
    \end{cases}
\end{align*}
The centers and radii of the chosen circles are given in the Table~\ref{table:1} below.
\begin{table}[h!]
\centering
\begin{tabular}{||c ||c c c c c c c||} 
 \hline
 $x_i$ & $-\frac{\pi}{2}$ & $-\frac{3\pi}{4}$ &$-\frac{\pi}{2}$ &0& $\frac{\pi}{2}$&0&$\frac{\pi}{2}$ \\ [0.5ex]
 $y_i$ &$-\frac{\pi}{2}$& $-\frac{\pi}{4}$ & $\frac{\pi}{4}$&$-\frac{3\pi}{4}$ & $-\frac{3\pi}{4}$& 0 &$\frac{\pi}{2}$\\[0.5ex]
 $r_i$ & $\frac{\pi}{5}$ &$\frac{2\pi}{15}$ & $\frac{2\pi}{15}$& $\frac{\pi}{10}$&$\frac{\pi}{10}$&$\frac{\pi}{4}$&$\frac{\pi}{4}$ \\[0.5ex]
 \hline 
\end{tabular}
\caption{Choice of centers and radii}
\label{table:1}
\end{table}

The cases $S=0$ (left) and $S=0.01$ (right) can be found in Fig~\ref{fig1} and the case $S=0.1$ can be found in the left of Fig~\ref{fig3}. As you can tell, the choice of $S=0.1$ can guarantee the energy dissipation already. Recall the discussion in Theorem~\ref{Thm_1stab} and Remark~\ref{rem1.6}, we see that the optimal size of $S$ could be much smaller than $\nu^{-1}\log \nu$ that we will not dig further in this direction.

\begin{figure}[htb]

\begin{minipage}[t]{0.48\textwidth}
\centering
\includegraphics[width=0.45\textwidth]{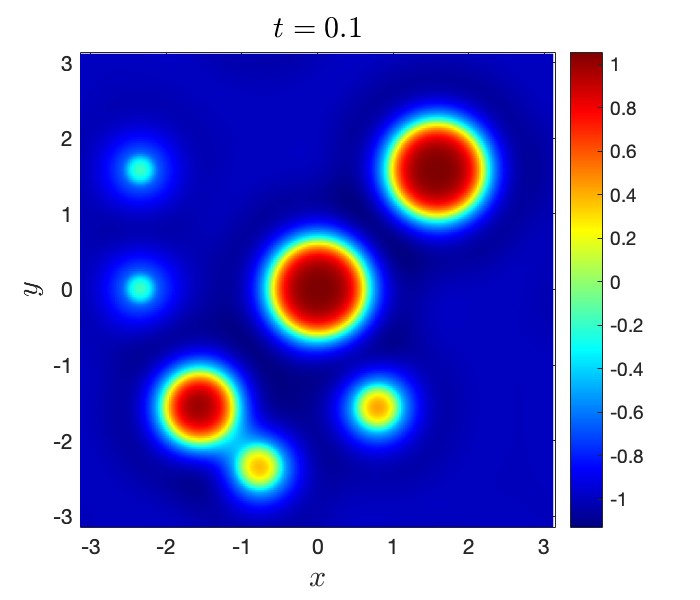}
\includegraphics[width=0.45\textwidth]{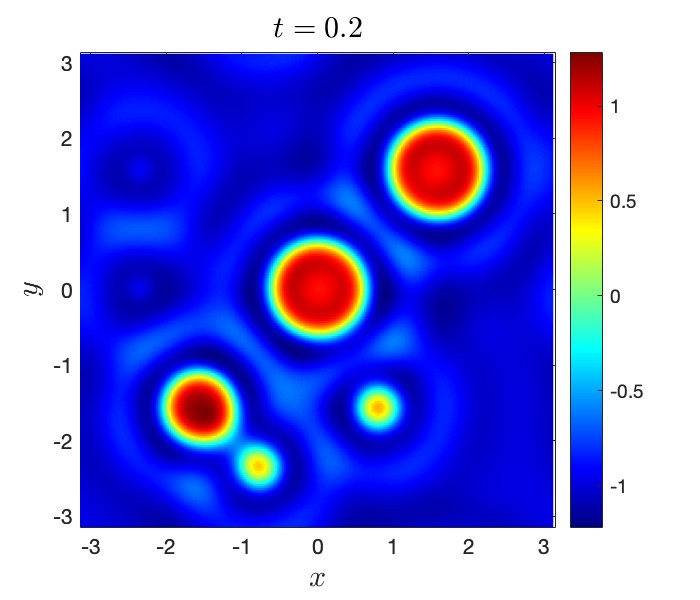}\\
\includegraphics[width=0.45\textwidth]{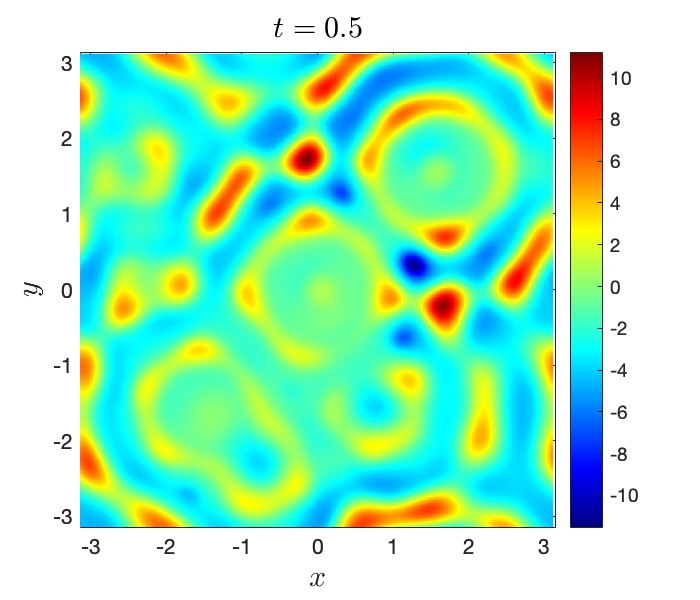}
\includegraphics[width=0.45\textwidth]{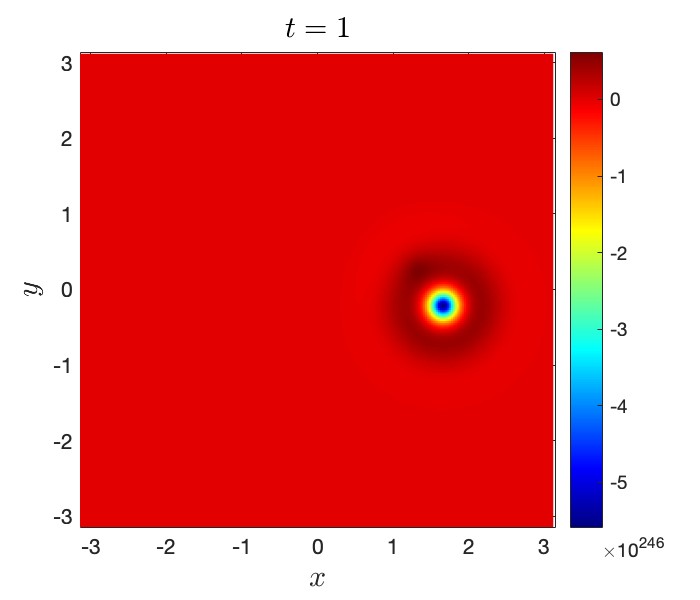}
\includegraphics[width=0.60\textwidth]{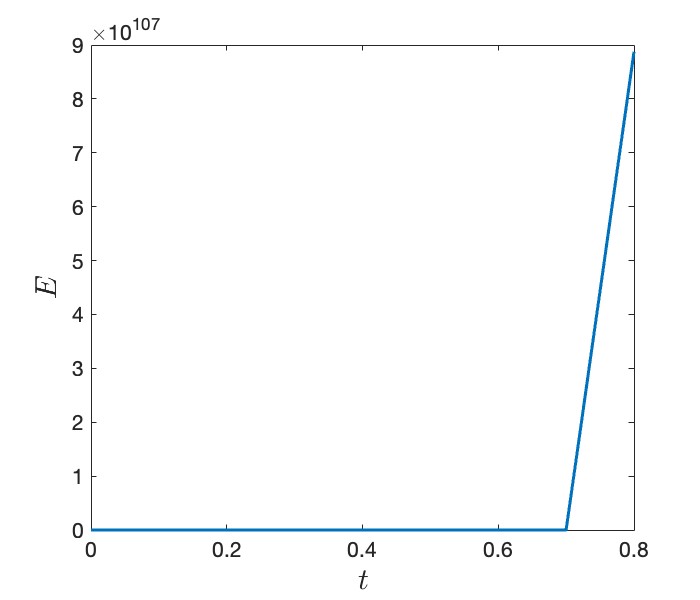}
\end{minipage}
\hfill
\begin{minipage}[t]{0.48\textwidth}
\centering
\includegraphics[width=0.45\textwidth]{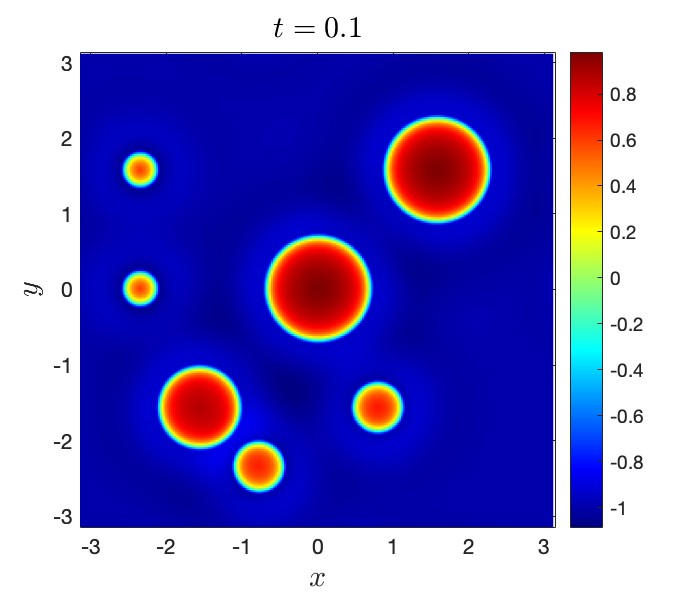}
\includegraphics[width=0.45\textwidth]{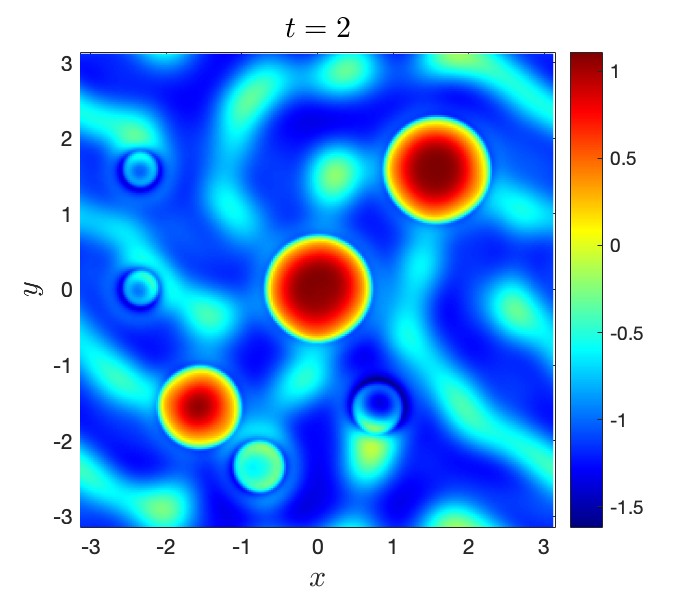}\\
\includegraphics[width=0.45\textwidth]{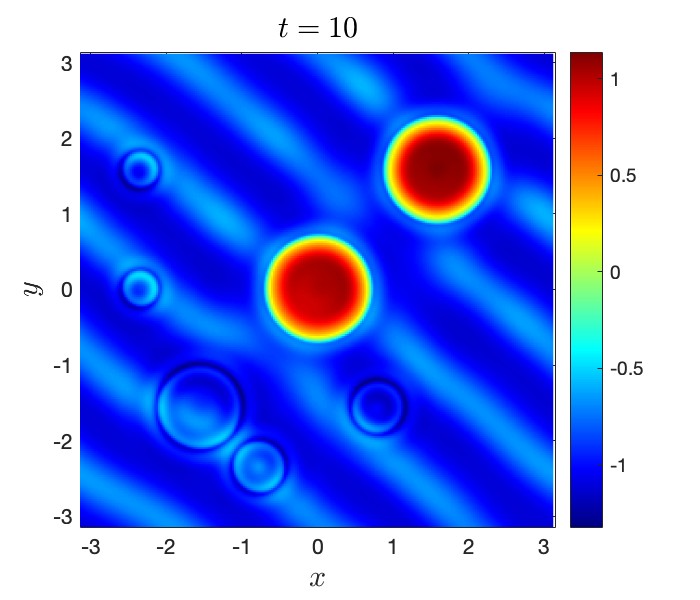}
\includegraphics[width=0.45\textwidth]{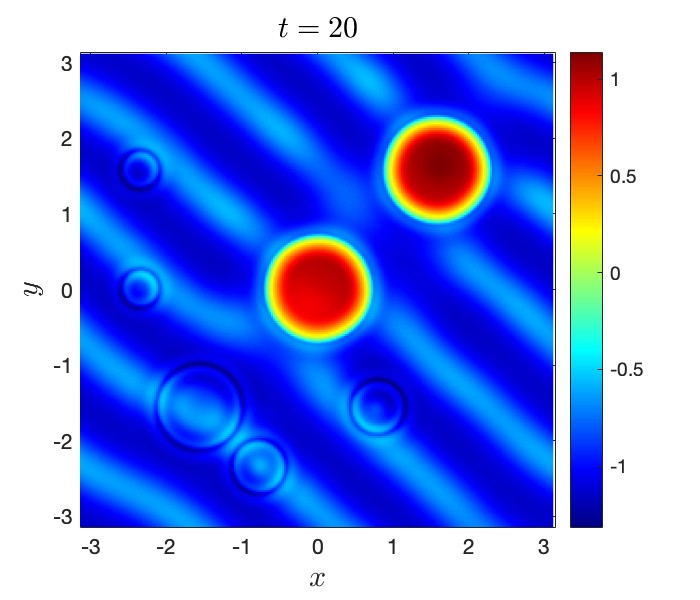}
\includegraphics[width=0.60\textwidth]{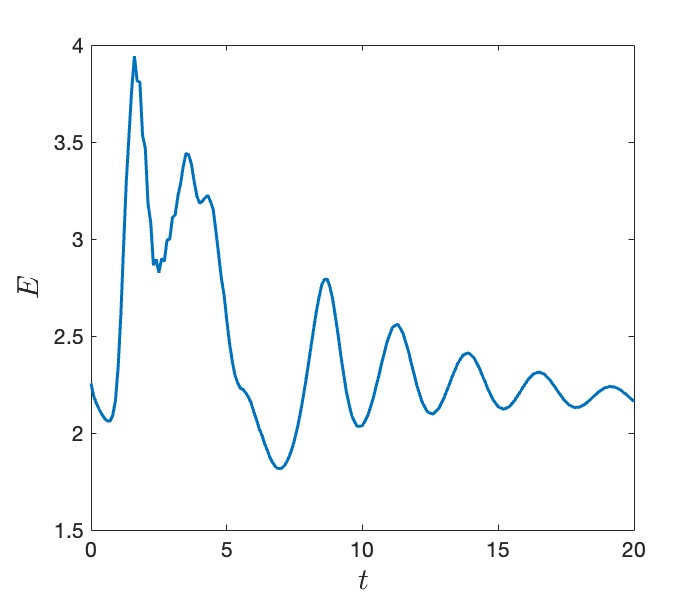}
\end{minipage}

\caption{\small Dynamics of 2D Cahn-Hilliard equation using EI scheme \eqref{1stScheme} where $\nu = 0.01$, $\tau= 0.1,~N_x=N_y = 256$ and the initial data $u_0$ is given in \eqref{5.1}. $S=0$ (Left), $S=0.01$ (right).}\label{fig1}
\end{figure}

\begin{figure}[!h]
\begin{minipage}[t]{0.48\textwidth}
\centering
\includegraphics[width=0.45\textwidth]{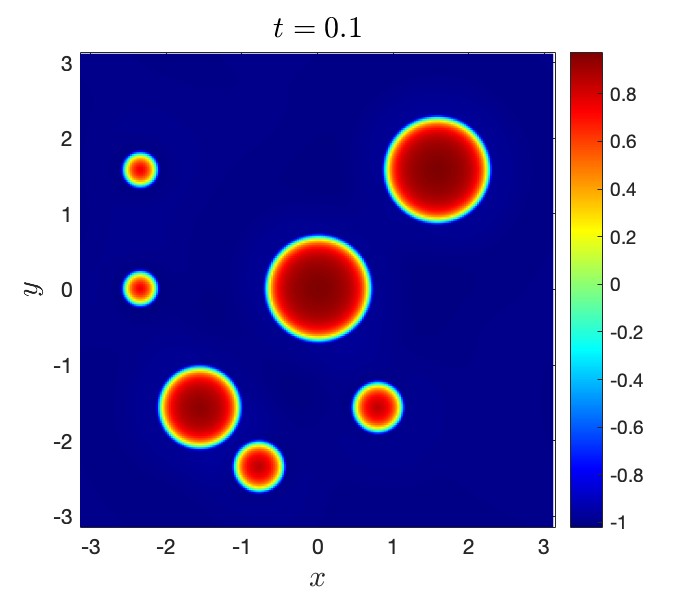}
\includegraphics[width=0.45\textwidth]{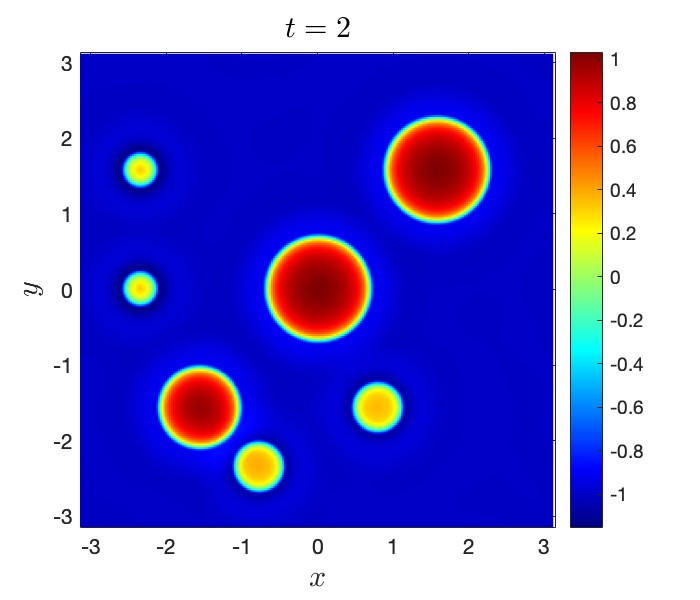}\\
\includegraphics[width=0.45\textwidth]{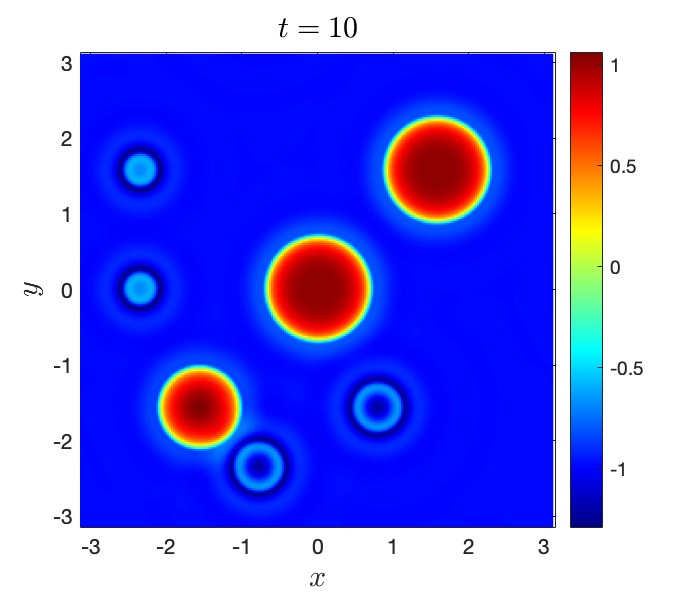}
\includegraphics[width=0.45\textwidth]{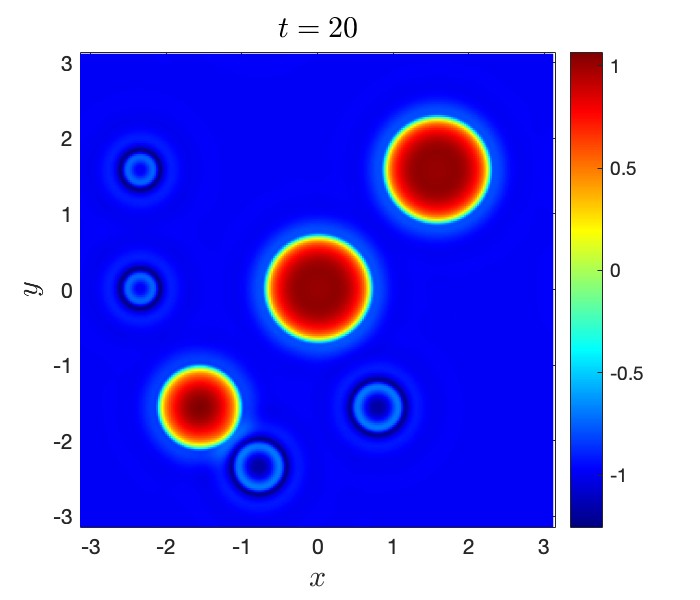}
\includegraphics[width=0.60\textwidth]{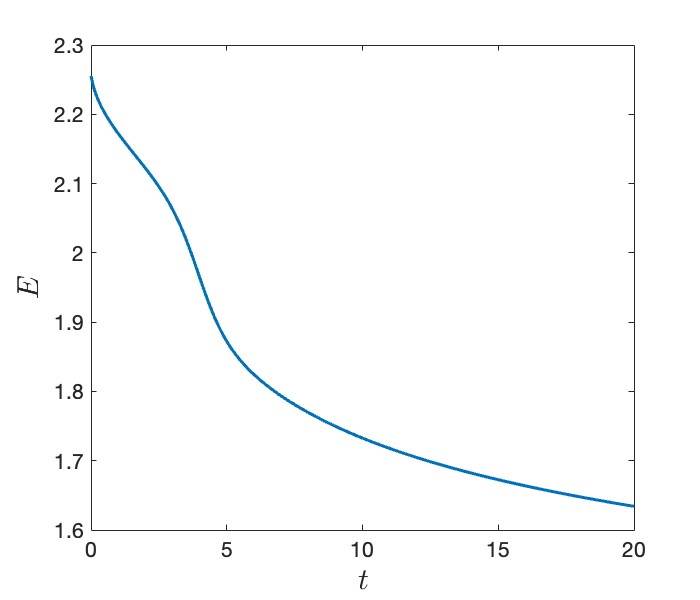}
\caption{\small Dynamics of 2D Cahn-Hilliard equation using EI scheme \eqref{1stScheme} where $\nu = 0.01$, $S=0.1$, $\tau= 0.1,~N_x=N_y = 256$ and the initial data $u_0$ is given in \eqref{5.1}. }\label{fig3}
\end{minipage}
\hfill
\begin{minipage}[t]{0.48\textwidth}
\centering
\includegraphics[width=0.45\textwidth]{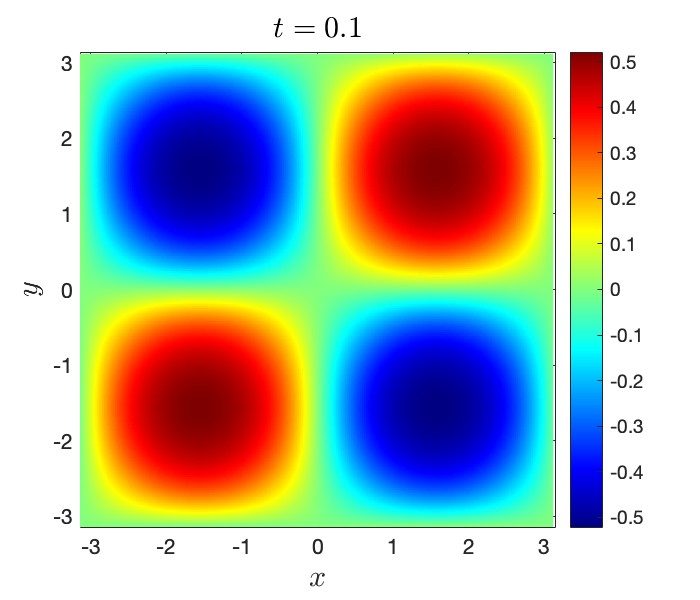}
\includegraphics[width=0.45\textwidth]{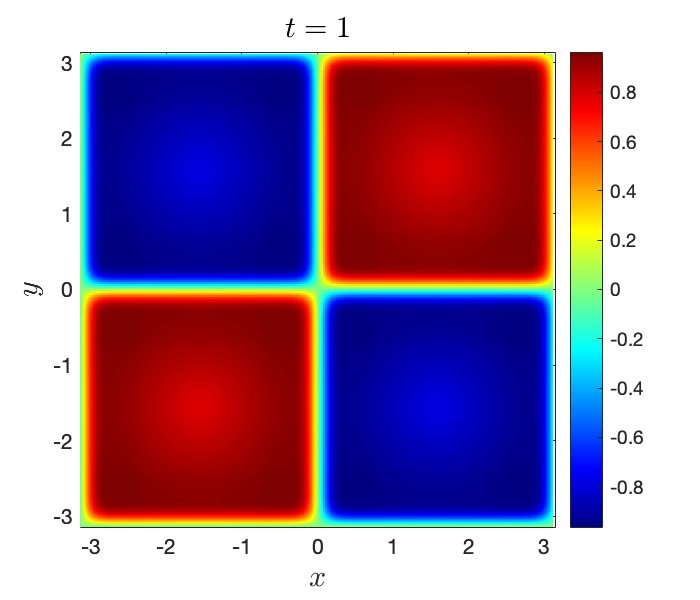}\\
\includegraphics[width=0.45\textwidth]{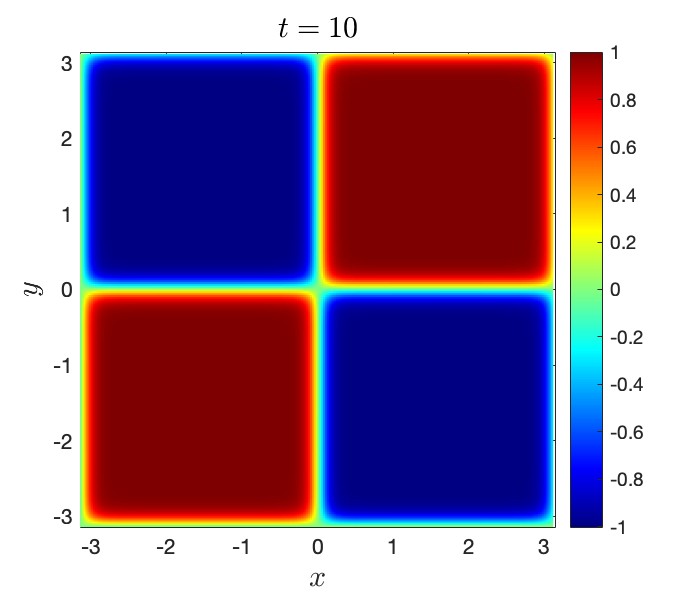}
\includegraphics[width=0.45\textwidth]{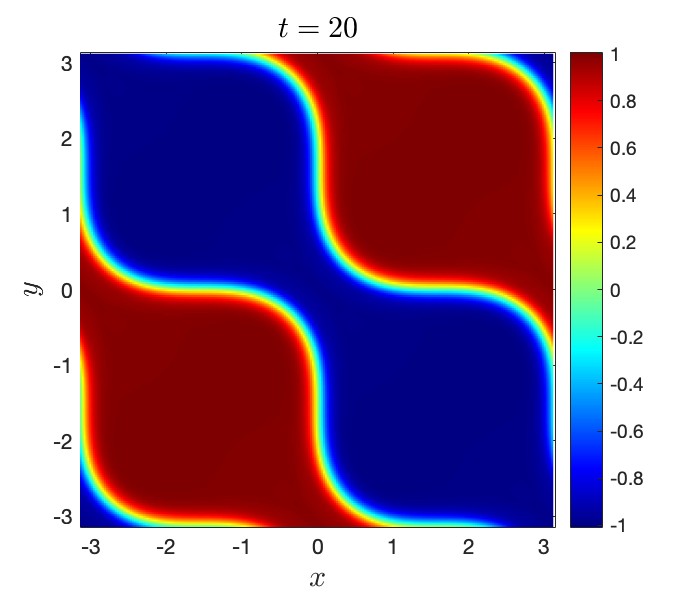}
\includegraphics[width=0.60\textwidth]{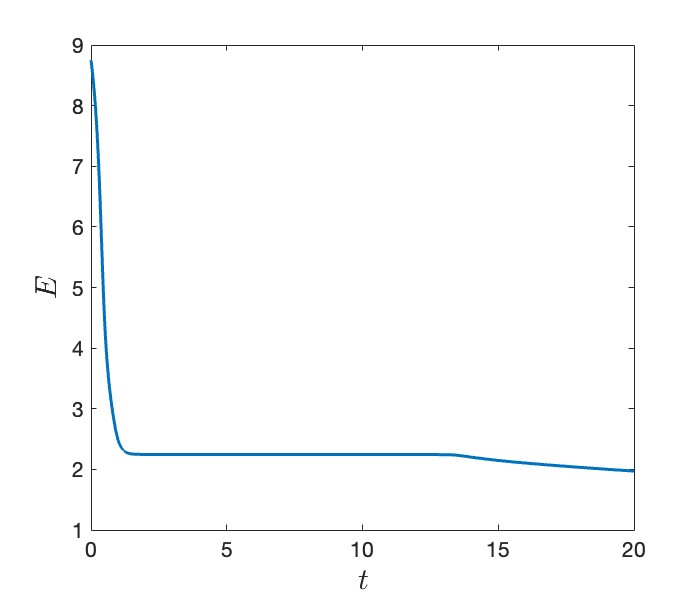}
\caption{\small Dynamics of 2D Cahn-Hilliard equation using EI scheme \eqref{1stScheme} where $\nu = 0.01$, $S=0.1$, $\tau= 0.1,~N_x=N_y = 256$ and the initial data $u_0=0.5\sin(x)\sin(y)$ . }\label{fig4}
\end{minipage}
\end{figure}

\subsection{Convergence test}
In this subsection we consider a benchmark computation test with initial data being $u_0=0.5*\sin(x)\sin(y)$. We take {$\nu=1$}, $S=0.1$ and $N_x=N_y= 256$. Then we consider the exact solution $u_e=0.5*e^{-t}\sin(x)\sin(y)$ corresponding to certain forcing term that can be computed explicitly.
 With these settings we perform our numerical experiments with various time steps $\tau = \frac{0.01}{2^k}$ with $k = 0, 1,..., 6$. The relative $L^2$-errors and $L^\infty$-errors at time $T =0.5$ are presented below in Table~\ref{table:2}.  As usual, the experimental order of convergence is computed by comparing the errors of two consecutive refinements. Indeed the rate of convergence indicates the order of the error is $O(\tau)$.

\cmtr{\begin{table}[h!]
\centering
\begin{tabular}{||c || c c ||c c||} 
 \hline
 $\tau=0.01$ & $L^2$-error & Rate & $L^\infty$-error& Rate \\ [0.5ex]
 \hline
 $\tau$ & 9.099e-05 & - & 1.156e-03 &- \\[0.5ex]
 
 $\tau/2$ & 4.523e-05 & 2.012 &5.745e-04& 2.012
 \\[0.5ex]
$\tau/4$ &  2.255e-05& 2.006 &2.864e-04&2.006
 \\[0.5ex]
 $\tau/8$ & 1.126e-05 &2.003  &1.430e-04&2.003
 \\[0.5ex]
 $\tau/16$ & 5.624e-06 & 2.002 & 7.143e-05&2.002
 \\[0.5ex]
 $\tau/32$ & 2.811e-06 & 2.001 &3.570e-05& 2.001\\
 \hline 
\end{tabular}
\caption{Errors and orders of convergence}
\label{table:2}
\end{table}}

\subsection{More dynamics and energy evolution}
In this section, we present more dynamics of our scheme~\ref{1stScheme} with patterns indicating the energy dissipation. In Fig~\ref{fig4} we present the dynamics of 2D Cahn-Hilliard equation using EI scheme \eqref{1stScheme} where $\nu = 0.01$, S=0.1, $\tau= 0.1,~N_x=N_y = 256$ and the initial data $u_0=0.5\sin(x)\sin(y)$. In Fig~\ref{fig5} we present the dynamics choosing $\nu=0.01$, $S=0.1$, $\tau=0.001,~N_x=N_y=256$ and the initial data $u_0=0.05\sin(x)\sin(y)$. In Fig~\ref{fig6} we present the dynamics where $\nu = 0.01$, $S=0.1$, $\tau= 0.01,~N_x=N_y = 256$. The values of the initial data $u_0$ are random between $-1$ and $1$  .

\begin{figure}[!h]
\begin{minipage}[t]{0.48\textwidth}
\centering
\includegraphics[width=0.45\textwidth]{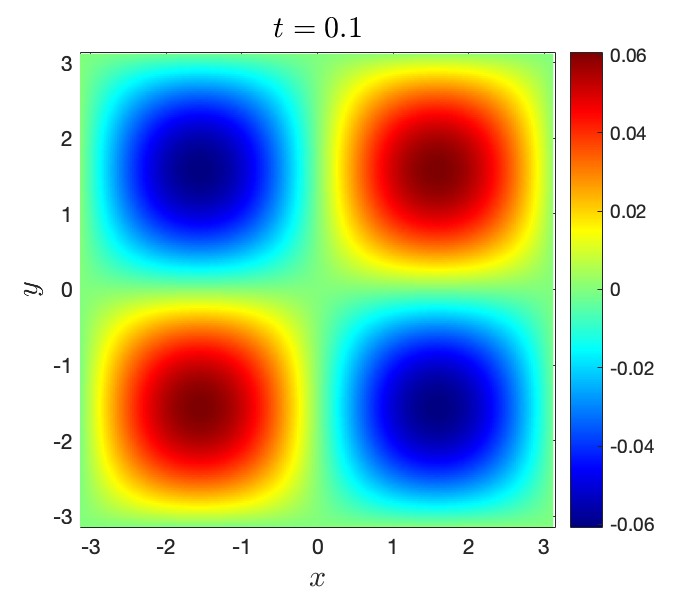}
\includegraphics[width=0.45\textwidth]{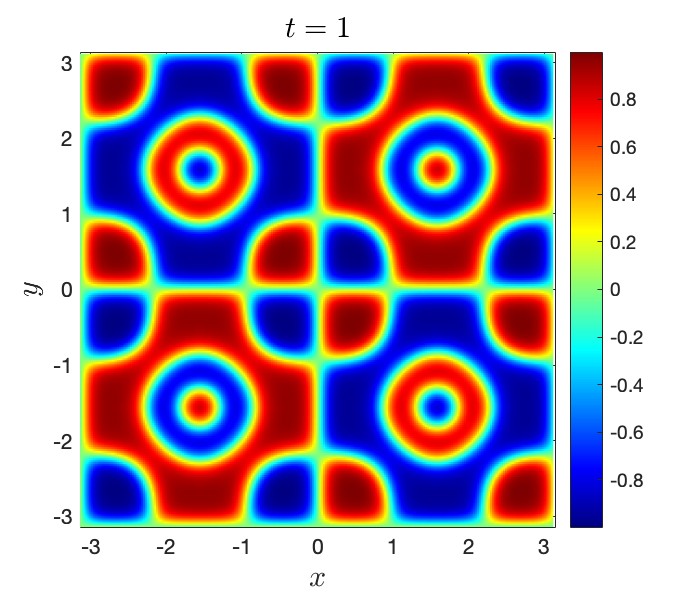}\\
\includegraphics[width=0.45\textwidth]{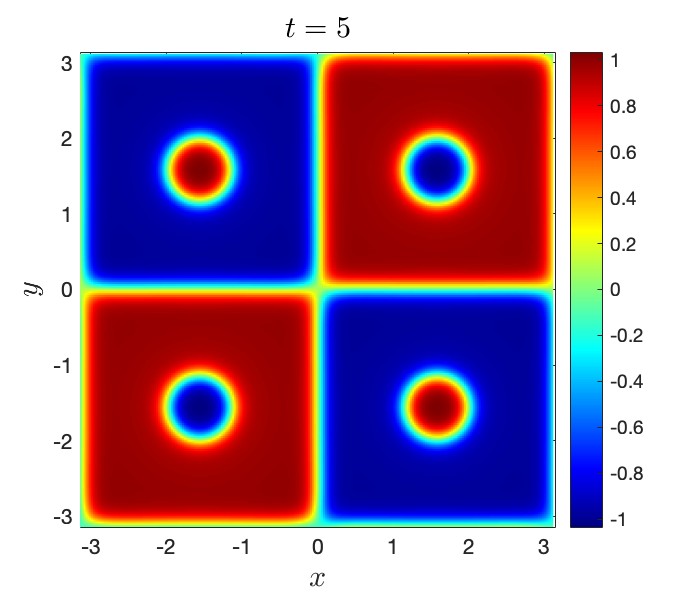}
\includegraphics[width=0.45\textwidth]{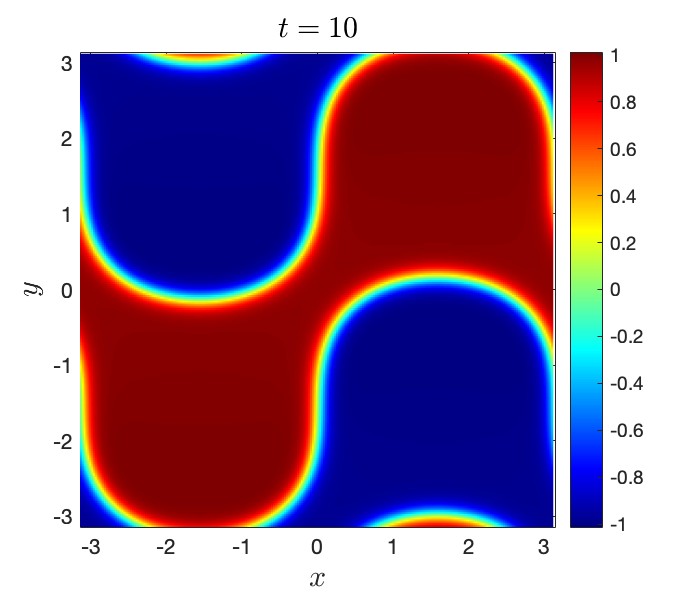}
\includegraphics[width=0.60\textwidth]{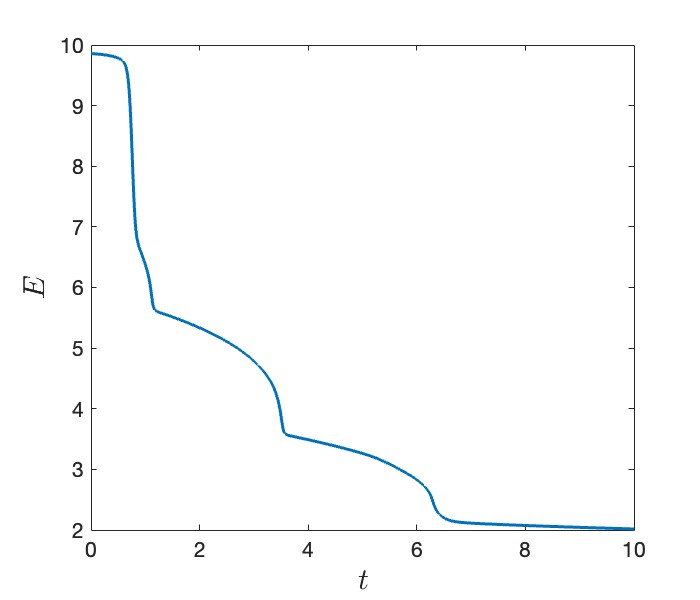}
\caption{\small Dynamics of 2D Cahn-Hilliard equation using EI scheme \eqref{1stScheme} where $\nu = 0.001$, $S=0.1$, $\tau= 0.1,~N_x=N_y = 256$ and the initial data $u_0=0.05\sin(x)\sin(y)$. }\label{fig5}
\end{minipage}
\hfill
\begin{minipage}[t]{0.48\textwidth}
\centering
\includegraphics[width=0.45\textwidth]{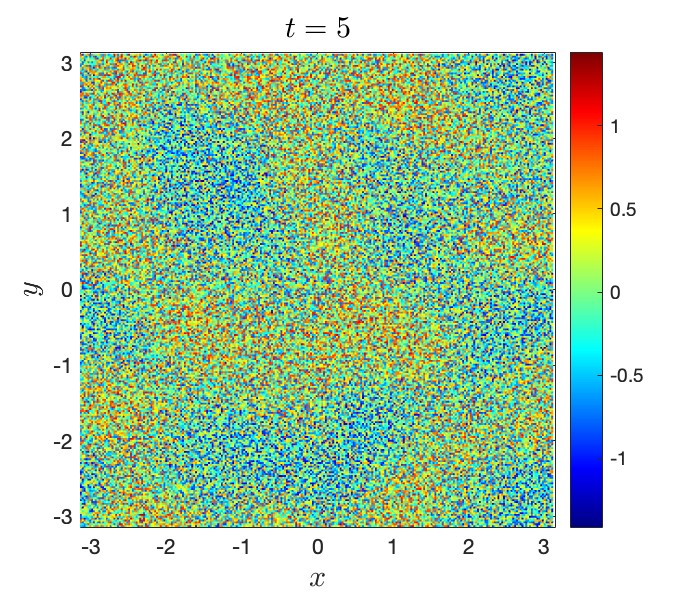}
\includegraphics[width=0.45\textwidth]{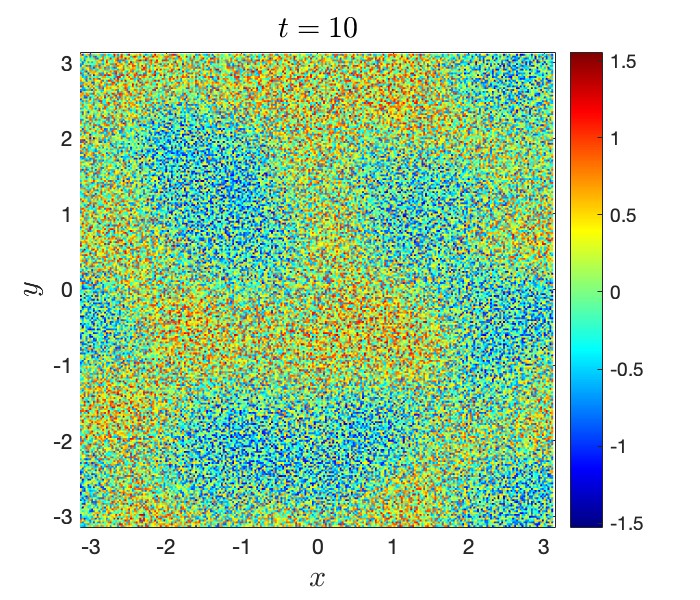}\\
\includegraphics[width=0.45\textwidth]{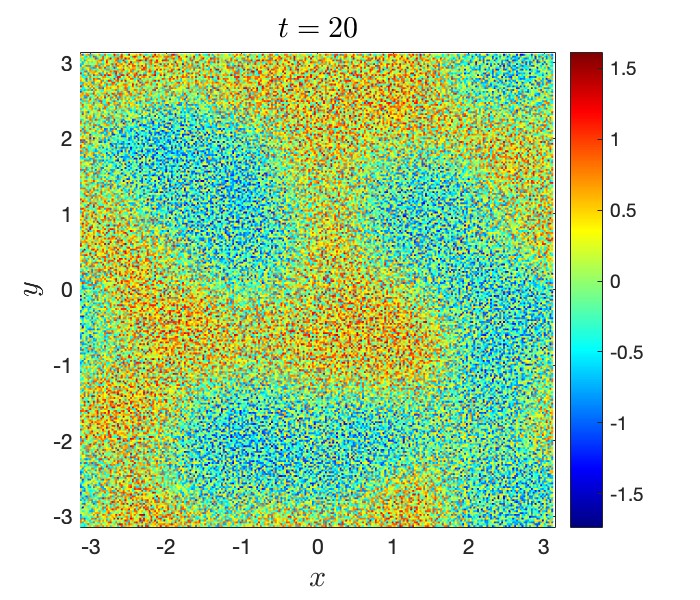}
\includegraphics[width=0.45\textwidth]{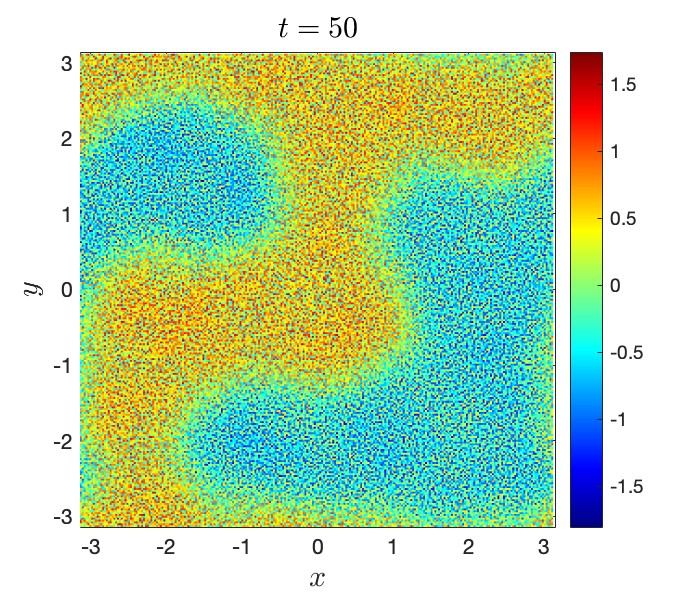}
\includegraphics[width=0.60\textwidth]{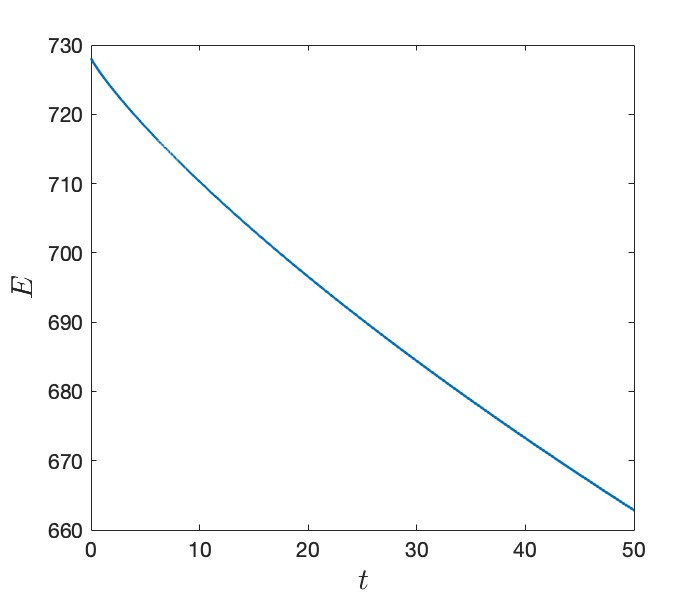}
\caption{\small Dynamics of 2D Cahn-Hilliard equation using EI scheme \eqref{1stScheme} where $\nu = 0.01$, $S=0.1$, $\tau= 0.01,~N_x=N_y = 256$. The values of the initial data $u_0$ are random between $-1$ and $1$. }\label{fig6}
\end{minipage}
\end{figure}

   

\section{Concluding remark}
To conclude, we give a systematic approach on applying EI-type schemes to models where maximum principle no longer exists by solving the Cahn-Hilliard equation with a first order EI scheme and showing the energy dissipation. In fact second order EI schemes will be handled similarly in a subsequent paper. In particular, we prove the energy dissipation of EI schemes for the Cahn-Hilliard equation without assuming any strong Lipschitz condition or $L^\infty$ boundedness. Furthermore, we also analyze the $L^2$ error and present some numerical simulations to demonstrate the dynamics. Indeed, the analysis framework can be applied to more general models and higher order schemes. We leave the discussion in a subsequent work.



\appendix

\section{Proof of Lemma~\ref{H^kregularity_CH}}

\begin{lemma}[$H^k$ boundedness of the exact CH solution]
Assume $u(x,t)$ is a smooth solution to the Cahn-Hilliard equation in $\mathbb{T}^2$ and the initial data $u_0\in H^k(\mathbb{T}^2)$ for $k\geq 2$. Then,
\begin{equation}
\sup_{t\geq 0}\| u(t)\|_{H^k(\mathbb{T}^2)}\lesssim_{k} 1
\end{equation}
where we omit the dependence on $\nu$ and $u_0$. 
\end{lemma}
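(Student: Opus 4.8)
The plan is to run a parabolic bootstrap that exploits the $\dot{H}^{-1}$ gradient-flow structure of \eqref{CH}, combined with the \emph{uniform} (parabolic-smoothing) Gr\"onwall lemma to turn local-in-time energy estimates into bounds uniform in $t\ge 0$. Throughout, $\lesssim$ absorbs constants depending on $\nu$, $u_0$ and $k$, and since the equation is in divergence form the mean $\frac{1}{|\T|}\int_\T u\,dx$ is conserved, so it suffices to bound the mean-zero part and we may freely use Poincar\'e's inequality and the homogeneous norms $\|\cdot\|_{\dot{H}^s}$. The first step is the only genuinely global input: testing \eqref{CH} against the chemical potential $\mu=-\nu\De u+f(u)$ gives the exact dissipation identity $\tfrac{d}{dt}E(u(t))=-\|\na\mu\|_2^2$, so, since $F\ge0$, we get $\sup_{t\ge0}E(u(t))\le E(u_0)\lesssim 1$, hence $\sup_{t\ge0}\|u(t)\|_{H^1(\T)}\lesssim 1$, and also $\int_0^\infty\|\na\mu(t)\|_2^2\,dt\le E(u_0)\lesssim 1$.

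Next I would convert the dissipation integral into a uniform supply of time-averaged higher norms. Writing $\na\mu=-\nu\na\De u+f'(u)\na u$ and using the $H^1$ bound, 2D Gagliardo--Nirenberg, the embedding $H^1(\T)\hookrightarrow L^p(\T)$ for all $p<\infty$, and Young's inequality to absorb the top-order factor, one gets the elliptic-type bound $\|u\|_{\dot{H}^3}\lesssim 1+\|\na\mu\|_2$; hence $\sup_{t\ge0}\int_t^{t+1}\|u(s)\|_{\dot{H}^j}^2\,ds\lesssim1$ for all $1\le j\le3$. Then, for each $2\le j\le k$, one tests \eqref{CH} against $(-\De)^j u$, obtaining $\tfrac12\tfrac{d}{dt}\|u\|_{\dot{H}^j}^2+\nu\|u\|_{\dot{H}^{j+2}}^2=(\De f(u),(-\De)^j u)$; expanding $\De f(u)=6u|\na u|^2+(3u^2-1)\De u$, estimating the right side by Gagliardo--Nirenberg interpolation using the inductively available bound $\sup_t\|u(t)\|_{H^{j-1}}\lesssim1$ — which for $j\ge3$ already supplies $\|u\|_\infty\lesssim\|u\|_{H^2}\lesssim1$ by Morrey's inequality — and absorbing the highest-order term into $\nu\|u\|_{\dot{H}^{j+2}}^2$, one reaches a differential inequality $\tfrac{d}{dt}\|u\|_{\dot{H}^j}^2+\tfrac{\nu}{2}\|u\|_{\dot{H}^{j+2}}^2\le C(1+\|u\|_{\dot{H}^j}^2)$. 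The base case $j=2$, done before any $L^\infty$ control is in hand, is the delicate one and uses only the $H^1$ bound together with 2D interpolation such as $\|\na u\|_{L^6}\lesssim\|\De u\|_2^{2/3}$ and $\|\De u\|_{L^4}\lesssim\|\De u\|_2^{3/4}\|\De^2u\|_2^{1/4}$.

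Finally I would close the induction with the uniform Gr\"onwall lemma: if $y'\le g\,y+h$ with $y,g,h\ge0$ and $\int_t^{t+1}g\le a_1$, $\int_t^{t+1}h\le a_2$, $\int_t^{t+1}y\le a_3$ for all $t\ge0$, then $y(t)\le(a_3+a_2)e^{a_1}$ for all $t\ge1$. Applying this successively for $j=2,3,\dots,k$: at stage $j$ the quantity $\int_t^{t+1}\|u\|_{\dot{H}^j}^2\,ds$ is uniformly bounded — by the previous step when $j\le3$, and for $j\ge4$ as the dissipation term $\tfrac{\nu}{2}\|u\|_{\dot{H}^j}^2$ produced and integrated at stage $j-2$ — so the differential inequality gives $\sup_{t\ge1}\|u(t)\|_{\dot{H}^j}^2\lesssim1$; on $[0,1]$ ordinary Gr\"onwall and $u_0\in H^k$ give $\sup_{0\le t\le1}\|u(t)\|_{\dot{H}^j}\lesssim1$; and integrating over unit intervals yields $\sup_t\int_t^{t+1}\|u\|_{\dot{H}^{j+2}}^2\lesssim1$ to feed the next stage. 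After finitely many iterations, combining with the conserved mean gives $\sup_{t\ge0}\|u(t)\|_{H^k(\T)}\lesssim_k1$. (An alternative is a Duhamel/semigroup argument using the smoothing of $e^{-\nu\De^2 t}$, but the energy method above is cleaner and self-contained given Step~1.)

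The main obstacle is precisely that all constants must be \emph{uniform in $t$}: a plain Gr\"onwall argument only yields bounds of size $e^{Ct}$. The remedy is the parabolic-smoothing mechanism packaged in the uniform Gr\"onwall lemma, whose hypotheses force the slightly fussy bookkeeping of which time-averaged norm is controlled at which level. The secondary difficulty is the borderline two-dimensional nonlinear estimate at the $H^2$ level, carried out before any $L^\infty$ bound exists, where one has to lean on $H^1\hookrightarrow L^p$ $(p<\infty)$, Gagliardo--Nirenberg, and careful absorption of the top-order term; all higher levels are then routine thanks to the $L^\infty$ bound that $H^2$-control supplies.
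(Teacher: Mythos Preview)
Your proposal is correct but takes a genuinely different route from the paper. The paper argues via the Duhamel/mild-solution formula: starting from the $H^1$ energy bound, it writes $u(t)=e^{-\nu t\De^2}u_0+\int_0^t e^{-\nu(t-s)\De^2}\De(u^3-u)\,ds$, applies $D^2$, and controls the nonlinear integral by splitting into $|t-s|<1$ (handled by local theory) and $|t-s|>1$, where the biharmonic heat kernel decays exponentially, $\|K_1\|_2\lesssim \gamma^{-1/2}e^{-\nu\gamma}$, so the time integral is uniformly bounded; the induction to higher $H^k$ is then declared routine. You instead exploit the gradient-flow dissipation $\int_0^\infty\|\na\mu\|_2^2\,dt<\infty$ to produce uniform time-averages of $\|u\|_{\dot H^3}$, run direct energy estimates at each level, and close with the uniform Gr\"onwall lemma. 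The semigroup argument is short once the kernel bounds are in hand; your energy method is more self-contained (no kernel analysis), makes the parabolic-smoothing mechanism explicit, and transfers more easily to settings without explicit semigroup estimates. One small caveat: the clean form $y'\le C(1+y)$ you write at the $j=2$ level is slightly optimistic --- the honest estimate for the term $(u|\na u|^2,\De^2 u)$ yields something like $\|\De u\|_2^{2/3}\,y$ rather than $Cy$ --- but since $\int_t^{t+1}\|\De u\|_2^{2/3}\,ds\lesssim 1$ by H\"older and your Step~2, the uniform-Gr\"onwall hypotheses are still met and the argument goes through unchanged.
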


\begin{proof}The proof is standard and we only sketch the details. To start with we can write the solution $u$ in the mild form
\begin{equation*}
u(t)=e^{-\nu t\Delta^2}u_0+\int_0^te^{-\nu(t-s)\Delta^2}\De(u^3-u)\ ds\ .
\end{equation*}
Note that by the energy dissipation we indeed have $\|u\|_{H^1(\T)}\lesssim 1$ for any $t>0$. Therefore it suffices to this argument inductively, namely we show $\| u\|_{H^2}\lesssim 1 $ for any $t\geq 0$. 
Then by taking any second order spatial derivative and $L^2$ norm in the formula above, we derive
\begin{equation*}
\begin{aligned}
\| D^2u\|_2\leq \| D^2e^{-\nu t\Delta^2}u_0\|_2+\int_0^t\| D^2e^{-\nu(t-s)\Delta^2}\De(u^3-u)\|_2\ ds, 
\end{aligned}
\end{equation*}
where $D^m u$ denotes any differential operator $D^\alpha u$ for any $|\alpha|=m$.

Firstly, we consider the nonlinear part. We indeed observe that
\begin{equation*}
\begin{aligned}
\| D^2e^{-\nu(t-s)\Delta^2}\De(u^3-u)\|_2\lesssim
\|K_1*(u^3-u)\|_{2},
\end{aligned}
\end{equation*}
where $K_1$ is the kernel corresponding to $\De D^2e^{-\nu(t-s)\Delta^2}$. (It is easy to see that here $D$ can interchange with $\De$.) Let $\ga=t-s$, it then suffices to estimate the following quantity:
\begin{align}
    \int_0^t\|K_1*(u^3-u)\|_2\ d\ga= \int_0^1\|K_1*(u^3-u)\|_2\ d\ga+\int_1^t \|K_1*(u^3-u)\|_2\ d\ga.
\end{align}
Here we assume $t>1$ with no loss since the case $t<1$ follows easily as a special case. For the region when $\ga\ge 1$ we get
\begin{equation*}
\begin{aligned}
\|K_1*(u^3-u)\|_2&\lesssim\|K_1*(u^3-u)\|_{\infty}\\
&\lesssim\| K_1\|_2\cdot\| u^3-u\|_2\\
&\lesssim\| K_1\|_2\cdot\| u\|_6^3\\
&\lesssim\|K_1\|_2,
\end{aligned}
\end{equation*}
by the standard Sobolev embedding $\|u\|_6\lesssim \|u\|_{H^1}\lesssim1$.
Note that for $\ga>1$ we have
\begin{equation}
\begin{aligned}
\| K_1\|_2&\lesssim\left(\sum_{|k|\geq 1}|k|^8 e^{-2\nu\ga|k|^4}\right)^{\frac12}\\
&\lesssim\left(\int_1^\infty e^{-2\nu\ga r^4}r^{9}\ dr\right)^{\frac12}\\
&\lesssim \ga^{-\frac12}e^{-\nu\ga}.
\end{aligned}
\end{equation}
As a result, we have
\begin{align}
    \int_1^t \|K_1*(u^3-u)\|_2\ d\ga\lesssim \int_1^t \ga^{-\frac12}e^{-\nu\ga}\ d\ga\lesssim 1.
\end{align}
The arguments above indeed indicate a smoothing effect. The region $0<\ga<1$ follows from the standard local theory.
This shows $\int_0^t\| De^{\nu (t-s)\Delta}u\|_2\ ds\lesssim 1$; the linear part follows from similar arguments. As a remark our proof works at least for dimension $d=1,2,3$.

\end{proof}

\section*{Acknowledgement}
X. Cheng was partially supported by the Shanghai ``Super Postdoc" Incentive Plan (No. 2021014), the International
Postdoctoral Exchange Fellowship Program (No. YJ20220071) and China Postdoctoral Science Foundation (Grant No. 2022M710796, 2022T150139). 

\bibliographystyle{abbrv}

\end{document}